\DeclareRobustCommand\widecheck[1]{
	{\mathpalette\@widecheck{#1}}}
\def\@widecheck#1#2{
		\setbox\z@\hbox{\m@th$#1#2$}
	\setbox\tw@\hbox{\m@th$#1%
		\widehat{%
			\vrule\@width\z@\@height\ht\z@
			\vrule\@height\z@\@width\wd\z@}$}%
	\dp\tw@-\ht\z@
	\@tempdima\ht\z@ \advance\@tempdima2\ht\tw@ \divide\@tempdima\thr@@
	\setbox\tw@\hbox{%
		\raise\@tempdima\hbox{\scalebox{1}[-1]{\lower\@tempdima\box
				\tw@}}}%
	{\ooalign{\box\tw@ \cr \box\z@}}}
\newtheorem{theorem}{Theorem}
\newtheorem{lemma}{Lemma}
\newtheorem{proposition}{Proposition}
\newtheorem{corollary}{Corollary}
\newtheorem{remark}{Remark}
\newtheorem{definition}{Definition}
\numberwithin{equation}{section}
\newcommand{\hmu}{\hat \mu}
\newcommand{\cF}{{\mathcal F}}
\newcommand{\hSigma}{\hat \Sigma}
\newcommand{\supp}{\operatorname{supp}}
\newcommand{\dist}{\operatorname{dist}}
\newcommand{\dive}{\operatorname{div}}
\newcommand{\tr}{ {\mbox{trace}}}
\newcommand{\eps}{\varepsilon}
\newcommand{\cP}{{\mathcal P}}
\newcommand{\mC}{\mathbb{C}}
\newcommand{\mN}{\mathbb{N}}
\newcommand{\mR}{\mathbb{R}}
\newcommand{\mZ}{\mathbb{Z}}
\newcommand{\hv}{\hat v}
\newcommand{\hcT}{\hat {\mathcal T}}
\newcommand{\tf}{\widetilde f}
\newcommand{\tF}{\widetilde F}
\newcommand{\tg}{\widetilde g}
\newcommand{\tG}{\widetilde G}
\newcommand{\bT}{{\bf T}}
\newcommand{\cS}{{\mathcal S}}
\newcommand{\cK}{{\mathcal K}}
\newcommand{\bK}{{\bf K}}
\newcommand{\cL}{{\mathcal L}}
\newcommand{\tlambda}{\widetilde \lambda}
\title[Weyl law of transmission eigenvalues and the completeness]{The Weyl law of transmission eigenvalues and the completeness  of generalized transmission eigenfunctions}
\author{Hoai-Minh Nguyen}
\address[Hoai-Minh Nguyen]{Ecole Polytechnique F\'ed\'erale de Lausanne, EPFL,  CAMA, 
\newline\indent Station 8,  CH-1015 Lausanne, Switzerland.}
\email{hoai-minh.nguyen@epfl.ch}
\author{Quoc-Hung Nguyen}
\address[Quoc-Hung Nguyen]{Institute of Mathematical Sciences, ShanghaiTech University, 
	\newline\indent 393 Middle Huaxia Road, Pudong,  Shanghai, 201210, China.}
\email{qhnguyen@shanghaitech.edu.cn}
\begin{document}

\maketitle 

\begin{abstract} The transmission problem is a system of two second-order elliptic equations of  two unknowns  equipped with the Cauchy data on the boundary. After four decades of research motivated by scattering theory, the spectral properties of this problem are now known to depend on a type of contrast between coefficients near the boundary.  
Previously, we established the discreteness of eigenvalues for a large class of anisotropic coefficients which is related to the celebrated complementing conditions due to Agmon, Douglis, and Nirenberg. In this work, we establish the Weyl law for the eigenvalues  and the completeness of the generalized eigenfunctions for this class of coefficients under an additional mild assumption on the continuity of the coefficients.  The analysis is new and based on the $L^p$ regularity  theory for the transmission problem established here. It also involves a subtle application of the spectral theory for the Hilbert Schmidt operators.  Our work extends largely known results in the literature which are mainly devoted to  the isotropic case with $C^\infty$-coefficients. 
\end{abstract}


\section{Introduction}

The  transmission eigenvalue problem plays a role in the   inverse scattering theory for inhomogeneous media.  This eigenvalue  problem is connected to the injectivity of the relative scattering operator \cite{CM88}, \cite{Kirsch86}. Transmission eigenvalues are  related to interrogating frequencies for which there is an incident field that is  not scattered by the medium.  In the acoustic setting,  the  transmission problem is a system of two second-order elliptic equations  of two unknowns  equipped with the Cauchy data on the boundary.   After four decades of extensive study, the spectral properties are known to depend on a type of contrasts of the media near the boundary (i.e., a difference of some relation of the respective coefficients in each of the equations)."
Natural and interesting questions on  the inverse scattering theory include:  {\it discreteness} of the spectrum (see e.g. \cite{CCG10, BCH11, Sylvester12, LV12, MinhHung1})  {\it location} of transmission eigenvalues (see  \cite{CGH10, CL12, Vodev15, Vodev18},  and also  \cite{CMV20} for the application in time domain), and the {\it Weyl law} of transmission eigenvalues and the {\it completeness} of the generalized eigenfunctions (see e.g. \cite{LV12, LV12-A, BP13, LV15, Robbiano16}).  We refer the reader to \cite{CCH16} for a recent, and self-contained introduction  to the transmission problem and its applications.

This paper concerns the Weyl law of  eigenvalues and the completeness of the generalized eigenfunctions of the transmission problem  in the time-harmonic acoustic setting.  Let us  introduce its  mathematical formulation. Let  $\Omega$ be a bounded, simply connected, open subset of $\mR^d$ of class $C^2$ with $d\geq 2$. Let $A_1,A_2$ be two real,  symmetric matrix-valued functions,  and let  $\Sigma_1,\Sigma_2$ be two bounded  positive functions that are all defined in $\Omega$. Assume that $A_1$ and $A_2$ are uniformly elliptic,  and $\Sigma_1$ and  $\Sigma_2$ are bounded below by a positive constant in $\Omega$, i.e., for some constant $\Lambda \ge 1$,  one has,  for $j=1, 2$, 
\begin{equation}\label{condi1}
\Lambda^{-1} |\xi|^2\leq \langle  A_j(x) \xi, \xi \rangle \leq \Lambda |\xi|^2  \quad \mbox{ for all } \xi \in \mR^d, \mbox{ for a.e. }  x\in \Omega, 
\end{equation}
and
\begin{equation}\label{condi2}
\Lambda^{-1}  \leq \Sigma_j(x)\leq \Lambda \mbox{ for a.e. } x \in \Omega.  
\end{equation}
Here and in what follows,   $\langle \cdot, \cdot \rangle$ denotes the Euclidean scalar product in $\mR^d$ and $|\cdot|$ is the corresponding norm.  A complex number $\lambda$ is called an eigenvalue of the transmission eigenvalue problem associated with the pairs $(A_1, \Sigma_1)$ and $(A_2, \Sigma_2)$ in $\Omega$ if there is a non-zero pair of functions $(u_1, u_2) \in [H^1(\Omega)]^2$ that satisfy the system
\begin{equation}\label{pro1}  
\left\{\begin{array}{cl}
\dive(A_1 \nabla u_1) - \lambda\Sigma_1 u_1= 0 &\text{ in}~\Omega, \\[6pt]
\dive(A_2 \nabla u_2) - \lambda\Sigma_2 u_2= 0 &\text{ in}~\Omega, \\[6pt]
u_1 =u_2, \quad  A_1 \nabla u_1\cdot \nu = A_2 \nabla u_2\cdot \nu  & \text{ on }\Gamma. 
\end{array}\right. 
\end{equation}
Here and in what follows,  $\Gamma$ denotes $\partial \Omega$, and  $\nu$ denotes the outward, normal, unit vector on $\Gamma$. Such a pair  $(u_1, u_2)$ is then called an eigenfunction pair of \eqref{pro1}. 

The Weyl law of transmission eigenvalues has been investigated under various assumptions on $(A_1, \Sigma_1)$ and $(A_2, \Sigma_2)$. Robbiano \cite{Robbiano16} (see also \cite{Robbiano13}) gives the sharp order of the counting number when $A_1 = A_2 = I$, and  $\Sigma_2 \neq \Sigma_1 = 1$ near the boundary and $\Sigma_2$ is smooth.  The analysis is based on both the microanalysis (see e.g. \cite{GG94, Zworski12}) and the regularity theory for the transmission problem.   
In \cite{LV12-A}, Lakshtanov and Vainberg obtained similar results when $A_1 = I$, $\Sigma_1 = 1$, under certain assumptions on $A_2$ and $\Sigma_2$. In particular, they required that $\Sigma_2^{-1}A_2 - I$ is positive definite or negative definite in the whole domain $\Omega$. They also investigated the order of the counting functions for positive and negative eigenvalues under different assumptions on $A_2$ and $\Sigma_2$ (see also \cite{PS14, LV15}) via concepts on billiard trajectories.  In the isotropic case, the Weyl law for the remainder was established by Petkov and Vodev \cite{PV17} and Vodev \cite{Vodev18, Vodev18-2, Vodev19} for $C^\infty$ coefficients that satisfy the conditions  \eqref{cond1} and \eqref{cond2}. The case where $A_1 = A_2 $ and represent scalar functions was also investigated in their work. Their analysis  is heavily based on  microanalysis and required a strong smoothness condition. In addition, their work involved a delicate analysis on the Dirichlet to Neumann maps using non-standard parametrix construction  initiated by Vodev \cite{Vodev15}.  It is not clear how one can improve the $C^\infty$ condition  and  extend their results to the anisotropic setting using their analysis.  Concerning the completeness of the generalized eigenfunctions, we want to mention the work of Robbiano \cite{Robbiano13} where  $A_1 = A_2 = I$ and  $\Sigma_2 \neq \Sigma_1 = 1$, and the work of Bl\"asten and P\"{a}iv\"{a}rinta \cite{BP13} where  $A_1 = A_2 = I$, and  $\Sigma_2  - \Sigma_1 = \Sigma_2 - 1 > 0$ and smooth in $\bar \Omega$.

In this paper, we investigate the Weyl law of eigenvalues and the completeness of the generalized eigenfunctions for transmission problem under quite general assumptions on $A_1, \, A_2, \,  \Sigma_1,\,  \Sigma_2$. These  are only imposed on the boundary of $\partial \Omega$ except for  the continuity requirement. The starting point and one of the main motivations of our work are our discreteness result established in \cite{MinhHung1}. We demonstrated the discreteness holds if $A_1, \,  A_2,  \, \Sigma_1, \, \Sigma_2$ are continuous in a neighborhood of the boundary $\Gamma$, and satisfy the following two conditions, with $\nu = \nu(x)$:  
 \begin{equation}\label{cond1}
\langle A_2(x) \nu, \nu\rangle \langle A_2(x) \xi, \xi \rangle  - \langle A_2(x) \nu,  \xi \rangle^2 
\neq \langle A_1(x) \nu, \nu\rangle \langle A_1(x) \xi, \xi \rangle  - \langle A_1(x) \nu,  \xi \rangle^2, 
\end{equation}
for all $x \in \Gamma$ and for all $\xi \in \mR^d \setminus \{0 \}$ with  $\langle \xi, \nu \rangle =0$,  and 
\begin{equation}\label{cond2}
\big\langle  A_2(x) \nu, \nu \big\rangle \Sigma_2(x) \neq \big\langle  A_1(x) \nu, \nu \big\rangle \Sigma_1(x),   \; \forall \, x \in \Gamma. 
\end{equation}
Condition \eqref{cond1} is equivalent to the celebrated complementing condition due to Agmon, Douglis, and Nirenberg \cite{ADNII} (see also \cite{ADNI}). The explicit formula given here was derived in \cite{Ng-WP}.

In this paper, we establish that if  conditions \eqref{cond1} and \eqref{cond2} hold  then  the Weyl law for transmission eigenvalues and the completeness of the generalized eigenfunctions hold  as well,  under the mild assumption that the coefficients are continuous in $\bar \Omega$. More precisely, we have 

\begin{theorem}\label{thm1} Assume that $A_1, \, A_2, \, \Sigma_1, \, \Sigma_2\in C^0(\bar \Omega)$,  and \eqref{cond1} and \eqref{cond2} hold.  Then 
	\begin{align}\label{thm1-WL1}
	N(t): = \#\Big\{k \in \mN: |\lambda_k|\leq t \Big\}= \mathbf{c} t^{\frac{d}{2}}+o(t^{\frac{d}{2}}) \text{ as}~~t \to  + \infty, 
	\end{align}
	where 
	\begin{equation}\label{thm1-c}
		\mathbf{c}=\frac{1}{(2\pi)^d} \sum_{j=1}^2 \int_{\Omega} \Big|\Big\{\xi \in \mR^d: \langle  A_j(x) \xi, \xi \rangle<\Sigma_j(x)\Big\} \Big| \, dx. 
	\end{equation}
\end{theorem}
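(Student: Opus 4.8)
The plan is to reduce the transmission eigenvalue problem to the spectral analysis of a non-self-adjoint compact operator on a suitable Hilbert space, and then apply the theory of Hilbert–Schmidt operators together with a careful comparison argument to extract the leading Weyl asymptotics. First I would rephrase \eqref{pro1} variationally: setting $u = u_1 - u_2 \in H^1_0(\Omega)$ (this is where $u_1 = u_2$ and the matching of conormal derivatives on $\Gamma$ enter), the difference $u$ satisfies a fourth-order problem whose resolvent can be written in terms of the two scalar resolvents $R_j(\lambda) := (\dive(A_j\nabla\cdot) - \lambda\Sigma_j)^{-1}$. Because \eqref{cond1}–\eqref{cond2} give us the $L^p$ regularity theory for the transmission problem established earlier in this paper, the associated solution operator $\mathcal{T}(\lambda)$ is well defined and compact away from the (discrete) spectrum, and $\lambda$ is a transmission eigenvalue precisely when $I - \lambda\,\mathcal{T}$ fails to be invertible. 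The key structural point I would establish is that, after an appropriate symmetrization using the square roots $\Sigma_j^{1/2}$, the relevant operator is Hilbert–Schmidt, indeed that its singular values decay like $k^{-2/d}$, which is exactly the decay that produces a $t^{d/2}$ counting function.

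Next I would set up the counting estimate. Writing $K$ for the Hilbert–Schmidt operator whose nonzero eigenvalues are the reciprocals $\lambda_k^{-1}$ of the transmission eigenvalues, the Weyl law \eqref{thm1-WL1} is equivalent to an asymptotic count of eigenvalues of $K$ in shrinking disks. Since $K$ is not self-adjoint, I cannot use the min–max principle directly; instead I would use the fact that the \emph{self-adjoint} building blocks here are the operators $-\dive(A_j\nabla\cdot)$ with weight $\Sigma_j$ on $\Omega$ with Dirichlet (respectively Neumann) boundary conditions, whose spectral counting functions obey the classical Weyl law with constant $\frac{1}{(2\pi)^d}\int_\Omega |\{\xi : \langle A_j(x)\xi,\xi\rangle < \Sigma_j(x)\}|\,dx$. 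The strategy is a two-sided enclosure: I would show that the transmission eigenvalue counting function is squeezed between counting functions of self-adjoint problems that differ only by lower-order/boundary contributions, using the $L^p$ regularity theory to control the coupling term as a relatively compact, spectrally negligible perturbation. Concretely, one shows $N(t) = N_1(t) + N_2(t) + o(t^{d/2})$ where $N_j$ is the Weyl count for the $j$-th self-adjoint operator, which gives the stated constant $\mathbf{c}$ as the sum of the two bracketed integrals.

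The technical heart — and the main obstacle — is handling the non-self-adjointness together with the fact that the coefficients are merely continuous, not smooth, so microlocal/parametrix methods (as in Vodev, Petkov–Vodev, Robbiano) are unavailable. Here I would freeze coefficients: partition $\bar\Omega$ into small pieces on which $A_j, \Sigma_j$ are nearly constant, compare on each piece with a constant-coefficient model transmission problem (whose eigenvalue count can be computed explicitly and contributes the local density $|\{\xi : \langle A_j(x)\xi,\xi\rangle < \Sigma_j(x)\}|$), and sum up; continuity plus a Tauberian-type argument controls the error as the mesh tends to zero. The spectral-theoretic subtlety the abstract mentions is that to pass from the Hilbert–Schmidt bound on singular values to an \emph{asymptotic count of eigenvalues} of a non-normal operator one needs more than Weyl's inequality for singular values — one needs that the operator is, up to a trace-class (or at least a smaller Schatten-class) perturbation, similar to a self-adjoint one, or that Lidskii-type/complex-analytic arguments on the characteristic function $\det(I-\lambda\mathcal{T})$ apply. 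I would therefore expect the bulk of the work to be: (i) proving the Hilbert–Schmidt and Schatten-class mapping properties of $\mathcal{T}(\lambda)$ from the $L^p$ theory, with explicit control of the dependence on $\lambda$; (ii) the freezing-of-coefficients comparison yielding the model-problem density; and (iii) the spectral bookkeeping that converts singular-value asymptotics into the eigenvalue Weyl law \eqref{thm1-WL1} with the precise constant \eqref{thm1-c}, for which I expect step (iii) to be the genuinely delicate one.
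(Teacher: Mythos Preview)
Your proposal identifies several of the right ingredients (the $L^p$ theory, freezing coefficients, Hilbert--Schmidt classes, the need for something Tauberian), but the central mechanism you propose is not the one that actually works here, and in fact has a genuine gap.

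First, two smaller points. The operator $T_\lambda$ is \emph{not} Hilbert--Schmidt when $d\ge 4$; only a product of $k+1=[d/2]+1$ resolvents is, so the ``singular values decay like $k^{-2/d}$'' statement for a single $K$ is not available in the form you want. Second, the fourth-order reformulation via $u=u_1-u_2\in H^1_0(\Omega)$ is not what the paper does, and in the anisotropic case with merely continuous coefficients it is not clear that this reduction buys you anything.

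The main gap is your step (iii): you propose to squeeze $N(t)$ between the Weyl counts $N_1(t)+N_2(t)$ of the two self-adjoint operators $-\dive(A_j\nabla\cdot)$ with weight $\Sigma_j$, treating the coupling as a ``spectrally negligible perturbation''. But the transmission eigenvalues are genuinely complex and there is no min--max or variational enclosure available for this non-self-adjoint problem; a relatively-compact-perturbation argument controls essential spectrum, not eigenvalue counting asymptotics. You acknowledge this is ``genuinely delicate'' but offer no mechanism beyond naming Lidskii and $\det(I-\lambda\mathcal T)$.

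What the paper actually does is quite different and avoids any comparison with self-adjoint problems. One chooses $2(k+1)$ carefully placed points $\mu_j=\lambda_0+tz_j$, where the $z_j$ are the $(k+1)$st roots of $e^{i\pi/4}$ and $e^{i5\pi/4}$, so that the product $T_{\mu_{2(k+1)}}\circ\cdots\circ T_{\mu_1}$ telescopes (via the modified-operator identity $T_{\lambda+s}=T_\lambda(I-sT_\lambda)^{-1}$) into $(T_{\lambda_0}^{k+1})_z$. Agmon's trace formula for a Hilbert--Schmidt operator and its modified operator then gives exactly
\[
\tr\bigl(T_{\mu_{2(k+1)}}\circ\cdots\circ T_{\mu_1}\bigr)=\sum_j\frac{1}{\tilde\lambda_j^{2(k+1)}-it^{2(k+1)}}.
\]
Separately, the same trace is computed by approximating the integral kernel via frozen-coefficient comparison (this is where the freezing argument enters, at the level of kernels, not of eigenvalue counts), yielding $\hat{\mathbf c}\,t^{d/2-2(k+1)}+o(\cdot)$ with an explicit $\hat{\mathbf c}$. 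One then passes from $\tilde\lambda_j^{2(k+1)}$ to $|\tilde\lambda_j|^{2(k+1)}$ using the eigenvalue-free region coming from the $L^p$ estimates, takes the imaginary part to get a real Stieltjes-type integral, and applies the Hardy--Littlewood Tauberian theorem to extract $N(t)=\mathbf c\,t^{d/2}+o(t^{d/2})$. The clever choice of the $\mu_j$ and the resulting exact trace identity are the missing idea in your outline.
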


For a measurable subset $D$ of $\mR^d$, we denote $|D|$ its  (Lebesgue) measure. 

\medskip 

We also have 

\begin{theorem} \label{thm1-C} Assume that $A_1, \, A_2, \, \Sigma_1, \, \Sigma_2 \in C^0(\bar \Omega)$,  and \eqref{cond1} and \eqref{cond2} hold.    Then the generalized eigenfunctions are complete in $[L^2(\Omega)]^2$. 
\end{theorem}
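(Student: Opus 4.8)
The plan is to deduce the completeness of the generalized eigenfunctions from the Weyl law of Theorem~\ref{thm1} together with the $L^p$-regularity theory for the transmission problem, via the classical Hilbert--Schmidt (Keldysh-type) machinery for non-selfadjoint compact operators. The first step is to reformulate the transmission eigenvalue problem \eqref{pro1} as the spectrum of a compact operator $\mathbb{T}$ on the Hilbert space $H = [L^2(\Omega)]^2$. Concretely, fix a regular value $\lambda_0$ (not an eigenvalue; such a value exists by the discreteness result of \cite{MinhHung1}), and for $(f_1, f_2) \in H$ let $(u_1, u_2) = \mathbb{T}(f_1, f_2)$ be the unique solution of the transmission problem with source terms $\Sigma_1 f_1, \Sigma_2 f_2$ and spectral parameter $\lambda_0$; that is, $\dive(A_j \nabla u_j) - \lambda_0 \Sigma_j u_j = \Sigma_j f_j$ in $\Omega$ together with the Cauchy matching conditions on $\Gamma$. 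Unique solvability and the mapping $H \to [H^1(\Omega)]^2$ follow from the well-posedness results, and the compact embedding $H^1(\Omega) \hookrightarrow L^2(\Omega)$ makes $\mathbb{T}$ compact on $H$; moreover $\lambda$ is a transmission eigenvalue iff $(\lambda - \lambda_0)^{-1}$ is an eigenvalue of $\mathbb{T}$, with matching generalized eigenspaces, so completeness of generalized transmission eigenfunctions in $H$ is equivalent to completeness of the generalized eigenvectors (root vectors) of $\mathbb{T}$ in $H$.

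The second step is to invoke the abstract completeness criterion: if a compact operator $\mathbb{T}$ on a Hilbert space has the property that $\mathbb{T} = \mathbb{T}_1 + \mathbb{T}_2$ where $\mathbb{T}_1$ is (up to sign) selfadjoint and $\mathbb{T}_2$ belongs to a Schatten class $\mathcal{S}_p$ with a quantitative bound on its singular values, and if the singular values of $\mathbb{T}$ itself decay like $k^{-2/d}$ (so that $\mathbb{T} \in \mathcal{S}_p$ for any $p > d/2$), then by the Keldysh theorem and its refinements (as in Agmon, or Gohberg--Krein, or in the form used by Lakshtanov--Vainberg and Robbiano) the system of root vectors is complete provided the resolvent $(\mathbb{I} - z\mathbb{T})^{-1}$ grows at most polynomially along sufficiently many rays in $\mathbb{C}$, the number of rays being tied to the order $d/2$. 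Here the selfadjoint-up-to-compact structure comes from symmetry of $A_j$ and positivity of $\Sigma_j$; the transmission coupling is what breaks selfadjointness, but it only affects a lower-order piece because of the boundary nature of the contrast conditions \eqref{cond1}--\eqref{cond2}. The singular value asymptotics $s_k(\mathbb{T}) \asymp k^{-2/d}$ is exactly the content of the Weyl law \eqref{thm1-WL1}: the counting of eigenvalues of the selfadjoint comparison operators grows like $\mathbf{c}\, t^{d/2}$, and the $L^p$-regularity theory upgrades this to a two-sided bound on the singular numbers of $\mathbb{T}$ (this is where the ``subtle application of the spectral theory for Hilbert--Schmidt operators'' enters).

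The third step, and the one I expect to be the main obstacle, is establishing the required resolvent bounds $\|(\mathbb{I} - z \mathbb{T})^{-1}\| \le C|z|^{m}$ along a family of rays in $\mathbb{C}$ whose angular separation is small enough relative to the order $d/2$ — equivalently, showing that outside a discrete set the transmission problem with parameter $\lambda$ is solvable with operator norm bounded polynomially in $|\lambda|$ for $\lambda$ in sectors away from $\mR^+$ (and, with more care, including directions near $\mR^+$ as well, since both positive and negative eigenvalues occur). For the non-transmission (say Dirichlet) problem this is standard parabolic-type resolvent estimate territory, but here the coupling through the Cauchy data on $\Gamma$ must be controlled: one needs to show that the ``transmission perturbation'' of the resolvent is relatively bounded with a gain, using the $L^p$ trace and regularity estimates for the transmission problem together with conditions \eqref{cond1}--\eqref{cond2} to guarantee no loss of derivatives at the interface. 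Concretely I would: (i) prove an a priori estimate of the form $\|(u_1,u_2)\|_{H^1} \le C|\lambda|^{m}\big(\|f_1\|_{L^2} + \|f_2\|_{L^2}\big)$ for $\lambda$ in the relevant sectors, by combining interior elliptic estimates, the complementing-condition boundary estimates, and absorption of the lower-order $\lambda \Sigma_j u_j$ terms; (ii) translate this into the resolvent bound for $\mathbb{T}$; (iii) verify that the number of admissible rays exceeds the critical number $\pi/(d/2) \cdot (\text{something})$ needed for Keldysh-type completeness given $s_k(\mathbb{T}) = O(k^{-2/d})$. Once (i)--(iii) are in place, the Keldysh theorem yields completeness of the root vectors of $\mathbb{T}$ in $[L^2(\Omega)]^2$, hence Theorem~\ref{thm1-C}. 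The delicate point throughout is that $\mathbb{T}$ is genuinely non-normal and its numerical range is not confined to a half-line, so one cannot appeal to spectral theory of selfadjoint operators directly; the whole argument hinges on the quantitative singular-value bound supplied by the Weyl law plus enough good rays for the resolvent.
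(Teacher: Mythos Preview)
Your general framework---reduce to a compact operator, establish Schatten membership, control the resolvent along enough rays, and invoke a Keldysh/Agmon-type completeness theorem---is the right one, and is essentially what the paper does. But two specific claims in your proposal are genuine gaps, and the paper's argument avoids them by a different route.

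First, the decomposition $\mathbb{T}=\mathbb{T}_1+\mathbb{T}_2$ with $\mathbb{T}_1$ selfadjoint and $\mathbb{T}_2$ ``lower order'' is not available here. The adjoint computation \eqref{adjopT} gives $T_\lambda^*=\mathrm{diag}(\Sigma_1,-\Sigma_2)\,T_{\bar\lambda}\,\mathrm{diag}(\Sigma_1^{-1},-\Sigma_2^{-1})$, so the non-selfadjointness is built into the principal part via the sign flip in the second component, not merely a boundary perturbation; there is no natural selfadjoint $\mathbb{T}_1$ to peel off. Relatedly, your assertion that ``$s_k(\mathbb{T})\asymp k^{-2/d}$ is exactly the content of the Weyl law'' conflates eigenvalue and singular-value asymptotics: Theorem~\ref{thm1} counts eigenvalues, and for a genuinely non-normal operator this does not give two-sided singular-value bounds. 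So the second step of your plan, as written, does not go through.

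The paper bypasses both issues. Instead of $T_{\lambda_0}$ it works with the power $T_{\lambda_0}^{k+1}$, $k=[d/2]$, which is Hilbert--Schmidt directly by the $L^p\to L^q$ smoothing estimates of Theorem~\ref{thm2} (Proposition~\ref{pro-HS}); no Weyl law and no selfadjoint comparison are needed for this. For the resolvent bound, the key algebraic trick is that the modified operator factors as
\[
\big(T_{\lambda_0}^{k+1}\big)_z \;=\; T_{\eta_{k+1}}\circ\cdots\circ T_{\eta_1},
\qquad \eta_l=\lambda_0+\tau_l,\ \ \tau_l^{k+1}=z,
\]
so the resolvent of the power is a composition of $k+1$ operators each of which, by Theorem~\ref{thm2}, has norm $O(|z|^{-1/(k+1)})$ whenever the $\eta_l$ lie in admissible sectors. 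This yields decay of the modified operator along every ray $\arg z=\theta$ with $\inf_n|\theta-n\pi|>\varepsilon_0$---far more rays than the critical number required by Agmon's Theorem~16.4. One then concludes that the generalized eigenfunctions of $T_{\lambda_0}^{k+1}$ span $\overline{R(T_{\lambda_0}^{k+1})}=[L^2(\Omega)]^2$, and observes that $T_{\lambda_0}$ and $T_{\lambda_0}^{k+1}$ share the same generalized eigenspaces. Your step~(i) is in fact already Theorem~\ref{thm2}; what you are missing is this factorization, which converts the a~priori estimate for a single $T_\lambda$ into the required resolvent control for the Hilbert--Schmidt power without any selfadjoint comparison.
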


\begin{remark} \rm As a direct consequence of either \Cref{thm1} or \Cref{thm1-C}, the number of eigenvalues of the transmission problem is infinite. As far as we know, this fact is new under the general assumptions stated here. 
\end{remark}

Some comments on \Cref{thm1} and \Cref{thm1-C}  are in order. In the conclusion of  \Cref{thm1}, the multiplicity of eigenvalues is taken into account. The meaning of the multiplicity is understood as follows. One can show (see \cite{MinhHung1}, and also \Cref{thm2}) that the well-posedness of the following system in $[H^1(\Omega)]^2$:  
\begin{equation}\label{pro1az*} 
\left\{ \begin{array}{cl}
\dive(A_1 \nabla u_1) - \lambda\Sigma_1 u_1=  \Sigma_1 f_1 &\text{ in}~\Omega, \\[6pt]
\dive(A_2 \nabla u_2) - \lambda\Sigma_2 u_2= \Sigma_2 f_2 &\text{ in}~\Omega, \\[6pt]
u_1 =u_2, \quad  A_1 \nabla u_1\cdot \nu = A_2 \nabla u_2\cdot \nu & \text{ on }\Gamma, 
\end{array}
\right.
\end{equation}
holds for all $(f_1,f_2) \in [L^2(\Omega)]^2$ and for some $\lambda \in \mC$ under the assumptions of \Cref{thm1}. We then define the operator $T_\lambda: [L^2(\Omega)]^2 \to [L^2(\Omega)]^2$ by 
\begin{equation}\label{def-T-*}
T_\lambda(f_1, f_2) = (u_1, u_2)  \mbox{ where $(u_1, u_2)$ is the unique solution of \eqref{pro1az*}}. 
\end{equation}
We can also prove that such a $T_\lambda$ is compact using a priori estimates.  If $\lambda_j$ is an eigenvalue of the transmission problem,  then $\lambda_j \neq \lambda$, and  $\lambda_j - \lambda$ is a characteristic value  of $T_\lambda$ (i.e., $(\lambda_j - \lambda)^{-1}$ is its eigenvalue) and conversely. One can show that the multiplicity of the characteristic values $\lambda_j - \lambda$
and $\lambda_j - \hat \lambda$
(which are the multiplicity of $(\lambda_j - \lambda)^{-1}$ and $(\lambda_j - \hat \lambda)^{-1}$, see \Cref{def-1} below) associated with $T_\lambda$ and $T_{\hat \lambda}$ are the same as long as $T_\lambda$ and $T_{\hat \lambda}$ are well-defined (see \Cref{rem-multiplicity}). Hence, the multiplicity of eigenvalues that are associated with $T_\lambda$ is independent of $\lambda$ and  it is used in Assertion \eqref{thm1-WL1}. 
One can also prove that  $T_\lambda$ and $T_{\hat \lambda}$ have the same set of the generalized eigenfunctions. 
In \Cref{thm1-C}, the generalized eigenfunctions are associated to such a $T_\lambda$. We recall that the generalized eigenfunctions are complete in  $[L^2(\Omega)]^2$ if the subspace spanned by them is dense in  $[L^2(\Omega)]^2$.

\medskip 
Recall that, see e.g.  \cite[Definition 12.5]{Agmon}: 

\begin{definition} \label{def-1} \rm Let $\gamma$ be an eigenvalue of a linear continuous operator $A: H \to H$ where $H$ is a Hilbert space. A non-zero vector $v$ is a generalized eigenvector of $A$ corresponding to $\gamma$ if
$(\gamma I -A)^k v=0$ holds for some positive integer $k$. 
The set of all generalized eigenvectors of $A$ corresponding to the eigenvalue $\gamma$ together with the origin in $H$, forms a subspace of $H$, whose dimension is the multiplicity of  $\gamma$. 
\end{definition}

\Cref{thm1} gives the order of the counting function $N(t)$
and its first-order approximation.  \Cref{thm1} and \Cref{thm1-C} provide new general conditions on the coefficients for which the Weyl law and the completeness of the generalized eigenfunctions hold. These conditions are imposed only on the boundary and the regularity assumption is very mild.  

\begin{remark} \rm It is worth noting that the convention of eigenvalues of the transmission problem in the work of Lakshtanov and Vainberg is similar to ours and different from that of Robbiano (also the work Petkov and Vodev, and Vodev mentioned above) where $\lambda^2$ is used in \eqref{pro1} instead of $\lambda$ (where $\lambda$ is used but $t^2$ is considered instead of $t$ in the formula of the counting function). 
\end{remark}

\begin{remark} \rm In \cite{PV17}, Petkov and Vodev considered the isotropic setting and obtained  a shaper estimate for the remainder of  \eqref{thm1-WL1}  as in the spirit of H\"ormander \cite{Hormander68}. Other refined estimates were given in \cite{Vodev18, Vodev18-2,Vodev19} and  are obtained under the $C^\infty$ smoothness assumption. Under the continuity assumption on the smoothness of coefficients, a better estimate for the remainder of \eqref{thm1-WL1} as in \cite{PV17} is implausible. Nevertheless, it is interesting to obtain better estimates for the remainder as in \cite{PV17, Vodev18, Vodev18-2,Vodev19} for sufficiently regular coefficients and/or for the anisotropic setting. 
\end{remark}

Our strategy of the analysis is to develop the approach in \cite{MinhHung1} at the level where  one can apply the general spectral  theory for Hilbert-Schmidt operators in Hilbert space as given in  Agmon \cite{Agmon} (see also \cite{Agmon62}). Two important steps are follows. One is on  sharp estimates for $\| T_\lambda \|_{L^p \to W^{1, p}}$ for $p>1$ and its consequences (see \Cref{thm2}) for large $|\lambda|$ with an  appropriate direction. This,  in particular,  shows that $\bT : = T_{\hmu_1} \circ  \dots \circ T_{\hmu_{k+1}}$ with $k = [d/2]$ is a Hilbert - Schmidt operator (see \Cref{pro-HS}) for an appropriate choice of $\hmu_j \in \mC$.  The analysis of this part is on the regularity theory of the transmission problems in $L^p$-scale. This is one of the cores of this paper and has its own interest. To this end, we first investigate the corresponding problems in the whole space and in a half space with constant coefficients,  and then use the freezing-coefficient technique. The analysis also involves  the Mikhlin-H\"ormander multiplier theorem (in particular the theory of singular integrals) and Gagliardo-Nirenberg interpolation inequalities.  The second step is to apply the spectral theory for Hilbert-Schmidt operators. 
To this end, we use the estimates for $T_\lambda$ to obtain an approximation of the trace of the kernel of the product of $\bT$ and its appropriate modified operator (see \Cref{pro-trace}). The approximation of the trace of the kernel is then used to derive information for the Weyl law via a formula for eigenvalues established in \Cref{pro-trace0}. This formula is derived from the spectral theory of Hilbert-Schmidt operator and is interesting itself.  The completeness of the generalized eigenfunctions follows directly from the estimates for $T_\lambda$ in \Cref{thm2} where we pay special attention to the possible directions of $\lambda$ where the information can be derived, after applying the spectral theory in \cite{Agmon}.  

\begin{remark} \rm 
We use the regularity theory and spectral theory for Hilbert-Schmidt operators to investigate the Weyl law, which was also presented by Robbiano  \cite{Robbiano13}. Nevertheless, the way we derive the regularity theory in this paper is distinct from \cite{Robbiano13}, which involved Carleman's inequalities and the theory of microanalysis. The way we explore the information of Hilbert-Schmidt operators allows us to exactly obtain the first term of the Weyl Law 
in \eqref{thm1-WL1} instead of its magnitude  order as in \cite{Robbiano13}. 
\end{remark}

We  propose a new approach to establish the Weyl law of  eigenvalues and the completeness of the generalized eigenfunctions. This allows us  to obtain new significant results and strongly  weaken the smoothness assumption in  various known results,  that is out of reach previously.   The transmission problem also appears naturally for electromagnetic waves. In this case, it  is a system of two Maxwell systems   equipped the Cauchy data on the boundary. The spectral theory of the transmission problem for electromagnetic waves is much less known.  On this aspect, we point the reader to \cite{Cakoni-Ng20} on the discreteness,  and to  \cite{HM18} on the completeness. More information can be found in the references therein. The analysis in this paper will be developed for the Maxwell setting  in our forthcoming work.

The transmission problem has an interesting connection with the study of negative-index materials which are modeled by the Helmholtz or Maxwell equations with sign changing coefficients.  
In fact, our work has its roots in \cite{Ng-WP} where the stability of solutions of the Helmholtz equations with sign changing coefficients was studied. Concerning the Maxwell equations, the stability was studied in \cite{NgSil}. It is not coincident that  the transmission problem and the Helmholtz equations with sign-changing coefficients share some common analysis. In fact, using reflections (a class of changes of variables), the Cauchy problems appear naturally in the context of the Helmholtz  with sign-changing coefficients as first observed in \cite{Ng-Complementary} (see also \cite{Ng-Superlensing-Maxwell} for the Maxwell setting). 
Other properties of the Cauchy problems related to resonant (unstable) aspects and applications of negative-index materials such as cloaking and superlensing can be found in  \cite{Ng-Superlensing, Ng-CALR, Ng-Negative-Cloaking, Ng-CALR-O, Ng-Negative-Cloaking-M,Ng-CALR-O-M} and the references therein. 

\medskip

The paper is organized as follows. In \Cref{sect-Notation}, we introduce several notations used throughout the paper. 
In \Cref{sect-Lp}, we establish \Cref{thm2}, which describes  the regularity theory for the transmission problem in $L^p$-scale. 
In \Cref{sect-HS}, we recall some definitions, properties of  Hilbert-Schmidt operators,  and their
finite double-norms. We then derive their applications in the context of the transmission problem. The main result of this section is \Cref{pro-trace0}, which  is derived from \Cref{thm2}. The Weyl law and the completeness are then established in \Cref{sect-WL} and in \Cref{sect-C}, respectively. 
 
\section{Notations}\label{sect-Notation}

We denote, for $\tau > 0$,  
\begin{equation*}
\Omega_\tau=\Big\{x\in \Omega: \dist(x,\Gamma)<\tau \Big\}.
\end{equation*}
For $d \ge 2$, set
$$
\mR^d_+ = \Big\{x \in \mR^d; x_d > 0 \Big\} \quad \mbox{ and } \quad \mR^d_0 = \Big\{x \in \mR^d; x_d = 0 \Big\}.  
$$
We will identify $\mR^d_0$ with $\mR^{d-1}$ in several places. 

For $\theta \in \mR$ and $a>0$, denote 
\begin{equation}
\cL (\theta, a) = \Big\{r e^{i\theta} \in \mC: r \ge a \Big\}.
\end{equation}

\section{Regularity theory for transmission problems}\label{sect-Lp}

In this section, we establish several estimates for $T_\lambda$ for appropriate values of $\lambda$.  
The main results are as follows.

\begin{theorem}\label{thm2} Let $\eps_0>0$ and  $\Lambda \ge 1$.   Assume that \eqref{condi1} and \eqref{condi2} hold, and $A_1,\, A_2, \, \Sigma_1, \, \Sigma_2$ are continuous in $\bar \Omega$.  Assume that \eqref{cond1} and \eqref{cond2} hold in the following sense, with $\nu = \nu(x)$,  
\begin{equation}\label{cond1-thm2}
\left| \langle A_2(x) \nu, \nu\rangle \langle A_2(x) \xi, \xi \rangle  - \langle A_2(x) \nu,  \xi \rangle^2  - \Big(
 \langle A_1(x) \nu, \nu\rangle \langle A_1(x) \xi, \xi \rangle  - \langle A_1(x) \nu,  \xi \rangle^2 \Big) \right| \ge \Lambda^{-1} |\xi|^2, 
\end{equation}
for all $x \in \Gamma$ and for all $\xi \in \mR^d \setminus \{0 \}$ with  $\langle \xi, \nu \rangle =0$,  and 
\begin{equation}\label{cond2-thm2}
\Big| \langle  A_2(x) \nu, \nu \rangle \Sigma_2(x) -  \langle  A_1(x) \nu, \nu \rangle \Sigma_1(x) \Big| \ge \Lambda^{-1},   \; \forall \, x \in \Gamma. 
\end{equation} 
There exist two positive constants $\Lambda_0$ and $C$ depending only on $\Lambda$, $\eps_0$, $\Omega$, and the continuity modulus  of $A_1$, $A_2$, $\Sigma_1$, and $\Sigma_2$ in $\bar \Omega $ such that for $\theta \in \mR$  with $\inf_{n\in \mathbb{Z}}|\theta-n\pi|\geq \eps_0$, and for $\lambda \in \cL(\theta, \Lambda_0)$, the following fact holds: for $g= (g_1,g_2)\in [L^2(\Omega)]^2$,  there exists a unique solution   $u=(u_1,u_2)\in [H^1(\Omega)]^2$ of the system 
	\begin{equation}\label{pro1-sec1'}
		\left\{ \begin{array}{cl}
			\dive (A_1\nabla u_1) -\lambda\Sigma_1 u_1=g_1 ~~&\text{ in}~\Omega,\\[6pt]
			\dive (A_2\nabla u_2) -\lambda \Sigma_2 u_2= g_2~~&\text{ in}~\Omega,\\[6pt]
			u_1-u_2= 0,~~ (A_1\nabla u_1-A_2\nabla u_2) \cdot \nu =0 & \text{ on } \Gamma. 
		\end{array} \right. 
	\end{equation}
Moreover, for $1<p<\infty$, 
\begin{equation}\label{thm2-st1}
 \| \nabla u\|_{L^p(\Omega)} + |\lambda|^{1/2} \| u \|_{L^p(\Omega)}   \le C |\lambda|^{- \frac{1}{2}} \| g \|_{L^p(\Omega)}. 
\end{equation}
As a consequence, we have
\begin{itemize}
\item for $1<p<d$ and  $p\leq q\leq \frac{dp}{d-p}$,
 \begin{equation}\label{thm2-st2}
||u||_{L^{q}(\Omega)}\leq C |\lambda|^{-1+\frac{d}{2}\left(\frac{1}{p}-\frac{1}{q}\right)}\| g \|_{L^p(\Omega)}, 
\end{equation}
\item for $p>d$,  \begin{equation}\label{thm2-st3}
\|u\|_{L^{\infty}(\Omega)}\leq C |\lambda|^{-1+\frac{d}{2p}}\| g \|_{L^p(\Omega)},
\end{equation}
\item  for $p> d$ and $q = \frac{p}{p-1}$, 
\begin{equation}\label{thm2-st4}
\|u\|_{L^{q}(\Omega)}\leq C |\lambda|^{-1+\frac{d}{2}-\frac{d}{2q}}\| g \|_{L^1(\Omega)}.
\end{equation}
\end{itemize}
\end{theorem}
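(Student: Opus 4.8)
The plan is to reduce the variable-coefficient transmission problem to model problems with constant coefficients via a partition of unity and the freezing-coefficient technique, exactly as in the discreteness proof of \cite{MinhHung1}, but tracking the $L^p$-dependence and the $\lambda$-dependence quantitatively. First I would treat the two model problems: (i) the whole-space problem $\dive(A\nabla u) - \lambda \Sigma u = g$ in $\mR^d$ with $A, \Sigma$ constant, and (ii) the half-space transmission problem in $\mR^d_+$ with constant coefficients $(A_1, \Sigma_1)$, $(A_2, \Sigma_2)$ satisfying \eqref{cond1-thm2}--\eqref{cond2-thm2}, coupled across $\mR^d_0$ by the Cauchy data. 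For (i), after the rescaling $x \mapsto |\lambda|^{1/2} x$ one reduces to the resolvent estimate for a constant-coefficient second-order elliptic operator with spectral parameter $e^{i\theta}$ bounded away from the positive real axis; the symbol $\langle A\xi, \xi\rangle + e^{i\theta}\Sigma$ is then elliptic with lower bound comparable to $|\xi|^2 + 1$ uniformly for $\inf_n|\theta - n\pi| \ge \eps_0$, so the multiplier $\xi_j\xi_k(\langle A\xi,\xi\rangle + e^{i\theta}\Sigma)^{-1}$ and $(\langle A\xi,\xi\rangle + e^{i\theta}\Sigma)^{-1}$ (times $|\lambda|$) satisfy the Mikhlin-H\"ormander conditions with constants uniform in $\theta$, giving \eqref{thm2-st1} for the model. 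For (ii) one takes the partial Fourier transform in the tangential variables $x' \in \mR^{d-1}$, solves the resulting system of ODEs in $x_d$; the complementing condition \eqref{cond1-thm2} guarantees that the $2\times 2$ boundary matrix coupling the two decaying solutions is invertible with a quantitative lower bound, and \eqref{cond2-thm2} handles the low-tangential-frequency regime where the $x_d$-ODE degenerates. This produces a Poisson-type kernel whose tangential symbol again obeys Mikhlin-H\"ormander bounds after the $|\lambda|^{1/2}$-rescaling, yielding the analogue of \eqref{thm2-st1} in the half-space.

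Next I would globalize. Using that $\Omega$ is of class $C^2$, cover $\bar\Omega$ by finitely many balls: interior balls where only \eqref{condi1}--\eqref{condi2} are used and one applies the model estimate (i), and boundary balls where after flattening $\Gamma$ one applies (ii). On each ball the coefficients are replaced by their frozen values at the center; the error is $\|(A - A(x_0))\nabla u\|_{L^p} + |\lambda|\|(\Sigma - \Sigma(x_0)) u\|_{L^p}$, which by continuity of the coefficients and the choice of a sufficiently small ball radius (depending on the continuity modulus, as in the statement) is absorbed: the first term by a small multiple of $\|\nabla u\|_{L^p}$ and the second — crucially — by $\delta |\lambda| \|u\|_{L^p} \le \delta |\lambda|^{1/2} \cdot |\lambda|^{1/2}\|u\|_{L^p}$, i.e. a small multiple of the second term on the left of \eqref{thm2-st1}. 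Commutator terms from the cutoff functions are lower order in $\lambda$ and are handled by a standard interpolation/absorption argument (Gagliardo-Nirenberg to trade $\|u\|_{W^{1,p}}$ against $\|u\|_{L^p}$ and $\|\nabla^2 u\|$, the latter controlled locally by the equation). Summing over the cover and taking $|\lambda| \ge \Lambda_0$ large gives \eqref{thm2-st1}; existence and uniqueness of $u \in [H^1(\Omega)]^2$ follow for $p = 2$ from the estimate plus a Fredholm/continuity argument in $\lambda$ (the operator is invertible for $|\lambda|$ large in the admissible sector because the a priori bound \eqref{thm2-st1} with $p=2$ forbids a kernel), and then the solution is independent of $p$.

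Finally, the consequences \eqref{thm2-st2}--\eqref{thm2-st4} are pure functional analysis from \eqref{thm2-st1}. For \eqref{thm2-st2}: Sobolev embedding $W^{1,p}(\Omega) \hookrightarrow L^{dp/(d-p)}(\Omega)$ combined with \eqref{thm2-st1} gives the endpoint $q = dp/(d-p)$ with exponent $-1 + \frac{d}{2}(\frac1p - \frac1q)$; the intermediate $q$ follow by interpolating this with the $L^p \to L^p$ bound $|\lambda|^{-1}$ coming from the $|\lambda|^{1/2}\|u\|_{L^p}$ term. Estimate \eqref{thm2-st3} is the same with the embedding $W^{1,p} \hookrightarrow L^\infty$ for $p > d$. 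Estimate \eqref{thm2-st4} is obtained by duality: the adjoint problem (swap $A_j \mapsto A_j^{\mathsf{T}} = A_j$, same structure, replace $\lambda$ by $\bar\lambda$, which stays in an admissible sector since $\inf_n|{-\theta} - n\pi| \ge \eps_0$) satisfies \eqref{thm2-st3} with $L^1$ replaced by $L^{p}$ input mapping to $L^\infty$, hence by transposition $L^1 \to L^{p'}$ with the stated exponent. The main obstacle is the half-space model problem (ii): one must extract the quantitative invertibility of the boundary-coupling matrix uniformly in the tangential frequency and in $\lambda \in \cL(\theta,\Lambda_0)$ from \eqref{cond1-thm2}--\eqref{cond2-thm2}, and verify that the resulting Poisson kernels define $L^p$-bounded operators with the correct powers of $|\lambda|$ — this is where the complementing condition does its real work and where the singular-integral estimates must be carried out with care.
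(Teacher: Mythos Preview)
Your proposal follows essentially the same route as the paper: the whole-space model via Mikhlin--H\"ormander multipliers (the paper's Lemma~\ref{lem1}), the half-space transmission model via tangential Fourier transform and ODE analysis where \eqref{cond1-thm2}--\eqref{cond2-thm2} give the quantitative lower bound on the boundary coupling (Lemma~\ref{lem2}), then freezing coefficients to globalize (Corollaries~\ref{cor1} and~\ref{cor2}), and finally Gagliardo--Nirenberg for \eqref{thm2-st2}--\eqref{thm2-st3} and duality for \eqref{thm2-st4}.

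One point needs strengthening: your existence argument. You write that invertibility for large $|\lambda|$ follows because ``the a priori bound \eqref{thm2-st1} with $p=2$ forbids a kernel'', but an a priori estimate gives only injectivity; the transmission operator is neither self-adjoint (the paper computes $\mathcal{T}_\lambda^* = \operatorname{diag}(1,-1)\,\mathcal{T}_{\bar\lambda}\,\operatorname{diag}(1,-1)$) nor obviously Fredholm of index zero, so surjectivity is not automatic. The paper supplies this via a limiting absorption argument: it perturbs the principal parts by $\pm i\delta$ so that Lax--Milgram applies, lets $\delta \to 0_+$ to obtain solvability of an auxiliary problem with $\Sigma_2$ replaced by $-\Sigma_2$, and then writes the original system as a compact perturbation of this auxiliary one, to which the Fredholm alternative applies. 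Your phrase ``Fredholm/continuity argument in $\lambda$'' gestures toward this but is missing a concrete starting point where solvability is already known; the sign-flip on $\Sigma_2$ is precisely the device that provides one.
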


The remainder of this section contains two subsections, which are organized as follows.  In the first subsection, we establish several lemmas used in the proof of \Cref{thm2}. The proof of \Cref{thm2} is given in the second subsection. 

\subsection{Preliminaries}
In this section, we establish several results used in the proof of \Cref{thm2},  which is based on freezing coefficient technique. We begin with the corresponding settings/variants  with constant coefficients in $\mR^d$ and in $\mR^{d}_+$. The first one is 

\begin{lemma} \label{lem1} Let $d \ge 2$, $\Lambda \ge 1$, $\eps_0 > 0$, $1<p<\infty$,  and let  $A$ and  $\Sigma$ 
be a symmetric matrix and a non-zero real constant, respectively. Assume that $\inf_{n \in \mZ} |\theta - n \pi| \ge \eps_0$ and  $\lambda \in \cL(\theta, 1)$, 
\begin{equation}\label{lem1-ellipticiy}
\Lambda^{-1} \le A \le \Lambda \quad \mbox{ and } \quad \Lambda^{-1} \le |\Sigma| \le \Lambda. 
\end{equation}
For $g \in L^p(\mR^d)$ and $G \in [L^p(\mR^d)]^d$, let $u \in W^{1, p} (\mR^d)$ be the unique solution of 
$$
\dive (A \nabla u) - \lambda \Sigma u = g + \dive (G) \mbox{ in } \mR^d.  
$$
We have
\begin{equation}\label{lem1-st1-1}
 |\lambda|^{1/2} \| \nabla u \|_{L^p(\mR^d)} + |\lambda| \| u \|_{L^p(\mR^d)} \le C \Big( \| g\|_{L^p(\mR^d)} + |\lambda|^{1/2} \| G \|_{L^p(\mR^d)} \Big), 
\end{equation}
and, if $G = 0$,  
\begin{equation}\label{lem1-st1-2}
 \| \nabla^2 u \|_{L^p(\mR^d)}  \le C  \| g\|_{L^p(\mR^d)}. 
\end{equation}
Here $C$ denotes a positive constant depending only on $p$, $d$, $\Lambda$, and $\eps_0$. 
\end{lemma}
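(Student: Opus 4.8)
The plan is to prove \Cref{lem1} by Fourier analysis, since the operator has constant coefficients. Taking the Fourier transform, the equation $\dive(A\nabla u) - \lambda\Sigma u = g + \dive(G)$ becomes $-(\langle A\xi,\xi\rangle + \lambda\Sigma)\hat u(\xi) = \hat g(\xi) + i\langle G,\xi\rangle\,\widehat{}$, so that
\begin{equation*}
\hat u(\xi) = \frac{-\hat g(\xi) - i\xi\cdot\hat G(\xi)}{\langle A\xi,\xi\rangle + \lambda\Sigma}.
\end{equation*}
The key point is a lower bound on the symbol $m(\xi) := \langle A\xi,\xi\rangle + \lambda\Sigma$. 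Writing $\lambda = re^{i\theta}$ with $r \ge 1$ and $\dist(\theta,\pi\mZ) \ge \eps_0$, the imaginary part of $m(\xi)$ is $r\sin\theta\cdot\Sigma$ (up to the sign of $\Sigma$), whose modulus is $\gtrsim r$ uniformly because $|\sin\theta| \ge \sin\eps_0 > 0$ and $|\Sigma| \ge \Lambda^{-1}$; meanwhile the real part $\langle A\xi,\xi\rangle + r\cos\theta\,\Sigma$ controls the elliptic part when $|\xi|^2 \gtrsim r$. Combining, one gets the two-sided type estimate
\begin{equation*}
|\langle A\xi,\xi\rangle + \lambda\Sigma| \gtrsim |\xi|^2 + |\lambda| \quad\text{for all }\xi\in\mR^d,
\end{equation*}
with constant depending only on $\Lambda$ and $\eps_0$ (this is where the hypothesis $\dist(\theta,\pi\mZ)\ge\eps_0$ is essential — it keeps the symbol away from zero). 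From this, the multipliers
\begin{equation*}
\frac{|\lambda|}{\langle A\xi,\xi\rangle+\lambda\Sigma}, \quad \frac{|\lambda|^{1/2}\xi_k}{\langle A\xi,\xi\rangle+\lambda\Sigma}, \quad \frac{\xi_j\xi_k}{\langle A\xi,\xi\rangle+\lambda\Sigma}, \quad \frac{|\lambda|^{1/2}\xi_k}{\langle A\xi,\xi\rangle+\lambda\Sigma}\cdot\frac{\xi_j}{|\lambda|^{1/2}}\text{ (for the $G$ term)}
\end{equation*}
all map $L^p$ to $L^p$ for $1<p<\infty$.

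To make this rigorous I would verify the Mikhlin–H\"ormander condition: each of the above symbols $m_\alpha(\xi)$ satisfies $|\partial^\beta m_\alpha(\xi)| \le C_\beta |\xi|^{-|\beta|}$ for all multi-indices $\beta$ with $|\beta|\le \lfloor d/2\rfloor + 1$, with $C_\beta$ independent of $\lambda$. This is a routine scaling computation: differentiating the denominator produces extra factors that are controlled using the lower bound $|\langle A\xi,\xi\rangle+\lambda\Sigma|\gtrsim|\xi|^2+|\lambda|$ together with $|\partial^\beta\langle A\xi,\xi\rangle|\lesssim|\xi|^{2-|\beta|}$. A clean way to organize it is to substitute $\xi = |\lambda|^{1/2}\eta$, reducing to the case $|\lambda|=1$ where the symbols are smooth, bounded, and have bounded derivatives uniformly in $\theta$ (using only $|\Sigma|\le\Lambda$, $\dist(\theta,\pi\mZ)\ge\eps_0$), and then noting that the Mikhlin constant is scale-invariant. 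Applying the Mikhlin–H\"ormander multiplier theorem then yields \eqref{lem1-st1-1} and, when $G=0$, \eqref{lem1-st1-2}. Existence and uniqueness of $u\in W^{1,p}(\mR^d)$ follow because the symbol is bounded away from zero, so the solution operator is well-defined on Schwartz functions and extends by density, uniqueness following since a $W^{1,p}$ solution of the homogeneous equation has Fourier transform supported where $m(\xi)=0$, i.e. is zero.

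I expect the main obstacle to be purely bookkeeping: checking the Mikhlin derivative bounds uniformly in $\lambda$ (equivalently, uniformly in $\theta$ on $\{\dist(\theta,\pi\mZ)\ge\eps_0\}$). Once the uniform lower bound on the symbol is in place, everything else is a standard Fourier-multiplier argument, and the fact that we want the estimate with the sharp powers of $|\lambda|$ (namely $|\lambda|^{1/2}$ on $\nabla u$ and $|\lambda|$ on $u$) is exactly encoded in the homogeneity of the substitution $\xi\mapsto|\lambda|^{1/2}\eta$. The anisotropy of $A$ causes no difficulty here since we only use its ellipticity bounds \eqref{lem1-ellipticiy}, not any structure.
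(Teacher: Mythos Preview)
Your proposal is correct and follows essentially the same approach as the paper: take the Fourier transform, write $\hat u(\xi)=-(\hat g+i\xi\cdot\hat G)/(\langle A\xi,\xi\rangle+\lambda\Sigma)$, and apply the Mikhlin--H\"ormander multiplier theorem after checking that the relevant symbols satisfy the derivative bounds $|\xi|^{|\beta|}|\partial^\beta m(\xi)|\le C$ uniformly in $\lambda$. Your write-up is in fact more explicit than the paper's (you spell out the key lower bound $|\langle A\xi,\xi\rangle+\lambda\Sigma|\gtrsim |\xi|^2+|\lambda|$ and the scaling reduction $\xi\mapsto|\lambda|^{1/2}\eta$), but there is no substantive difference in method.
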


Here and in what follows, for two $d \times d$ symmetric matrices $M_1$ and $M_2$, we denote $M_1 \ge M_2$ (resp. $M_1 \le M_2$) if $\langle M_1 \xi, \xi \rangle \ge \langle M_2 \xi, \xi \rangle$ (resp. $\langle M_1 \xi, \xi \rangle \le \langle M_2 \xi, \xi \rangle$)  for all $\xi \in \mR^d$. 

\begin{proof} For an appropriate function/vector field  $f$ defined in $\mR^d$, let $\cF f$ denote its Fourier transform.  We have 
\begin{equation*}
\cF u(\xi) = - \frac{\cF g (\xi) + i \xi \cdot \cF G (\xi)}{ \langle A \xi, \xi \rangle + \lambda \Sigma }. 
\end{equation*}
Set 
$$
m(\xi) = \frac{1}{\langle A \xi, \xi \rangle + \lambda \Sigma}. 
$$
One can check that 
$$
|\xi|^\ell |\nabla^\ell m(\xi)| \le C_\ell |\lambda|^{-1} \mbox{ for } \ell \in \mathbb{N}. 
$$
It follows from Mikhlin-H\"ormander's multiplier theorem, see e.g. \cite[Theorem 5.2.7]{Grafakos08}, that 
\begin{equation*}
\| u \|_{L^p(\mR^d)} \le C |\lambda|^{-1} \| g\|_{L^p(\mR^d)}. 
\end{equation*}
The other estimates in Assertion~\eqref{lem1-st1-1} and \eqref{lem1-st1-2}  can be derived in the same manner. The proof is complete. 
\end{proof}

Here is a result on a half space. 

\begin{lemma}\label{lem2} Let $A_1,A_2$ be two  constant, symmetric  matrices,  and let $\Sigma_1,\Sigma_2$ be two non-zero, real constants. 
Assume that, for some $\Lambda \ge 1$,  			 
\begin{equation}\label{cond-0-sect1}
\Lambda^{-1} \le A_1, A_2 \le \Lambda, \quad \Lambda^{-1} \le |\Sigma_1|, |\Sigma_2| \le \Lambda, 
\end{equation}
	\begin{equation}\label{cond-1-sec1}
	\Big|\langle A_2 e_d, e_d \rangle \langle A_2 \xi, \xi \rangle  - \langle A_2 e_d,  \xi \rangle^2  - \langle A_1 e_d, e_d \rangle \langle A_1 \xi, \xi \rangle  - \langle A_1 e_d, \xi \rangle^2 \Big| \ge \Lambda^{-1} |\xi|^2 \quad \forall \, \xi \in \cP, 
	\end{equation}
	where $
\cP=	\big\{\xi \in \mR^d; \langle \xi, e_d \rangle = 0  \big\}, $
	and 
	\begin{equation}\label{cond-2-sec1}	
	\Big|\big\langle  A_2 e_d, e_d \big\rangle \Sigma_2  -  \big\langle  A_1 e_d, e_d \big\rangle \Sigma_1 \Big| \ge \Lambda^{-1}.
	\end{equation}
	 Let $p > 1$,  $\eps_0 > 0$, $g_1, g_2   \in L^p(\mR^d_+)$,  $G_1, G_2   \in [L^p(\mR^d_+)]^d$, and $\varphi \in W^{1-1/p, p}(\mR^d_0)$. 
	 There exist two positive constants $C$ and $\Lambda_0$ depending only on $\Lambda$ and $\eps_0$ such that for $\theta \in \mR$ with $\min_{n \in \mZ} |\theta - n \pi| \ge \eps_0$ and for $\lambda \in \cL(\theta, \Lambda_0)$, there exists a unique solution   $ u = (u_1,u_2) \in [W^{1, p}(\mR^d_+)]^2$ of  the system
	\begin{equation}\label{pro3}
	\left\{ \begin{array}{cl}
	\dive(A_1 \nabla v_1) -\lambda \Sigma_1 v_1=g_1+ \dive(G_1)~~&\text{ in}~\mathbb{R}_+^d,\\[6pt]
	\dive(A_2 \nabla v_2) -\lambda  \Sigma_2 v_2= g_2+ \dive(G_2)~~&\text{ in}~\mathbb{R}_+^d,\\[6pt]
	v_1-v_2= \varphi,~~ (A_1 \nabla v_1 - G_1)\cdot e_d - (A_2 \nabla v_2 - G_2)\cdot e_d = 0 & \text{ on } \mR^d_0.
	\end{array} \right. 
	\end{equation} 
	Moreover,
	\begin{multline}
	\label{lem2-st1}
	\| \nabla v \|_{L^{p}(\mathbb{R}_+^d)}+ |\lambda|^{1/2}	\| v \|_{L^{p}(\mathbb{R}_+^d)}
	\leq C\left(  |\lambda|^{-1/2}\|g\|_{L^{p}(\mathbb{R}_+^d)}+\|G\|_{L^{p}(\mathbb{R}_+^d)}  \right.  \\[6pt]+ \left.
\lambda^{1/2 - 1/ (2p)} \| \varphi \|_{L^p(\mR^d_0)} + 	
	 \|\varphi \|_{\dot W^{1-1/p, p}(\mR^d_0)}  \right). 
	\end{multline}
\end{lemma}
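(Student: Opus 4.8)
The plan is to establish \Cref{lem2} by reducing the half-space transmission problem to a constant-coefficient Fourier-multiplier problem that can be solved explicitly via a partial Fourier transform in the tangential variables $x' = (x_1, \dots, x_{d-1})$, combined with an ODE analysis in the normal variable $x_d$. First I would take the partial Fourier transform $\mathcal{F}'$ in $x'$, writing $\hat v_j(\xi', x_d)$ for the transform of $v_j$. The interior equations become, for each fixed $\xi' \in \mR^{d-1}$, a pair of linear second-order ODEs in $x_d$ with constant coefficients (depending on $\xi'$ and $\lambda$), whose characteristic roots I would locate in the open upper/lower half plane provided $\langle A_j e_d, e_d\rangle \ne 0$ (guaranteed by ellipticity) and $\lambda$ avoids the negative real axis — this is where $\inf_n |\theta - n\pi| \ge \eps_0$ enters, ensuring $\langle A_j\xi,\xi\rangle + \lambda\Sigma_j$ stays bounded away from $0$ uniformly. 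The decaying solutions on $\mR^d_+$ then form a one-parameter family for each $j$; imposing the two transmission conditions on $\mR^d_0$ gives a $2\times 2$ linear system whose determinant is, up to nonvanishing factors, controlled below by the quantity in \eqref{cond-1-sec1} when $|\xi'|$ is large and by the quantity in \eqref{cond-2-sec1} when $|\xi'|$ is small (after suitable rescaling in $|\lambda|^{1/2}$). This is the role of the complementing condition: it is exactly the nondegeneracy of the Lopatinski--Shapiro matrix for this transmission system.

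Next I would reduce to the case $G_1 = G_2 = 0$ and $\varphi = 0$ of pure interior data by subtracting off an explicit particular solution: the whole-space solution operator of \Cref{lem1} handles $g_j + \dive(G_j)$ after extending $g_j, G_j$ by zero (or reflection), converting the problem to one with homogeneous interior equations but inhomogeneous boundary data of the form $v_1 - v_2 = \tilde\varphi$ and a flux jump $= \tilde\psi$ on $\mR^d_0$, with $\tilde\varphi, \tilde\psi$ controlled by the right-hand side of \eqref{lem2-st1}; the $\lambda$-dependent trace estimates here are standard trace/extension inequalities with the correct powers of $|\lambda|^{1/2}$ coming from parabolic-type scaling $x_d \mapsto |\lambda|^{1/2} x_d$. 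Then the remaining problem is solved by the explicit multiplier described above, and the estimate \eqref{lem2-st1} follows from bounding the resulting Fourier multipliers and verifying they satisfy the Mikhlin--H\"ormander hypotheses uniformly in $\lambda \in \cL(\theta, \Lambda_0)$. Concretely, after the rescaling that normalizes $|\lambda| = 1$, the solution operators become classical pseudodifferential operators of order $0$ (for $\nabla v$ in terms of $\varphi \in \dot W^{1-1/p,p}$) and order $-1$ (for $v$, and for the contributions of $g$), with symbols that are smooth and homogeneous of the right degree away from $\xi' = 0$ thanks to the lower bounds on the Lopatinski determinant; I would invoke \cite[Theorem 5.2.7]{Grafakos08} as in the proof of \Cref{lem1}, together with the trace theorem on $\mR^d_0 \cong \mR^{d-1}$, to conclude $L^p$-boundedness with the stated constants. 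Uniqueness follows because any $v \in [W^{1,p}(\mR^d_+)]^2$ solving the homogeneous system has partial Fourier transform satisfying a homogeneous ODE system whose only decaying solution is zero once the boundary determinant is invertible.

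The main obstacle I anticipate is making the multiplier estimates genuinely uniform in $\lambda$ along the whole ray $\cL(\theta, \Lambda_0)$ — and more importantly uniform as $\theta$ ranges over $\{\inf_n|\theta - n\pi| \ge \eps_0\}$ — while simultaneously tracking the exact powers of $|\lambda|^{1/2}$ in every term, including the two different boundary-data norms $\lambda^{1/2 - 1/(2p)}\|\varphi\|_{L^p}$ and $\|\varphi\|_{\dot W^{1-1/p,p}}$. The delicate point is the transition region in frequency where $|\xi'| \sim |\lambda|^{1/2}$: here one cannot use a single homogeneity, and one must check that the Lopatinski determinant $\Delta(\xi',\lambda)$ stays bounded below by a positive constant times $\max(|\xi'|, |\lambda|^{1/2})^{\text{(appropriate power)}}$ uniformly, interpolating between the regime governed by \eqref{cond-1-sec1} (high tangential frequency) and the regime governed by \eqref{cond-2-sec1} (low tangential frequency, where the zeroth-order terms $\lambda\Sigma_j$ dominate). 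Verifying the symbol bounds $|\partial_{\xi'}^\alpha m(\xi',\lambda)| \lesssim |\xi'|^{-|\alpha|} \times (\text{order})$ for all multi-indices $\alpha$ in this transition zone, uniformly in $\lambda$, is the technical heart of the argument; once that is in place, the rest is bookkeeping with Mikhlin--H\"ormander and trace estimates.
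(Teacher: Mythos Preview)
Your proposal is correct and follows essentially the same route as the paper: reduce to homogeneous interior data via the whole-space solver of \Cref{lem1} (the paper extends $g_j, G_j$ by zero, which makes the flux jump $\tilde\psi$ automatically vanish, so only a Dirichlet jump $\tilde\varphi$ remains), then take the partial Fourier transform in $x'$, solve the resulting constant-coefficient ODEs explicitly, and control the solution operators as Mikhlin--H\"ormander multipliers uniformly in $\lambda$. The paper carries out exactly the explicit computation you sketch, writing $\hat v_j(\xi',t)=\alpha_j(\xi')e^{\eta_j(\xi')t}$ and solving the $2\times2$ boundary system for $\alpha_j$; the specific device it uses to separate the two boundary norms is to compare $v_1$ with the auxiliary function $\Phi(x)=\varphi(x')e^{-|\lambda|^{1/2}x_d}\mathds{1}_{x_d\ge 0}$ (giving the $|\lambda|^{1/2-1/(2p)}\|\varphi\|_{L^p}$ term) and to split $\eta_1$ into a $\lambda$-dominated piece and a $\xi'$-dominated piece, the latter handled via an elliptic extension estimate from \cite{ADNI} (giving the $\|\varphi\|_{\dot W^{1-1/p,p}}$ term) --- precisely the transition-region bookkeeping you flag as the main obstacle.
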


\begin{proof} We only establish \eqref{lem2-st1}. The uniqueness for \eqref{pro3} is a consequence of \eqref{lem2-st1}. The existence of $(v_1, v_2)$ follows from the proof of \eqref{lem2-st1} and is omitted.   Let $u_j \in W^{1, p}(\mR^d)$ be the unique solution of the equation 
$$
\dive (A_j \nabla u_j) - \lambda \Sigma_j u_j = g_j \mathds{1}_{\mR^d_+} + \dive (G_j \mathds{1}_{\mR^d_+}) \mbox{ in } \mR^d. 
$$
It follows from  \Cref{lem1} that 
$$
\| \nabla u_j \|_{L^p(\mR^d)} + |\lambda|^{1/2} \| u_j \|_{L^p(\mR^d)} \le C \Big( |\lambda|^{-1/2} \|  g_j\|_{L^p (\mR^d_+) } +  \|G_j \|_{L^p(\mR^d_+)}  \Big). 
$$
We have
$$
|\lambda|^{1/2 - 1/(2p)} \| u_j\|_{L^p(\mR^d_0)} \le C \Big(  \| \nabla u_j \|_{L^p(\mR^d_+)} + |\lambda|^{1/2} \| u_j \|_{L^p(\mR^d_+)}   \Big), 
$$ 
$$
(A_j \nabla  u_j - G_j) \cdot e_d = 0 \mbox{ on } \mR^d_0, 
$$
and, by the trace theory,  
$$
\| u_j\|_{W^{1 - 1/p, p}(\mR^d_0)} \le C \| u_j \|_{W^{1, p}(\mR^d_+)}. 
$$ 
Therefore, without loss of generality,  one might assume that  $g_1=g_2 = 0$ and $G_1=G_2 = 0$. This will be assumed from now on. 

Let $\hat v_j(\xi', t)$ for $j =1, 2$ and $\hat \varphi(\xi', t)$ be the Fourier transform of $v_j$ and $\varphi$ with respect to $x' \in \mR^{d-1}$, i.e., for $(\xi', t) \in \mR^{d-1} \times (0, + \infty)$, 
\begin{equation*}
\hat v_j(\xi', t) = \int_{\mR^{d-1}} v_j(x', t) e^{-i x' \cdot \xi' }\, dx' \quad  \mbox{ for } j=1, 2, \quad \mbox{ and } \quad  
\hat \varphi(\xi', t) = \int_{\mR^{d-1}} \varphi(x') e^{-i x' \cdot \xi' }\, dx'. 
\end{equation*}
Since 
\begin{equation*}
\dive(A_j \nabla v_j ) -  \lambda \Sigma_j v_j = 0 \mbox{ in } \mR^{d}_+,   
\end{equation*}
it follows that 
\begin{equation*}
a_j v_j''(t) + 2 i b_j v_j'(t) - (c_j + \lambda \Sigma_j) v_j(t) = 0 \mbox{ for } t > 0,  
\end{equation*}
where 
\begin{equation*}
a_j  = (A_j)_{d,d}, \quad b_j =  \sum_{k=1}^{d-1} (A_j)_{d, k} \xi_k, \quad \mbox{ and } \quad c_j = \sum_{k=1}^{d-1} \sum_{l =1}^{d-1} (A_j)_{k, l} \xi_k \xi_l.  
\end{equation*}
Here $(A_j)_{k, l}$ denotes the $(k, l)$ component of $A_j$ for $j=1, 2$ and the symmetry of $A_j$ is used. 
Define, for $j=1, 2$,  
\begin{equation}\label{def-Deltaj-v4}
\Delta_j = - b_j^2 +  a_j (c_j + \lambda \Sigma_j). 
\end{equation}
Denote $\xi = (\xi', 0)$.  Since $A_j$ is symmetric and positive, it is clear that, for $j=1, 2$,  
\begin{equation}\label{est1-C-v4-0}
a_j = \langle A_j e_d, e_d \rangle, \; \;  b_j = \langle A_j \xi, e_d \rangle,    \; \; c_j = \langle A_j \xi, \xi \rangle, \; \;   \mbox{ and }  \; \;  a_j c_j - b_j^2  > 0.  
\end{equation}
Since $\hat v_j(\xi', t) \in L^2(\mR^d_+)$, we have 
\begin{equation*}
\quad \hat v_j(\xi', t) = \alpha_j (\xi') e^{\eta_j (\xi') t}, 
\end{equation*}
for some $\alpha_j(\xi') \in \mC$,  where
\begin{equation*}
\eta_j  =( - i b_j - \sqrt{\Delta_j})/ a_j. 
\end{equation*}
Here $\sqrt{\Delta_j}$ denotes the square root of $\Delta_j$ with positive real part. 
Using the fact that $v_1 - v_2=\varphi $ and $A_1 \nabla v_1 \cdot e_d - A_2 \nabla v_2 \cdot e_d = 0$ on $\mR^d_0$, we derive that 
\begin{equation}\label{equation-C-v4}
\alpha_1  (\xi' ) - \alpha_2 (\xi') = \hat \varphi(\xi')  \quad \mbox{ and } \quad \alpha_1 (\xi') \langle i A_1 \xi + \eta_1 A_1 e_d, e_d \rangle  - \alpha_2 (\xi') 
 \langle i A_2 \xi + \eta_2 A_2 e_d, e_d \rangle  = 0. 
\end{equation}
Note that, by \eqref{est1-C-v4-0}, 
$$
\langle A_j \xi, e_d \rangle - \langle A_j e_d, e_d \rangle  b_j/ a_j = 0. 
$$
The last identity of \eqref{equation-C-v4} is equivalent to
\begin{equation*}
\alpha_1(\xi') \sqrt{\Delta_1} = \alpha_2(\xi') \sqrt{\Delta_2}. 
\end{equation*}
Combining this identity and the first one of \eqref{equation-C-v4} yields 
\begin{equation}\label{def-alphaj-v4}
\alpha_1(\xi') = \frac{\hat \varphi(\xi') \sqrt{\Delta_2}}{\sqrt{\Delta_2} - \sqrt{\Delta_1}}. 
\end{equation}

Extend $v_1(x', t)$ by 0 for  $t < 0$. 
We then obtain 
$$
\cF v_1(\xi) =  - \hat \varphi (\xi') \frac{\sqrt{\Delta_2}}{\sqrt{\Delta_2} - \sqrt{\Delta_1}} \frac{1}{\eta_j - i \xi_d}. 
$$
Here, $\cF$ is the Fourier transform in $\mathbb{R}^d$. 
Set 
$$
g(t) = e^{-|\lambda|^{1/2} t} \mathds{1}_{t \ge 0}  \mbox{ for } t \in \mR \quad \mbox{ and } \quad   \Phi (x) = \varphi(x') g(x_d) \mbox{ for } x \in \mR^d. 
$$
It follows that 
$$
\cF v_1 (\xi) = \cF \Phi (\xi) \frac{\sqrt{\Delta_2}}{\sqrt{\Delta_2} - \sqrt{\Delta_1}} \frac{|\lambda| ^{1/2}+ i \xi_d}{-\eta_j + i \xi_d}.
$$
We have 
\begin{equation*}
|\Delta_2 - \Delta_1|^2 \ge C (|\xi'|^4 + |\lambda|^2),  \quad 
|\Delta_j| \le C (|\xi'|^2 +|\lambda|), 
\end{equation*}
and  
\begin{equation*}
|\Re (\eta_j)| \ge C(|\xi'| + |\lambda|^{1/2}). 
\end{equation*}
As in the proof of \Cref{lem1}, by Mikhlin-H\"ormander's multiplier theorem, see e.g. \cite[Theorem 5.2.7]{Grafakos08}, one has 
\begin{equation}\label{lem2-p1}
\| v_1\|_{L^p(\mR^d)} \le C \| \Phi\|_{L^p(\mR^d)} \le C |\lambda|^{-1/(2p)} \| \varphi\|_{L^p(\mR^{d-1})}. 
\end{equation}

We next deal with $\nabla v_1$. We have 
\begin{equation}
\partial_{t} \hv_1 (\xi', t) = \eta_1 \hv_1(\xi', t) \mbox{ in } \mR^d_+. 
\end{equation}
It is clear that 
$$
\eta_1  = \eta_{1, 1} + \eta_{1,2}, 
$$
where 
$$
\eta_{1, 1} =  -  \frac{\sqrt{a_1 \lambda \Sigma_1}}{a_1}, \quad \mbox{ and } \quad \eta_{1, 2} =  - \frac{i b_1}{a_1}     - \frac{\sqrt{\Delta_1} - \sqrt{a_1 \lambda \Sigma_1}}{a_1}. 
$$
As above, one can prove that 
\begin{equation}\label{lem2-p2-1}
\| \hat \cF^{-1}(\eta_{1, 1}  \hv_1  )\|_{L^p (\mR^d_+)} \le C |\lambda|^{1/2} \| \Phi\|_{L^p(\mR^d)} \le C |\lambda|^{1/2-1/(2p)} \| \varphi\|_{L^p(\mR^{d-1})},  
\end{equation}
and, for some $\gamma > 0$, 
$$
\| \hat \cF^{-1}(\eta_{1, 2}   \hv_1  )\|_{L^p (\mR^d_+)} \le C \| g \|_{L^p(\mR^d_+)}, 
$$
where $\hat \cF^{-1}$ denotes the Fourier inverse with respect to $ \xi' $ in $\mathbb{R}^{d-1}$ and 
$$
\hat g(\xi', t) = i \xi' \hat \varphi(\xi') e^{ - \gamma |\xi'| t}. 
$$
It is clear that $g(x) = \nabla_{x'} v (x)$,  where $v$ is the unique solution of the system  
$$
\Delta_{x'} v + \gamma \partial^2_{x_d} v = 0 \mbox{ in } \mR^d_+ \quad \mbox{ and } \quad v = \varphi \mbox{ on } \mR^d_0
$$
for $\gamma>0$. 
It follows that, see e.g. \cite[Theorem 3.3]{ADNI}, we have 
\begin{equation}\label{lem2-p2-2}
\| g \|_{L^p(\mR^d_+)} \le C \| \varphi\|_{\dot W^{1/p-1, p}(\mR^{d-1})}. 
\end{equation}
Combining \eqref{lem2-p2-1} and \eqref{lem2-p2-2} yields 
\begin{equation}\label{lem2-p2}
\| \partial_{x_d} v_1 \|_{L^p(\mR^d_+)} \le C \| \varphi\|_{\dot W^{1/p-1, p}(\mR^{d-1})}  + C |\lambda|^{1/2-1/(2p)} \| \varphi\|_{L^p(\mR^{d-1})}. 
\end{equation}
By the same manner, we also obtain 
\begin{equation}\label{lem2-p3}
\| \nabla_{x'} v_1 \|_{L^p(\mR^d_+)} \le C \| \varphi\|_{\dot W^{1/p-1, p}(\mR^{d-1})}. 
\end{equation}
From  \eqref{lem2-p1}, \eqref{lem2-p2}, and \eqref{lem2-p3}, we obtain 
\begin{equation}\label{lem2-p4}
|\lambda|^{1/2} \| v_1 \|_{L^p(\mR^d_+)} + \| \nabla_{x} v_1 \|_{L^p(\mR^d_+)} \le C \| \varphi\|_{\dot W^{1/p-1, p}(\mR^{d-1})} + C |\lambda|^{1/2-1/(2p)} \| \varphi\|_{L^p(\mR^{d-1})}.
\end{equation}

Similar to \eqref{lem2-p4}, we also get 
\begin{equation}\label{lem2-p5}
|\lambda|^{1/2} \| v_2 \|_{L^p(\mR^d_+)} + \| \nabla_{x} v_2 \|_{L^p(\mR^d_+)} \le C \| \varphi\|_{\dot W^{1/p-1, p}(\mR^{d-1})} + C |\lambda|^{1/2-1/(2p)} \| \varphi\|_{L^p(\mR^{d-1})}.
\end{equation}
The conclusion thus follows from \eqref{lem2-p4} and \eqref{lem2-p5}. The proof is complete. 
\end{proof}

\begin{remark} \rm 
Assertion~\eqref{pro3} was previously established in \cite{MinhHung1} for $p=2$ (see \cite[the proof of Theorem 4]{MinhHung1}). The analysis given here has its root in \cite{Ng-WP, MinhHung1}. Nevertheless, instead of using Parseval's theorem to derive $L^2$-estimates,  the new ingredient involves Mikhlin- H\"ormander's multiplier theory. 
\end{remark}

We now derive consequences of \Cref{lem1,lem2} via the freezing-coefficient  technique. 
As a consequence of \Cref{lem1}, we have 
\begin{corollary} \label{cor1} Let $d \ge 2$, $p>1$,  $\Lambda \ge 1$, $\eps_0 > 0$,  and let $A$ be a symmetric, matrix-valued function,  and let $\Sigma$ be a  real function defined in $\Omega$. Assume that $A$ and $\Sigma$ are continuous in $\bar \Omega$,  
\begin{equation}\label{cor1-AS}
\Lambda^{-1} \le A \le \Lambda \quad \mbox{ and } \quad \Lambda^{-1} \le |\Sigma| \le \Lambda \mbox{ in } \Omega, 
\end{equation}
for some $\Lambda \ge 1$,   $\inf_{n \in \mZ} |\theta - n \pi| \ge \eps_0$, and  $\lambda \in \cL(\theta, 1)$.  For $g \in L^p(\Omega)$ and $G \in [L^p(\Omega)]^d$, let  $u \in W^{1, p}(\Omega)$ be a solution of 
$$
\dive (A \nabla u) - \lambda \Sigma u = g + \dive (G) \mbox{ in } \Omega.  
$$
We have, for  $\tau > 0$, 
\begin{multline}
\| \nabla u \|_{L^p(\Omega \setminus \Omega_\tau)} + |\lambda|^{1/2} \| u \|_{L^p(\Omega \setminus \Omega_\tau)}    \\[6pt]
\le C \Big( |\lambda|^{-1/2} \| g\|_{L^p(\Omega)} +  \| G\|_{L^p(\Omega)} \Big) + C |\lambda|^{- \frac{1}{2}} \Big( \| \nabla u \|_{L^p(\Omega)} +  |\lambda|^{1/2} \| u \|_{L^p(\Omega)}  \Big). 
\end{multline}
Here $C$ denotes a positive constant depending only on $\Lambda$, $p$, $\eps_0$,  $\tau$, $\Omega$, and  the continuity modulus of $A_1$, $A_2$, $\Sigma_1$, and $\Sigma_2$ in $\overline{\Omega}$.
\end{corollary}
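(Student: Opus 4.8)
\textbf{Proof proposal for Corollary~\ref{cor1}.}

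The plan is to use the standard freezing-coefficient (partition of unity) argument, reducing the interior estimate away from $\Gamma$ to the constant-coefficient whole-space estimate of \Cref{lem1}. First I would fix a point $x_0 \in \overline{\Omega \setminus \Omega_\tau}$ and freeze the coefficients there, writing $A_0 = A(x_0)$, $\Sigma_0 = \Sigma(x_0)$, so that
\begin{equation*}
\dive(A_0 \nabla u) - \lambda \Sigma_0 u = g + \dive(G) + \dive\big( (A_0 - A)\nabla u \big) - \lambda (\Sigma_0 - \Sigma) u \mbox{ in } \Omega.
\end{equation*}
Choosing a cutoff function $\chi \in C_c^\infty$ supported in a ball $B_{2\rho}(x_0)$, with $\chi \equiv 1$ on $B_\rho(x_0)$, and with $B_{2\rho}(x_0) \subset \Omega$ (possible since $x_0$ stays a fixed distance $\tau$ from $\Gamma$, up to shrinking to $\tau/2$ and covering), the product $\chi u$ extends by zero to all of $\mR^d$ and solves
\begin{equation*}
\dive(A_0 \nabla (\chi u)) - \lambda \Sigma_0 (\chi u) = \chi g + \dive(\chi G) + \dive\big( \chi (A_0 - A)\nabla u \big) + \widetilde G + \widetilde g \mbox{ in } \mR^d,
\end{equation*}
where $\widetilde G$ and $\widetilde g$ collect the commutator terms involving $\nabla \chi$ and $\nabla^2 \chi$ applied to $u$ (and the zeroth-order term $\lambda(\Sigma_0 - \Sigma)\chi u$), each of which is controlled in $L^p$ by $\|u\|_{L^p(\Omega)}$, $\|\nabla u\|_{L^p(\Omega)}$, and $|G|$, $|g|$ on $B_{2\rho}(x_0)$.

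Next I would apply \eqref{lem1-st1-1} of \Cref{lem1} to $\chi u$. The key point is the treatment of the term $\dive(\chi(A_0 - A)\nabla u)$: by uniform continuity of $A$ on $\overline{\Omega}$, given any $\delta > 0$ one can choose $\rho$ depending only on $\delta$ and the continuity modulus of $A$ so that $\|\chi(A_0 - A)\nabla u\|_{L^p} \le \delta \|\nabla u\|_{L^p(B_{2\rho}(x_0))}$. In \eqref{lem1-st1-1} this term enters with the favorable weight $|\lambda|^{1/2}$ on the left and $|\lambda|^{1/2}$ on the right, so it can be absorbed: after multiplying through, $|\lambda|^{1/2}\|\nabla(\chi u)\|_{L^p} + |\lambda|\|\chi u\|_{L^p}$ is bounded by $C(\|g\|_{L^p} + |\lambda|^{1/2}\|G\|_{L^p} + |\lambda|^{1/2}\delta\|\nabla u\|_{L^p(B_{2\rho})} + \text{commutator terms})$. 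Similarly the zeroth-order perturbation $\lambda(\Sigma_0 - \Sigma)\chi u$ is handled by continuity of $\Sigma$, contributing $|\lambda|\,\delta\,\|u\|_{L^p}$ on the right, again absorbable into the left side for $\delta$ small. The commutator terms, which have no gain in $\lambda$, are exactly the source of the final error term $C|\lambda|^{-1/2}(\|\nabla u\|_{L^p(\Omega)} + |\lambda|^{1/2}\|u\|_{L^p(\Omega)})$ after dividing the inequality by $|\lambda|^{1/2}$: on $B_\rho(x_0)$ one has $\|\nabla u\|_{L^p(B_\rho)} + |\lambda|^{1/2}\|u\|_{L^p(B_\rho)} \le |\lambda|^{1/2}\|\nabla(\chi u)\|_{L^p(\mR^d)} + |\lambda|\|\chi u\|_{L^p(\mR^d)}$ divided by $|\lambda|^{1/2}$, and the right side's non-gaining terms are precisely $C|\lambda|^{-1/2}$ times $\|g\|_{L^p}$ (which is $\le |\lambda|^{-1/2}\|g\|$ anyway), $\|G\|_{L^p}$, and $\|\nabla u\|_{L^p} + |\lambda|^{1/2}\|u\|_{L^p}$.

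Finally I would patch the local estimates together: cover the compact set $\overline{\Omega \setminus \Omega_\tau}$ (or $\overline{\Omega \setminus \Omega_{\tau/2}}$, which still controls $\Omega\setminus\Omega_\tau$) by finitely many balls $B_\rho(x_i)$, $i = 1, \dots, M$, with $M$ depending only on $\tau$, $\rho$, and $\Omega$; sum the $p$-th powers of the local estimates; and use that the overlaps have bounded multiplicity. This yields exactly the claimed inequality, with $C$ depending on $\Lambda$, $p$, $\eps_0$, $\tau$, $\Omega$, and the continuity moduli of $A$ and $\Sigma$ (which fix the admissible radius $\rho$ and hence $M$). The main obstacle, and the only genuinely delicate point, is the bookkeeping of the powers of $\lambda$: one must verify that every perturbative term generated by freezing the coefficients either carries a compensating positive power of $|\lambda|^{1/2}$ (so it can be absorbed on the left using the two-term left-hand side of \eqref{lem1-st1-1}) or else lands in the $|\lambda|^{-1/2}$-weighted error term on the right; this is why the sharp $\lambda$-dependence in \Cref{lem1} is essential and why one cannot simply quote a classical elliptic Schauder/$L^p$ estimate.
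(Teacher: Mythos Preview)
Your proposal is correct and follows essentially the same approach as the paper: localize by a smooth cutoff supported compactly in $\Omega$, pass the equation to $\mR^d$, freeze the coefficients at the center, and invoke \Cref{lem1}; the paper's proof is in fact only a two-line sketch of exactly this (it writes down the equation for $v=\chi u$ and then says ``the conclusion follows from \Cref{lem1} by the freezing-coefficient technique''), so your write-up is considerably more detailed than what appears there. One small point worth tightening: when you say the term $\chi(A_0-A)\nabla u$ ``can be absorbed'', note that at the local level the left-hand side controls $\nabla(\chi u)$, not $\nabla u$ on $B_{2\rho}$; the clean way to absorb is to rewrite $\chi(A_0-A)\nabla u=(A_0-A)\nabla(\chi u)-(A_0-A)\,u\,\nabla\chi$, so that the first piece is genuinely $\le\delta\|\nabla(\chi u)\|_{L^p}$ and goes to the left, while the second piece is a commutator term that lands in the $|\lambda|^{-1/2}$-error.
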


\begin{proof} Let $\chi$ be an arbitrary smooth function with support in $\Omega$. Set $v = \chi u $ in $\Omega$. We have 
$$
\dive (A \nabla v) - \lambda \Sigma v = f + \dive F \mbox{ in } \Omega, 
$$
where 
$$
f = \chi g + A \nabla u \nabla \chi - F \cdot \nabla \chi  \quad \mbox{ and } \quad F = \chi F +  u A \nabla \varphi. 
$$
The conclusion follows from \Cref{lem1} by the freezing-coefficient technique and the computations above. 
\end{proof}

Similarly, as a consequence of \Cref{lem2}, we obtain 

\begin{corollary}\label{cor2}
Let $d \ge 2$, $p>1$, $\eps_0>0$, $\tau > 0$,  and  $\Lambda \ge 1$,  and let $A_1, \, A_2$ be two symmetric, matrix-valued functions,  and let $\Sigma_1, \, \Sigma_2$ be two  real functions defined in $\Omega$.  Assume that  $A_1,\, A_2, \, \Sigma_1, \, \Sigma_2$ are  continuous in $\overline{\Omega}_{2\tau}$,  \eqref{cor1-AS} holds, and \eqref{cond1} and \eqref{cond2} are satisfied. There exist two positive constants $\Lambda_0$ and $C$,  depending only on $\Lambda$, $\eps_0$, and the continuity of $A_1$, $A_2$, $\Sigma_1$, and $\Sigma_2$ in $\overline{\Omega}_{2\tau}$ such that for $\theta \in \mR$, for  $\lambda \in \mathcal{L}(\theta, \Lambda_0)$  with $\inf_{n \in \mathbb{Z}}|\theta- n \pi|\geq \eps_0$, and for $g= (g_1,g_2)\in [L^p(\Omega)]^2$,  and $G = (G_1, G_2) \in [L^p(\Omega)]^d \times [L^p(\Omega)]^d $,  let   $u=(u_1,u_2)\in [W^{1, p}(\Omega)]^2$ be a solution of the system 
	\begin{equation}\label{pro1-sec1}
		\left\{ \begin{array}{cl}
			\dive (A_1\nabla u_1) -\lambda\Sigma_1 u_1=g_1 + \dive (G_1) ~~&\text{ in}~\Omega,\\[6pt]
			\dive (A_2\nabla u_2) -\lambda \Sigma_2 u_2= g_2 + \dive (G_2)~~&\text{ in}~\Omega,\\[6pt]
			u_2-u_1= 0,~~ (A_2\nabla u_2-A_1\nabla u_1 - G_2 + G_1) \cdot \nu =0 & \text{ on } \Gamma. 
		\end{array} \right. 
	\end{equation}
Moreover, we have
	\begin{multline}
 \| \nabla v \|_{L^p( \Omega_\tau)} + |\lambda|^{1/2} \| v \|_{L^p(\Omega_\tau)}  \\[6pt]
\le C \Big( |\lambda|^{-1/2} \| g\|_{L^p(\Omega)} +  \| G\|_{L^p(\Omega)} \Big) + C |\lambda|^{- \frac{1}{2}} \Big( \| \nabla v \|_{L^p(\Omega)} + |\lambda|^{1/2} \| v \|_{L^p(\Omega)}  \Big).  
\end{multline}
Here $C$ denotes a positive constant depending only on $\Lambda$, $p$, $\eps_0$,  $\tau$, $\Omega$, and  the continuity modulus of $A_1$, $A_2$, $\Sigma_1$, and $\Sigma_2$ in $\overline{\Omega}_{2\tau}$.
\end{corollary}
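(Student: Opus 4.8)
The plan is to run the partition-of-unity and freezing-coefficient argument behind \Cref{cor1}, but localizing near $\Gamma$ and invoking \Cref{lem2} where \Cref{cor1} uses \Cref{lem1}. Fix $r_0\in(0,\tau/8)$ (to be chosen), cover $\overline{\Omega_\tau}$ by finitely many balls $B_{r_0}(x_i)$ with bounded overlap, where $x_i\in\Gamma$ whenever $B_{r_0}(x_i)\cap\Omega_{2r_0}\neq\emptyset$ and $\dist(x_i,\Gamma)\ge r_0$ otherwise, and take a subordinate partition of unity $\{\psi_i\}$ with $\sum_i\psi_i=1$ on a neighbourhood of $\overline{\Omega_\tau}$ and $\|\nabla\psi_i\|_{\infty}\le C/r_0$. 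Write $\omega$ for a common modulus of continuity of $A_1,A_2,\Sigma_1,\Sigma_2$ in $\overline{\Omega}_{2\tau}$ and set $B_i:=B_{r_0}(x_i)\cap\Omega$. For an interior ball the two equations of \eqref{pro1-sec1} decouple; extending $\psi_i u_j$ by zero to $\mR^d$, computing the equation for $\psi_i u_j$, and freezing the coefficients at $x_i$ exactly as for \Cref{cor1}, \Cref{lem1} gives for $j=1,2$
\[
\|\nabla(\psi_i u_j)\|_{L^p}+|\lambda|^{1/2}\|\psi_i u_j\|_{L^p}\le C\omega(r_0)\big(\|\nabla(\psi_i u_j)\|_{L^p}+|\lambda|^{1/2}\|\psi_i u_j\|_{L^p}\big)+|\lambda|^{-1/2}\|g_j\|_{L^p(B_i)}+\tfrac{C}{r_0}|\lambda|^{-1/2}\|\nabla u_j\|_{L^p(B_i)}+C\|G_j\|_{L^p(B_i)}+\tfrac{C}{r_0}\|u_j\|_{L^p(B_i)},
\]
the freezing error multiplying precisely the left-hand quantity.

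For a boundary ball I would first upgrade \eqref{cond1}--\eqref{cond2}, using the continuity of the coefficients and the compactness of $\Gamma$, to the quantitative forms \eqref{cond1-thm2}--\eqref{cond2-thm2} for some $\Lambda$. Then flatten: pick a $C^2$-diffeomorphism $\Phi_i$ from $B_{r_0}(x_i)$ onto a neighbourhood of $0$ with $\Phi_i(x_i)=0$, $\Phi_i(\Gamma\cap B_{r_0}(x_i))\subset\mR^d_0$, $\Phi_i(\Omega\cap B_{r_0}(x_i))\subset\mR^d_+$, $D\Phi_i(x_i)\nu(x_i)$ a positive multiple of $e_d$, and $C^2$ norms of $\Phi_i^{\pm1}$ bounded in terms of $\Omega$. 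Multiplying \eqref{pro1-sec1} by $\psi_i$ and pushing forward produces, after extending the data by zero, a system of the form \eqref{pro3} on $\mR^d_+$ for $w=(w_1,w_2)$, $w_j=(\psi_i u_j)\circ\Phi_i^{-1}$, with continuous coefficients $\widehat A_j,\widehat\Sigma_j$, with $\varphi=0$, and with vanishing conormal jump: indeed $\psi_i(u_1-u_2)=0$, and with $G_j^i:=\psi_i G_j+u_jA_j\nabla\psi_i$ one has $(A_j\nabla(\psi_i u_j)-G_j^i)\cdot\nu=\psi_i(A_j\nabla u_j-G_j)\cdot\nu$, so the conormal jump of the localized system is $\psi_i$ times that of \eqref{pro1-sec1}, i.e. zero; moreover \eqref{cond1-thm2}--\eqref{cond2-thm2} at $x_i$ become \eqref{cond-1-sec1}--\eqref{cond-2-sec1} at $0$ (with $e_d$ for $\nu$) with a constant depending only on $\Lambda$ and $\Omega$, by the coordinate-invariance of the complementing conditions. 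Next freeze $\widehat A_j(y)\to\widehat A_j(0)$ and $\widehat\Sigma_j(y)\to\widehat\Sigma_j(0)$ on the small support of $w$: the $\widehat A_j$-error is placed into $\widehat G_j$, which keeps the conormal jump zero because $\widehat G_j$ also occurs in the flux condition, while the $\widehat\Sigma_j$-error $\lambda(\widehat\Sigma_j-\widehat\Sigma_j(0))w_j$ is placed into $\widehat g_j$. Applying \Cref{lem2} (enlarging $\Lambda_0$ if needed) and expanding $\widehat g_j,\widehat G_j$ in terms of the cutoff commutators and these freezing errors, one obtains in the pulled-back variables the same inequality as in the interior case, with $\omega(r_0)$ replaced by $C\omega(Cr_0)$ and all norms understood over $B_i$.

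With all localized inequalities in hand, I would fix $r_0$ so small that $C\omega(Cr_0)\le 1/2$ on every ball — possible by uniform continuity on $\overline{\Omega}_{2\tau}$, and this fixes $r_0$, hence also the number of balls and the constant $C/r_0$, in terms of $\Lambda,\eps_0,\tau,\Omega$ and the modulus of continuity. Absorbing the freezing term into the left-hand side of each localized inequality, summing over $i$, using the bounded overlap together with $\nabla u=\sum_i\nabla(\psi_i u)$ and $u=\sum_i\psi_i u$ on $\Omega_\tau$, and writing $C\|u\|_{L^p(\Omega)}=C|\lambda|^{-1/2}\cdot|\lambda|^{1/2}\|u\|_{L^p(\Omega)}$, one arrives at the asserted estimate. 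Existence and uniqueness require no discussion, since the statement is an a priori estimate for an arbitrary solution.

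I expect the main obstacle to be the bookkeeping for the boundary balls: one must verify that neither the cutoff nor the coefficient-freezing destroys the structure of \eqref{pro3} — in particular that the Dirichlet jump stays zero and, crucially, that the conormal jump stays zero (which is exactly what forces the $\widehat A_j$-freezing error into the flux-consistent field $\widehat G_j$ rather than into $\widehat g_j$), and that \eqref{cond1}--\eqref{cond2} survive the flattening with constants depending only on $\Lambda$ and $\Omega$, so that \Cref{lem2} genuinely applies. A secondary point is to organize all error terms so that the coefficient-freezing error multiplies exactly the quantity $\|\nabla(\psi_i u_j)\|_{L^p}+|\lambda|^{1/2}\|\psi_i u_j\|_{L^p}$ on the left (allowing local absorption once $\omega(r_0)$ is small), whereas the commutator errors are harmless because the $\nabla u$-part carries a factor $|\lambda|^{-1/2}$ and the $u$-part only the fixed constant $C/r_0$.
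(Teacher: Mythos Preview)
Your proposal is correct and follows exactly the route the paper intends: the paper does not spell out a proof of \Cref{cor2} but merely states that it is obtained ``similarly, as a consequence of \Cref{lem2}'' via the freezing-coefficient technique used for \Cref{cor1}. Your partition-of-unity localization, boundary flattening, verification that the transmission conditions survive the cutoff and the freezing (with the $A_j$-error placed in the flux-consistent field $\widehat G_j$), and final absorption/summation constitute precisely the standard execution of that technique; you have simply written out in full what the paper leaves implicit.
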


\subsection{Proof of \Cref{thm2}} We first assume the well-posedness of \eqref{pro1-sec1'} and establish \eqref{thm2-st1} - \eqref{thm2-st4}. 

It is clear that \eqref{thm2-st1} is a consequence of \Cref{cor1} and \Cref{cor2}. 

We next deal with 
\eqref{thm2-st2} and \eqref{thm2-st3}.  By Gagliardo-Nirenberg's interpolation inequalities \cite{Gagliardo59, Nirenberg59},  if $p>d$ and $u\in W^{1,p}(\Omega)$, then  $u\in C(\overline{\Omega})$ and 
\begin{equation}\label{Z1}
\|u\|_{L^\infty(\Omega)}\leq C\|u\|_{W^{1,p}(\Omega)}^{\frac{d}{p}}\|u\|_{L^p(\Omega)}^{1-\frac{d}{p}},
\end{equation}
and if $1<p<d$, 
and $u\in W^{1,p}(\Omega)$, then,   for $p\leq q<\frac{dp}{d-p}$, 
\begin{equation}\label{Z1'}
\|u\|_{L^q(\Omega)}\leq C\|u\|_{W^{1,p}(\Omega)}^{d(\frac{1}{p}-\frac{1}{q})}\|u\|_{L^p(\Omega)}^{1-d(\frac{1}{p}-\frac{1}{q})}.
\end{equation}
Assertions \eqref{thm2-st2} and \eqref{thm2-st3} now follow from \eqref{thm2-st1},  \eqref{Z1}, \eqref{Z1'}, and H\"older's inequality. 
 
 We finally establish  \eqref{thm2-st4}. Let 
 $$
 \begin{array}{cccc}
{\mathcal T}_\delta : &  [L^2(\Omega)]^2 &  \to &  [L^2(\Omega)]^2 \\[6pt]
 & g & \mapsto & u,  
 \end{array}
 $$
 where $u=(u_1, u_2) \in [H^1(\Omega)]^2$ is the unique solution of \eqref{pro1-sec1'} with $(g_1,g_2) = g$.  We have,  for $q=\frac{p}{p-1}$ and $p>d$, 
$$
\| u \|_{L^q(\Omega)} = \sup_{f \in [L^2(\Omega)]^2; \|f \|_{L^p(\Omega)} \le 1} |\langle u, f \rangle|,
$$
and 
$$
\langle u, f \rangle  =\langle \mathcal{T}_{\lambda}(g), f \rangle =   \langle g, \mathcal{T}_\lambda^* (f) \rangle.
$$
One can check that 
\begin{equation}\label{rem-AD}
\mathcal{T}_{\lambda}^*=\begin{pmatrix}1 & 0\\0 & -1 \end{pmatrix}\mathcal{T}_{\overline{\lambda }}\begin{pmatrix}1 & 0\\0 & -1 \end{pmatrix}. \end{equation}
It follows that 
$$
|\langle u, f \rangle|  \le \| g\|_{L^1(\Omega)} \|\mathcal{T}_{\bar \lambda} (f)\|_{L^\infty(\Omega)} \mathop{\le}^{\eqref{thm2-st3}} C |\lambda|^{-1+\frac{d}{2p}} \| g\|_{L^1(\Omega)} \| f \|_{L^p(\Omega)}.  
$$
Assertion \eqref{thm2-st4} follows. 

It remains to prove the well-posedness of \eqref{pro1-sec1'}. It is clear that the uniqueness of \eqref{pro1-sec1'} follows from \eqref{thm2-st1}. To establish the existence for \eqref{pro1-sec1'}, we use the principle of limiting absorption and the Fredholm theory  as in \cite[the proof of Proposition 4.2]{Cakoni-Ng20} (see also \cite{MinhHung1}). We only consider the case where $\Im(\lambda) < 0$; the other case can be proved similarly. For $\delta >0$,  by the Lax-Milgram theory, there exists a unique solution $ v_\delta = (v_{1, \delta}, v_{2, \delta}) \in [H^1(\Omega)]^2$ of the system 
\begin{equation*}
		\left\{ \begin{array}{cl}
			\dive \big( (1 - i \delta) A_1\nabla v_{1, \delta} \big) - \lambda \Sigma_1 v_{1, \delta}=g_1 ~~&\text{ in}~\Omega,\\[6pt]
			\dive \big( (1+ i \delta) A_2\nabla v_{2, \delta} \big) + \lambda \Sigma_2 v_{2, \delta} = g_2 ~~&\text{ in}~\Omega,\\[6pt]
			v_{2, \delta} - v_{1, \delta}= 0,~~ \big((1 + i \delta) A_2\nabla v_{2, \delta}- (1 - i \delta) A_1\nabla v_{1, \delta} \big) \cdot \nu =0 & \text{ on } \Gamma. 
		\end{array} \right. 
\end{equation*}
Moreover, by \Cref{cor1,cor2},  applied with $G_1 = i \delta A_1 \nabla v_{1, \delta}$ and $G_2 = - i \delta A_2 \nabla v_{2, \delta}$, we have,  for  sufficiently small $\delta$, 
$$
\| \nabla u_\delta \|_{L^2(\Omega)} +  |\lambda|^{1/2} \|  u_\delta \|_{L^2(\Omega)}  \le C |\lambda|^{-1/2} \| g\|_{L^2(\Omega)}. 
$$
By taking $\delta \to 0_+$, one derives the existence of a solution 
$ v = (v_{1}, v_{2}) \in [H^1(\Omega)]^2$ of the system, with $\hSigma_2 = - \Sigma_2$ 
\begin{equation}\label{thm2-sys-hcT}
		\left\{ \begin{array}{cl}
			\dive \big( A_1\nabla v_{1} \big) - \lambda \Sigma_1 v_{1}=g_1 ~~&\text{ in}~\Omega,\\[6pt]
			\dive \big( A_2\nabla v_{2} \big)  -  \lambda \hSigma_2 v_{2} = g_2 ~~&\text{ in}~\Omega,\\[6pt]
			v_{2} - v_{1}= 0,~~ \big(A_2\nabla v_{2} -  A_1\nabla v_{1} \big) \cdot \nu =0 & \text{ on } \Gamma, 
		\end{array} \right. 
\end{equation}
which satisfies 
\begin{equation}\label{thm2-est-hcT}
\| \nabla v \|_{L^2(\Omega)} +  |\lambda|^{1/2} \|  v  \|_{L^2(\Omega)}  \le C |\lambda|^{-1/2} \| g\|_{L^2(\Omega)}. 
\end{equation}
The uniqueness of \eqref{thm2-sys-hcT} is again a consequence of \Cref{cor1} and \Cref{cor2}. 

Define 
\begin{equation*}
\begin{array}{cccc}
\hcT : & [L^2(\Omega)]^2  & \to &  [L^2(\Omega)]^2 \\[6pt]
& g & \mapsto & v, 
\end{array}
\end{equation*}
where $v = (v_1, v_2) \in [H^1(\Omega)]^2$ is the unique solution of \eqref{thm2-sys-hcT}. It follows from \eqref{thm2-est-hcT} that  $\hcT$ is compact. 

It is clear that $u = (u_1, u_2) \in [H^1(\Omega)]^2$ is a solution of \eqref{pro1-sec1'} if and only if 
\begin{equation*}
		\left\{ \begin{array}{cl}
			\dive (A_1\nabla u_1) -\lambda\Sigma_1 u_1=g_1 ~~&\text{ in}~\Omega,\\[6pt]
			\dive (A_2\nabla u_2) - \lambda \hSigma_2 u_2= g_2 + 2 \lambda \Sigma_2 u_2~~&\text{ in}~\Omega,\\[6pt]
			u_1-u_2= 0,~~ (A_1\nabla u_1-A_2\nabla u_2) \cdot \nu =0 & \text{ on } \Gamma. 
		\end{array} \right. 
	\end{equation*}
In other words, 
$$
(u_1, u_2) = \hcT (g_1, g_2) + \hcT (0, 2 \lambda \Sigma_2 u_2). 
$$
Since  this equation has at most one solution and $\hcT$ is compact, this equation has a unique solution by the Fredholm theory. 
The proof is complete. 
\qed
 
\begin{remark} \rm Note from \eqref{rem-AD} that ${\mathcal T}_\lambda$ is not self-adjoint. 
\end{remark}

 \section{Hilbert-Schmidt operators} \label{sect-HS}

We now devote two subsections to the applications of Hilbert-Schmidt operators for the transmission problem. In the first subsection, we recall some basis facts on Hilbert-Schmidt operators and the finite double norms. In the second subsection, we derive their applications for the transmission problem. The main result here is \Cref{pro-trace0}. 

\subsection{Some basic facts on Hilbert-Schmidt operators} 
 In this section, we recall the definition and several properties of Hilbert-Schmidt operators.  We begin with 
 
 \begin{definition} Let $H$ be a separable Hilbert space and let $(\phi_k)_{k=1}^\infty$ be an orthogonal basis. A bounded linear operator $\mathbf{T}: H \to H$ is Hilbert Schmidt  if its finite double norm 
 \begin{equation}\label{X1}
 \vvvert
 \mathbf{T}  \vvvert: =\left(\sum_{k=1}^{\infty} \| \mathbf{T}(\phi_k)\|_{H}^2\right)^{1/2} < + \infty. 
 \end{equation}
The trace  of $\mathbf{T}$ is then defined by 
 \begin{equation}\label{X2}
 \tr (\mathbf{T})=\sum_{k=1}^{\infty}\langle \mathbf{T}(\phi_k),\phi_k\rangle. 
 \end{equation}
 \end{definition}

\begin{remark} \rm The definition of $ \vvvert \bT  \vvvert$ and of  $\tr (\bT)$ do not depend on the choice of $(\phi_k)$, see e.g. \cite[Chapter 12]{Agmon}. 
\end{remark}

One can check,  see \cite[Theorem 12.12]{Agmon},  that if $\bT_1$ and $\bT_2$ are Hilbert Schmidt then $\bT_1 \bT_2$ is also Hilbert Schmidt,  and 
\begin{equation}\label{X3}
\left|\tr (\mathbf{T}_1\mathbf{T}_2)\right|\leq \vvvert \mathbf{T}_1 \vvvert \vvvert \mathbf{T}_2 \vvvert .
\end{equation}

Let $m \in \mN$ and  $\bT : [L^2(\Omega)]^m \to [L^2(\Omega)]^m$ be a Hilbert Schmidt operator. There exists a unique kernel  $\bK \in [L^2(\Omega \times \Omega)]^{m \times m}$, see e.g. \cite[Theorems 12.18 and  12.19]{Agmon},  such that 
\begin{align}\label{Z2}
	(	\mathbf{T}u)(x)=\langle \bK(x,.), u \rangle ~~~\mbox{ for a.e. } \, x\in \Omega, \mbox{ for all } u \in [L^2(\Omega)]^m. 
	\end{align}
Moreover, 
\begin{equation}\label{norm-K-T}
\vvvert\mathbf{T} \vvvert^2=\mathop{\iint}_{\Omega\times\Omega}|\bK(x,y)|^2 \,  dx \,  dy. 
\end{equation}
Note that \cite[Theorems 2.18 and  12.19]{Agmon} state for  $m=1$,  nevertheless, the same arguments hold for $m \in \mN$. 

\medskip 
 We have 
\begin{lemma}\label{lem-HS1} Let $d \ge 2$, $m \in \mN$,  and  $\bT : [L^2(\Omega)]^m \to [L^2(\Omega)]^m$ be  such that $\mathbf{T}(\phi)\in \mathbf{C}(\bar \Omega)$ for $\varphi  \in [L^2(\Omega)]^m$,  and 
	\begin{equation}\label{lem-HS1-st0}
	\|\mathbf{T}(\phi) \|_{L^\infty (\Omega)}\leq M \|\phi \|_{L^2 (\Omega)}, 
\end{equation} 
for some $M \ge 0$. 
Then  $\mathbf{T}$ is a Hilbert-Schmidt operator,  
	\begin{align}\label{lem-HS1-st1}
\vvvert \mathbf{T} \vvvert \leq |\Omega|^{1/2}M,
	\end{align}
	and  the kernel $\bK$ of $\bT$ satisfies
	\begin{equation}
	\label{lem-HS1-st1'}
	\sup_{x\in \Omega}\left(\int_{\Omega}|\bK(x,y)|^2  dy\right)^{1/2}\leq  |\Omega|^{1/2}M.
	\end{equation}
Assume in addition that 
		\begin{equation}\label{lem-HS1-st2}
	\| \mathbf{T}(\phi) \|_{L^\infty (\Omega)}\leq \widetilde{M}||\phi||_{L^1 (\Omega)} \mbox{ for } \phi\in [L^2(\Omega)]^m, 
	\end{equation}
	for some $\widetilde{M} \ge 0$, 
then the kernel $\bK$ of $\bT$ satisfies 
	\begin{align}\label{lem-HS1-st3}
	|\bK(x,y)|\leq \widetilde{M} ~~\forall x, y\in \Omega.
	\end{align}
\end{lemma}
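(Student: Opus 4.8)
The plan is to extract the kernel of $\bT$ from the Riesz representation theorem applied pointwise, and then read off all four estimates from the two hypotheses \eqref{lem-HS1-st0} and \eqref{lem-HS1-st2}. First I would fix $x \in \Omega$ and consider the linear functional $\phi \mapsto \bT(\phi)(x)$ on $[L^2(\Omega)]^m$, which by \eqref{lem-HS1-st0} is bounded with norm at most $M$ (the evaluation at $x$ is legitimate because $\bT(\phi) \in \mathbf{C}(\bar\Omega)$, so the pointwise value is well-defined). By the Riesz representation theorem there is a unique $\bK(x, \cdot) \in [L^2(\Omega)]^m$ — to be precise, a matrix-valued object in $[L^2(\Omega)]^{m\times m}$ once one tracks the $m$ components — with $\bT(\phi)(x) = \langle \bK(x, \cdot), \phi\rangle$ and $\|\bK(x,\cdot)\|_{L^2(\Omega)} \le M$. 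This is exactly \eqref{lem-HS1-st1'}, since the bound is uniform in $x$; one also needs a routine measurability check in $x$ to conclude $\bK \in [L^2(\Omega\times\Omega)]^{m\times m}$, which follows because $x \mapsto \bT(\phi)(x)$ is continuous for each fixed $\phi$ and a countable dense family of $\phi$'s determines $\bK(x,\cdot)$.

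Next, integrating $\|\bK(x,\cdot)\|_{L^2(\Omega)}^2 \le M^2$ over $x \in \Omega$ gives $\iint_{\Omega\times\Omega} |\bK(x,y)|^2\, dx\, dy \le |\Omega| M^2$, which is finite, so by \eqref{norm-K-T} the operator $\bT$ is Hilbert-Schmidt with $\vvvert \bT \vvvert \le |\Omega|^{1/2} M$, proving \eqref{lem-HS1-st1}. (Strictly, \eqref{Z2} and \eqref{norm-K-T} were quoted for Hilbert-Schmidt operators; here I instead verify Hilbert-Schmidtness directly from the explicit kernel $\bK$ I have just constructed, using that an integral operator with $L^2$ kernel is automatically Hilbert-Schmidt with double norm equal to the $L^2$ norm of the kernel, and that this integral operator agrees with $\bT$ on the dense set where the pointwise identity holds.)

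For the last assertion, I would again fix $x \in \Omega$ but now use hypothesis \eqref{lem-HS1-st2}: the functional $\phi \mapsto \bT(\phi)(x)$ is bounded on $[L^2(\Omega)]^m$ with respect to the $L^1(\Omega)$ norm, with constant $\widetilde M$. Testing against $\phi = \phi_n$ an $L^2$ approximate identity concentrating at a Lebesgue point $y$ of $\bK(x,\cdot)$ (e.g. $\phi_n = |B(y,1/n)|^{-1}\mathds{1}_{B(y,1/n)}$ times a unit coordinate vector), we have $\|\phi_n\|_{L^1(\Omega)} = 1$ while $\langle \bK(x,\cdot), \phi_n\rangle \to$ the corresponding component of $\bK(x,y)$; hence each component of $\bK(x,y)$ is bounded in modulus by $\widetilde M$, and after accounting for the $m$ components $|\bK(x,y)| \le \widetilde M$ for a.e.\ $(x,y)$, and then for all $(x,y)$ upon choosing the continuous representative. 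The main obstacle here is not the estimates themselves — those are immediate from Riesz — but the bookkeeping of measurability and the matrix/vector structure when $m > 1$: one must be careful that the pointwise functional really does produce a jointly measurable kernel and that "$\le M$" for the operator norm of evaluation translates to "$\le M$" for the $L^2$ norm of the kernel slice with the intended normalization of $|\cdot|$ on $\mathbb{R}^{m\times m}$. I would handle this by reducing to scalar components, applying the scalar argument to each, and summing.
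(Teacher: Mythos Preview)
Your proof is correct and lines up with the paper's on the kernel estimates \eqref{lem-HS1-st1'} and \eqref{lem-HS1-st3}: the paper also derives \eqref{lem-HS1-st1'} directly from \eqref{lem-HS1-st0} via the representation \eqref{Z2} (i.e.\ Riesz applied pointwise), and dispatches \eqref{lem-HS1-st3} in one line ``by the definition of the kernel,'' which is precisely your approximate-identity argument spelled out.

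The one genuine methodological difference is in the Hilbert--Schmidt bound \eqref{lem-HS1-st1}. The paper proves it \emph{before} invoking any kernel, via the classical Agmon orthonormal-basis trick: with $(\phi_k)$ an ONB and $\varphi_j=\bT\phi_j$, linearity and \eqref{lem-HS1-st0} give $\big|\sum_{j\le N} a_j\varphi_j(x)\big|\le M(\sum|a_j|^2)^{1/2}$, and choosing $a_j=\overline{\varphi_j(x)}$ yields $\sum_{j\le N}|\varphi_j(x)|^2\le M^2$ pointwise, whence integration over $\Omega$ gives $\vvvert\bT\vvvert^2\le|\Omega|M^2$. You instead first build the kernel, bound $\int_\Omega|\bK(x,y)|^2\,dy$ uniformly in $x$, and then integrate in $x$ and invoke \eqref{norm-K-T}. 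Your route is arguably more direct once the kernel is in hand (and even gives the slightly sharper slice bound $\|\bK(x,\cdot)\|_{L^2}\le M$ in the scalar case, rather than the stated $|\Omega|^{1/2}M$); the paper's route has the advantage of not needing the measurability/joint-$L^2$ check or the identification \eqref{norm-K-T} at all, since it works straight from the definition \eqref{X1}. Either way the content is the same.
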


\begin{proof} The proof is quite standard as in \cite{Agmon}. 
We present the details of this proof for the convenience of the reader. Let $(\phi_k)_{k=1}^\infty$ be an orthonormal basis of $[L^2(\Omega)]^m$ and set $\varphi_j=\mathbf{T}(\phi_j)$. Let $a_1,...,a_N \in \mC$ be arbitrary. By \eqref{lem-HS1-st0}, we have 
$$
\Big| \sum_{j=1}^{N}a_j\varphi_j(x) \Big| \leq M \| \sum_{j=1}^{N}a_j\phi_j \|_{L^2 (\Omega)}=M\left(\sum_{j=1}^{N}|a_j|^2\right)^{1/2}~~\forall x\in \Omega.$$
Choosing $a_j=\overline{\varphi_j(x)}$ yields 
$$
\sum_{j=1}^{N}| \varphi_j(x)|^2 \leq  M^2~~\forall x\in \Omega.
$$
Integrating over $\Omega$, we obtain $$
\sum_{j=1}^{N} \| \varphi^j \|^2_{L^2(\Omega)} \leq |\Omega| M^2, 
$$
which implies \eqref{lem-HS1-st1}.  

Assertion \eqref{lem-HS1-st1'} follows from \eqref{lem-HS1-st0} by  \eqref{Z2}.

It is clear that \eqref{lem-HS1-st3} is a consequence of \eqref{lem-HS1-st2} by the definition of the kernel.
\end{proof}

We next recall a  basic, useful property of a Hilbert-Schmidt operator, see e.g.,  \cite[Theorem 12.21]{Agmon} \footnote{Note that \cite[Theorems 2.21]{Agmon} states for  $m=1$,  nevertheless, the same arguments hold for $m \in \mN$. 
}: 

\begin{lemma} \label{lem-T1T2} Let $m \in \mN$ and let 	$\mathbf{T}_1,\mathbf{T}_2$ be two Hilbert-Schmidt operators in $[L^2(\Omega)]^m$ with the corresponding kernels $\bK_1$ and $\bK_2$. Then $\bT:  = \mathbf{T}_1\mathbf{T}_2$ is a Hilbert-Schmidt operator with the kernel $\mathbf{K}$ given by 
\begin{equation}\label{lem-T1T2-1}
\bK(x, y) = \int_{\Omega} \bK_1 (x, z)  \bK_2 (z, y) \, dz. 
\end{equation}
Moreover, 
\begin{equation}\label{traceT^2}
	\tr (\mathbf{T}_1\mathbf{T}_2)=\int_{\Omega} \tr ~\mathbf{K}(x,x)dx.
	\end{equation}
\end{lemma}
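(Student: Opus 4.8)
\textbf{Proof plan for \Cref{lem-T1T2}.}
The plan is to prove the two assertions in the natural order: first the kernel composition formula \eqref{lem-T1T2-1}, then the trace formula \eqref{traceT^2}. Since $\mathbf{T}_1$ and $\mathbf{T}_2$ are Hilbert-Schmidt, we already know (from the fact that the composition of Hilbert-Schmidt operators is Hilbert-Schmidt, cited as \cite[Theorem 12.12]{Agmon}) that $\mathbf{T} = \mathbf{T}_1 \mathbf{T}_2$ is Hilbert-Schmidt, hence by \eqref{Z2} it has a unique kernel $\bK \in [L^2(\Omega \times \Omega)]^{m \times m}$. The work is to identify this kernel. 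First I would fix $u \in [L^2(\Omega)]^m$ and write, using \eqref{Z2} for $\mathbf{T}_2$, that $(\mathbf{T}_2 u)(z) = \langle \bK_2(z, \cdot), u \rangle$ for a.e. $z$, and then apply \eqref{Z2} for $\mathbf{T}_1$ to get $(\mathbf{T} u)(x) = (\mathbf{T}_1 (\mathbf{T}_2 u))(x) = \langle \bK_1(x, \cdot), \mathbf{T}_2 u \rangle = \int_\Omega \bK_1(x, z) (\mathbf{T}_2 u)(z)\, dz$. Substituting the expression for $\mathbf{T}_2 u$ and applying Fubini's theorem — which is justified because $\bK_1(x, \cdot) \in [L^2(\Omega)]^{m \times m}$ for a.e. $x$, $\bK_2 \in [L^2(\Omega \times \Omega)]^{m \times m}$, and $u \in [L^2(\Omega)]^m$, so the double integral is absolutely convergent — yields $(\mathbf{T} u)(x) = \int_\Omega \big( \int_\Omega \bK_1(x, z) \bK_2(z, y)\, dz \big) u(y)\, dy$. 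By uniqueness of the kernel in \eqref{Z2}, the inner integral equals $\bK(x, y)$, which is \eqref{lem-T1T2-1}. One should also check that $z \mapsto \bK_1(x, z)\bK_2(z, y)$ is integrable for a.e.\ $(x,y)$, again via Cauchy-Schwarz and the $L^2$ membership of the kernels, so that the right-hand side of \eqref{lem-T1T2-1} is well-defined a.e.

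For the trace formula, I would start from the definition \eqref{X2}: choosing an orthonormal basis $(\phi_k)$ of $[L^2(\Omega)]^m$, $\tr(\mathbf{T}) = \sum_k \langle \mathbf{T}(\phi_k), \phi_k \rangle$. Writing $\mathbf{T} = \mathbf{T}_1 \mathbf{T}_2$ and using that $\mathbf{T}_1$ has kernel $\bK_1$, we have $\langle \mathbf{T}_1 \mathbf{T}_2 \phi_k, \phi_k \rangle = \langle \mathbf{T}_2 \phi_k, \mathbf{T}_1^* \phi_k \rangle$; alternatively, and more symmetrically, I would expand both inner products using the kernels and Parseval. Concretely, write $\langle \mathbf{T}_1 \mathbf{T}_2 \phi_k, \phi_k \rangle = \iint_{\Omega \times \Omega} \sum_{p,q} (\bK_1)_{pq}(x,z) (\mathbf{T}_2\phi_k)_q(z) \overline{(\phi_k)_p(x)}\, dz\, dx$ and further expand $\mathbf{T}_2\phi_k$ via $\bK_2$; then sum over $k$ and apply Parseval's identity to the basis $(\phi_k)$ — here one uses that for $f, g \in [L^2(\Omega)]^m$, $\sum_k \langle f, \phi_k\rangle \overline{\langle g, \phi_k \rangle} = \langle f, g \rangle$ — to collapse the sum, producing $\int_\Omega \big( \int_\Omega \tr\big( \bK_1(x, z) \bK_2(z, x) \big)\, dz \big)\, dx$, which by \eqref{lem-T1T2-1} is exactly $\int_\Omega \tr \mathbf{K}(x,x)\, dx$. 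The interchange of the sum over $k$ with the integrals needs a dominated-convergence or Fubini-Tonelli justification; this is where one must be careful.

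\textbf{Main obstacle.} The routine parts are the kernel identification (a direct Fubini argument) and the algebra of the trace expansion. The genuinely delicate point is the justification of interchanging $\sum_k$ with the double integral in the trace computation, since the individual terms need not be absolutely summable in an obvious way. The clean way around this is to use the already-known bound \eqref{X3}, $|\tr(\mathbf{T}_1 \mathbf{T}_2)| \le \vvvert \mathbf{T}_1 \vvvert \, \vvvert \mathbf{T}_2 \vvvert$, together with \eqref{norm-K-T}, to control everything by $L^2$ norms of the kernels, and to first establish the identity for a dense class (e.g.\ finite-rank operators, whose kernels are finite sums of products $a(x)\overline{b(y)}$, for which \eqref{traceT^2} is an elementary computation) and then pass to the limit using that both sides are continuous with respect to the Hilbert-Schmidt norm — the left side by \eqref{X3} and the right side by the Cauchy-Schwarz bound $|\int_\Omega \tr \mathbf{K}(x,x)\,dx| \le \vvvert \mathbf{T}_1\vvvert\,\vvvert\mathbf{T}_2\vvvert$ applied to the kernel formula \eqref{lem-T1T2-1}. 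Since this lemma is quoted essentially verbatim from \cite[Theorem 12.21]{Agmon}, I would in fact keep the proof brief, citing Agmon for the scalar case $m=1$ and remarking that the componentwise argument is identical for general $m \in \mN$.
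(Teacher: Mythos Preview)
Your proposal is correct, and in fact it goes well beyond what the paper does: the paper gives no proof at all of this lemma, simply citing \cite[Theorem 12.21]{Agmon} for the scalar case and noting in a footnote that the same argument works for general $m\in\mN$. Your concluding suggestion --- to keep the proof brief, cite Agmon for $m=1$, and remark that the componentwise extension to $m\ge 1$ is identical --- is precisely the paper's approach.
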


\begin{remark} \rm Using \eqref{lem-T1T2-1}, one can check that 
$$
\int_{\Omega} |\bK(x,x)| \, dx \le \int_{\Omega } |\bK_1(x, z)| |\bK_2(z, x)| \, dz \,dx \le  \| \bK_1 \|_{L^2(\Omega \times \Omega)} \| \bK_2 \|_{L^2(\Omega \times \Omega)}. 
$$
Hence $\bK(x, x) \in [L^1(\Omega)]^{m \times m}$. 
\end{remark}

\subsection{Applications of the theory of Hilbert-Schmidt operators}

In this section, we apply the theory of Hilbert-Schmidt operators 
to the operator $T_\lambda$ mentioned in the introduction. The main ingredient of the analysis is \Cref{thm2}. We begin with

\begin{definition} \label{De1} Let $\eps_0>0$ and  $\Lambda \ge 1$.  Assume the assumptions of \Cref{thm2} hold. Let $\Lambda_0$ and $C$ be the constants in  \Cref{thm2}. For $\lambda \in \cL(\theta, \Lambda_0)$ with $\inf_{n \in \mZ} |\theta -  n \pi| > \eps_0$,  define 
$$
\begin{array}{ccccc}
T_\lambda:  & [L^2(\Omega)]^2 & \to &  [L^2(\Omega)]^2 \\[6pt]
& f &  \mapsto & u, 
\end{array}
$$
where $ u = (u_1, u_2) \in [H^1(\Omega)]^2$ is the unique solution of, with $(f_1, f_2) = f$,  
	\begin{equation}\label{pro1a*} 
	\left\{ \begin{array}{cl}
	\dive(A_1 \nabla u_1) - \lambda\Sigma_1 u_1= \Sigma_1 f_1 &\text{ in}~\Omega, \\[6pt]
	\dive(A_2 \nabla u_2) - \lambda\Sigma_2 u_2= \Sigma_2 f_2 &\text{ in}~\Omega, \\[6pt]
	u_1 =u_2, \quad  A_1 \nabla u_1\cdot \nu = A_2 \nabla u_2\cdot \nu & \text{ on }\Gamma.
	\end{array}
	\right.
	\end{equation}
\end{definition}

From now on, we fix the constant $\Lambda_0$ as required in \Cref{De1} for a given $\eps_0$ and set 
\begin{equation}\label{def-lambda_0}
\lambda_0 = \Lambda_0 e^{i \pi/2}. 
\end{equation}

\begin{remark} \rm  Let $T^\star_\lambda$ be  the adjoint operator of $T_\lambda$ , i.e.,   $
\langle T_{\lambda}(f), g \rangle =   \langle f, T_\lambda^* (g) \rangle
	$  for any $f, \, g \in [L^2(\Omega)]^2 $. Integrating by parts, one has
	\begin{align*}
	&\left(\int_\Omega 	\dive(A_1 \nabla u_1) v_1- 	\dive(A_2 \nabla u_2) v_2\right) -	\left(\int_\Omega 	u_1\dive(A_1 \nabla v_1) - u_2	\dive(A_2 \nabla v_2)\right) \\&=\left(\int_{\Gamma} v_1.	A_1 \nabla u_1\cdot\nu- 	v_2. A_2 \nabla u_2\cdot\nu\right)-\left(\int_{\Gamma} u_1	A_1 \nabla v_1\cdot\nu - 	u_2A_2 \nabla v_2\cdot\nu\right).
	\end{align*}  
	This implies 
	\begin{equation}\label{adjopT}
		T_{\lambda}^*=\begin{pmatrix}\Sigma_1 & 0\\0 & -\Sigma_2 \end{pmatrix}T_{\overline{\lambda }}\begin{pmatrix}1/\Sigma_1 & 0\\0 & -1/\Sigma_2 \end{pmatrix}. 
	\end{equation} 
Thus $T_\lambda$ is not self-adjoint. 
\end{remark}
For the operator $T_\lambda$ defined above, the following estimates hold:  

\begin{proposition}\label{pro-HS}   We have
	\begin{equation}\label{Z3}
	\|T_{\lambda} \|_{L^p \to L^\infty}\leq C  |\lambda|^{-1+\frac{d}{2p}} ~~\text{if}~~p>d,
	\end{equation}
		\begin{equation}\label{Z4}
	\|T_{\lambda} \|_{L^p \to L^{q}}\leq C   |\lambda|^{-1+\frac{d}{2}\left(\frac{1}{p}-\frac{1}{q}\right)}~~\text{if}~~1<p < d, ~~p\leq q< \frac{dp}{d-p},
	\end{equation}
	\begin{equation}\label{Z5}
	\|T_{\lambda} \|_{L^1 \to L^q} \le C  |\lambda|^{-1+\frac{d}{2}-\frac{d}{2q}}~~\text{if}~~1<q<\frac{d}{d-1}.
	\end{equation}
Assume that  $\lambda_{1},...,\lambda_{k+1}$ satisfy the assumption of \Cref{thm2} with  $k= k_d = \left[\frac{d}{2}\right]$,  and $|\lambda_1|\sim |\lambda_2|\sim...\sim |\lambda_{k+1}|=t$.  Then  operator $\prod_{j=1}^{k+1}T_{\lambda_j}=T_{\lambda_{k+1}}\circ T_{\lambda_{k}}\circ...\circ T_{\lambda_{1}}$ is Hilbert-Schmidt, and 
		\begin{equation}\label{Z6}
		\Big\vvvert \prod_{j=1}^{k+1}T_{\lambda_j} \Big\vvvert \leq C t^{\frac{d}{4}-1-k}.
		\end{equation}

\end{proposition}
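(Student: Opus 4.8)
The plan is to derive \eqref{Z3}--\eqref{Z5} directly from the estimates in \Cref{thm2}, and then obtain the Hilbert--Schmidt bound \eqref{Z6} by composing these mapping properties and counting exponents. Observe first that if $f=(f_1,f_2)\in[L^2(\Omega)]^2$ and $u=T_\lambda f$, then $u$ solves \eqref{pro1-sec1'} with $g_j=\Sigma_j f_j$; since $\Sigma_1,\Sigma_2$ are bounded above and below by \eqref{condi2}, we have $\|g\|_{L^p(\Omega)}\le C\|f\|_{L^p(\Omega)}$. Hence \eqref{Z3} follows immediately from \eqref{thm2-st3}, \eqref{Z4} from \eqref{thm2-st2} (with $q<\frac{dp}{d-p}$, the strict inequality coming from the restriction $p\le q<\frac{dp}{d-p}$ in Gagliardo--Nirenberg), and \eqref{Z5} from \eqref{thm2-st4} after identifying the exponent range: for $p>d$ and $q=\frac{p}{p-1}$ one has $1<q<\frac{d}{d-1}$, and conversely any $q\in(1,\frac{d}{d-1})$ arises this way. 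This part is purely bookkeeping.

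For \eqref{Z6}, I would set $k=k_d=[d/2]$ and factor the composition $\mathbf{T}=\prod_{j=1}^{k+1}T_{\lambda_j}$ so as to exploit \eqref{Z5} for the first factor (gaining $L^1\to L^{q_0}$ smoothing) and \eqref{Z4} for the intermediate factors (bootstrapping integrability up the Sobolev scale), and finally \eqref{Z3} for the last factor to land in $L^\infty$. Concretely, the strategy is to choose a chain of exponents $1\le p_0< p_1<\dots<p_{k}$ with $p_0$ slightly above... actually with the first map taking $L^1$ or $L^2$ into some $L^{q}$, then iterating $T_{\lambda_j}: L^{p_{j-1}}\to L^{p_j}$ so that after $k$ applications of the gain $\frac1p-\frac1q$ one exits the range $p<d$, and the $(k+1)$-st map sends $L^{p_k}\to L^\infty$ via \eqref{Z3}. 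Since each factor contributes a power $|\lambda_j|^{-1+\frac d2(\frac1{p_{j-1}}-\frac1{p_j})}\sim t^{-1+\frac d2(\frac1{p_{j-1}}-\frac1{p_j})}$ (and the last one $t^{-1+\frac{d}{2p_k}}$), the total power of $t$ telescopes: the sum of the $-1$'s gives $-(k+1)$, and the sum of the gains gives $\frac d2(\frac1{p_0}-\frac1{p_k})+\frac{d}{2p_k}=\frac d2\cdot\frac1{p_0}$. Taking $p_0=2$ (starting from $[L^2(\Omega)]^2$, the natural domain) yields total exponent $-(k+1)+\frac d4$, which is exactly $\frac d4-1-k$. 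Once $\mathbf{T}$ maps $[L^2(\Omega)]^2$ into $\mathbf{C}(\bar\Omega)$ with $\|\mathbf{T}(\phi)\|_{L^\infty}\le Ct^{\frac d4-1-k}\|\phi\|_{L^2}$, \Cref{lem-HS1} (applied with $m=2$ and $M=Ct^{\frac d4-1-k}$) gives both that $\mathbf{T}$ is Hilbert--Schmidt and the bound \eqref{Z6}.

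The step requiring genuine care is verifying that the intermediate exponents can actually be chosen to stay within the hypotheses of \eqref{Z4} and \eqref{Z3} at every stage, i.e.\ that $k=[d/2]$ applications of the Sobolev gain suffice to pass from $L^2$ to an exponent exceeding $d$ so that the final $L^\infty$ step is legitimate. The gain per step of $T_{\lambda_j}:L^{p_{j-1}}\to L^{p_j}$ with $p_j<\frac{dp_{j-1}}{d-p_{j-1}}$ is $\frac1{p_{j-1}}-\frac1{p_j}<\frac1d$, so after $k$ steps one decreases $\frac1p$ by nearly $\frac kd$; starting from $\frac12$ this reaches roughly $\frac12-\frac kd$, which is $\le 0$ (hence beyond the Sobolev threshold, allowing the $L^\infty$ step) precisely when $k\ge d/2$, i.e.\ $k=[d/2]$ when $d$ is even and one has a small margin when $d$ is odd. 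Because the inequalities in \eqref{Z4} are strict, one must be slightly careful near the endpoints and may need to split one gain of size close to $1/d$ into the last two factors, or allow a tiny loss absorbed by the equivalence $|\lambda_1|\sim\dots\sim|\lambda_{k+1}|\sim t$; this is the only place where the argument is not entirely mechanical. I expect the rest to reduce to keeping track of the exponent arithmetic and invoking \Cref{lem-HS1}.
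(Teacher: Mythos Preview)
Your proposal is correct and follows essentially the same route as the paper: deduce \eqref{Z3}--\eqref{Z5} directly from \eqref{thm2-st2}--\eqref{thm2-st4}, then choose a chain $p_1=2<p_2<\dots<p_{k+1}$ with $p_{k+1}>d$ and $p_{j+1}<\frac{dp_j}{d-p_j}$, compose the bounds from \eqref{Z4} and \eqref{Z3}, telescope the exponents to $\frac{d}{4}-(k+1)$, and invoke \Cref{lem-HS1}. Your feasibility check for the exponent chain (that $k=[d/2]$ steps suffice to cross the threshold $p>d$) is more explicit than the paper's, which simply asserts such a choice exists; note that you do not need \eqref{Z5} at all for \eqref{Z6}, since the Hilbert--Schmidt criterion in \Cref{lem-HS1} requires only the $L^2\to L^\infty$ bound.
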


\begin{proof} Clearly, \eqref{Z3} - \eqref{Z5} follow from \eqref{thm2-st2} - \eqref{thm2-st4}. Fix  $p_1=2<p_2<...<p_{k}<p_{k+1}< + \infty$  with  $p_{k+1}>d$ and $p_{j} <  \frac{dp_{j-1}}{d-p_{j-1}}$. By \eqref{Z3} and \eqref{Z4}, we obtain 
	\begin{multline*}
	\Big\|\prod_{j=1}^{k+1}T_{\lambda_j} \Big\|_{L^2\to L^\infty}\leq 	\|T_{\lambda_1}\|_{L^{p_1}\to L^{p_2}} ....	\|T_{\lambda_k}\|_{L^{p_k}\to L^{p_{k+1}}}\|T_{\lambda_{k+1}}\|_{L^{p_{k+1}}\to L^{\infty}} \\[6pt]
	\leq C  \left( \prod_{j=1}^{k} 
|\lambda|^{-1 + \frac{d}{2} \Big(\frac{1}{p_j} - \frac{1}{p_{j+1}} \Big)} \right) |\lambda|^{-1 + \frac{d}{2 p_{k+1}}} = C |\lambda|^{\frac{d}{4}  - (k+1)}.
	\end{multline*}
	The conclusions now follow from \Cref{lem-HS1}. 
\end{proof}

The following is the main result of this section and plays a crucial role in our analysis. 

\begin{proposition}\label{pro-trace0} Let $k= k_d = \left[\frac{d}{2}\right]$ and denote
$$
\theta_j = \Big( \frac{1}{4}  + 2 (j-1) \Big) \frac{\pi}{k+1}  \mbox{ and }~~ \theta_{k+1 + j} = \Big( \frac{5}{4}  + 2 (j-1) \Big) \frac{\pi}{k+1}\quad \mbox{ for } 1 \le j \le k+1.
$$
Let  $t > 10 \Lambda_0$  and set $\mu_j = \lambda_0 + t z_j$ with $z_j  = e^{i \theta_j}$  for $j=1,\dots ,2(k+1)$. We have 
\begin{align}\label{equ-trace1-sec1} 
	\tr ~(T_{\mu_{2(k+1)}}\circ T_{\mu_{2k+1}}\circ...\circ T_{\mu_{1}} )=\sum_{j}\frac{1}{\tlambda_j^{2(k+1)} -it^{2(k+1)}},
\end{align}
	where each  characteristic value $\tlambda_j$ of $T_{\lambda_0}$ is repeated  a number of times equal to its multiplicity. 
\end{proposition}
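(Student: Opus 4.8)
The plan is to reduce the $2(k{+}1)$-fold product of solution operators to a single rational function of the one fixed compact operator $S:=T_{\lambda_0}$, and then to read off its trace from the spectrum of $S$.

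First, I would record the resolvent identity for the family $\{T_\lambda\}$: if $u=T_\mu f$, then $u-(\mu-\mu')T_{\mu'}u$ solves the $\mu'$-version of \eqref{pro1a*} with the same right-hand side (a one-line check on each of the two equations and on the two transmission conditions), so $T_\mu-T_{\mu'}=(\mu-\mu')\,T_\mu T_{\mu'}=(\mu-\mu')\,T_{\mu'}T_\mu$ whenever both operators are defined; equivalently, dividing the $j$-th line of \eqref{pro1a*} by $\Sigma_j$ exhibits $T_\lambda$ as the resolvent at $\lambda$ of a single $\lambda$-independent operator. Taking $\mu'=\lambda_0$ and $\mu=\mu_j=\lambda_0+t z_j$ yields
\[
T_{\mu_j}\,(I-t z_j S)=S=(I-t z_j S)\,T_{\mu_j},\qquad 1\le j\le 2(k{+}1),
\]
after first checking each $\mu_j$ is admissible for \Cref{thm2}: $|\mu_j|\ge t-\Lambda_0\ge\Lambda_0$, and since $\Lambda_0/t<1/10$ the argument of $\mu_j$ lies within a small angle of $\theta_j$, which is at distance $\ge\tfrac{\pi}{4(k+1)}$ from $\pi\mZ$, so $\inf_n|\arg\mu_j-n\pi|\ge\eps_0$. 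These identities force $I-t z_j S$ to be injective, hence — being $I$ plus a compact operator — invertible, so $T_{\mu_j}=S(I-t z_j S)^{-1}=(I-t z_j S)^{-1}S$. In particular all the $T_{\mu_j}$ are mutually commuting functions of $S$, and $(t z_j)^{-1}\notin\sigma(S)$ for every $j$.

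Next, I would collapse the product. A short computation gives $z_j^{2(k+1)}=e^{i\pi/2}=i$ for every $j$ and shows the $z_j$ are pairwise distinct, so $\{t z_j\}_{j=1}^{2(k+1)}$ is exactly the set of (simple) roots of $X^{2(k+1)}-i t^{2(k+1)}$; hence, as polynomials, $\prod_{j=1}^{2(k+1)}(1-t z_j x)=1-i t^{2(k+1)}x^{2(k+1)}$. Substituting $x=S$ and using commutativity,
\[
\prod_{j=1}^{2(k+1)}T_{\mu_j}=S^{2(k+1)}\prod_{j=1}^{2(k+1)}(I-t z_j S)^{-1}=S^{2(k+1)}\bigl(I-i t^{2(k+1)}S^{2(k+1)}\bigr)^{-1}=:R(S),
\]
where $R(z)=z^{2(k+1)}\bigl(1-i t^{2(k+1)}z^{2(k+1)}\bigr)^{-1}$ is rational with poles exactly the $(t z_j)^{-1}$, hence holomorphic on a neighbourhood of $\sigma(S)$, and $R(z)\neq 0$ for $z\in\sigma(S)\setminus\{0\}$.

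Finally, I would compute the trace spectrally. By \Cref{pro-HS}, $\bT_1:=T_{\mu_{k+1}}\circ\cdots\circ T_{\mu_1}$ and $\bT_2:=T_{\mu_{2(k+1)}}\circ\cdots\circ T_{\mu_{k+2}}$ are Hilbert--Schmidt, so $\prod_j T_{\mu_j}=\bT_2\bT_1$ is trace class, and by the spectral theory of trace-class operators (cf.\ \cite[Ch.~12]{Agmon}) its trace equals the absolutely convergent sum of its nonzero eigenvalues, counted with algebraic multiplicity. The nonzero eigenvalues of the compact operator $S$, with algebraic multiplicity, are exactly the $\tlambda_j^{-1}$ with $\tlambda_j$ the characteristic values of $T_{\lambda_0}$ repeated according to multiplicity; transporting them through the holomorphic functional calculus $z\mapsto R(z)$ (tracking the Riesz projections to keep algebraic multiplicities) shows that the nonzero eigenvalues of $R(S)=\prod_j T_{\mu_j}$, with multiplicity, are exactly $R(\tlambda_j^{-1})=\bigl(\tlambda_j^{2(k+1)}-i t^{2(k+1)}\bigr)^{-1}$. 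Summing over $j$ gives \eqref{equ-trace1-sec1}. The computational step (the second one) is routine once one spots the common value $z_j^{2(k+1)}=i$; the delicate point is the last step — justifying that the rational functional calculus preserves algebraic multiplicities of nonzero eigenvalues and that ``trace $=$ sum of eigenvalues'' is available in the Hilbert--Schmidt framework of \cite{Agmon}, bearing in mind that although $\prod_j T_{\mu_j}=\bT_2\bT_1$ is trace class the individual factors $T_{\mu_j}$ need not even be Hilbert--Schmidt, so it is precisely the passage through $S$ in the first two steps that makes this spectral bookkeeping possible.
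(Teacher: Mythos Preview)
Your argument is correct and takes a genuinely different route from the paper's. You collapse the full $2(k{+}1)$-fold product in one stroke to $R(S)=S^{2(k+1)}(I-it^{2(k+1)}S^{2(k+1)})^{-1}$ and then invoke spectral mapping plus Lidskii's theorem (trace of a trace-class operator equals the sum of its eigenvalues with algebraic multiplicity). The paper instead splits the product in half: it sets $\mathbf{T}=T_{\mu_{2(k+1)}}\circ\cdots\circ T_{\mu_{k+2}}$ (Hilbert--Schmidt by \Cref{pro-HS}), identifies the remaining half $T_{\mu_{k+1}}\circ\cdots\circ T_{\mu_1}$ as the modified operator $(\mathbf{T})_{\lambda}$ for $\lambda=2e^{i\pi/4}t^{k+1}$, and then applies the concrete Hilbert--Schmidt trace formula \cite[Theorem 12.17]{Agmon}, which yields $\tr(\mathbf{T}\circ(\mathbf{T})_\lambda)=\sum_j\frac{1}{s_j(s_j-\lambda)}+c_t$ with an \emph{a priori} undetermined constant $c_t$; a separate asymptotic argument (letting $\lambda$ vary along a ray and using the double-norm bounds of \Cref{pro-HS}) is then needed to force $c_t=0$, and a further step matches the characteristic values $s_j$ of $\mathbf{T}$ with $\tilde\lambda_\ell^{k+1}-e^{i5\pi/4}t^{k+1}$. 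Your approach is shorter and more transparent, but it leans on Lidskii, which is a deeper input than anything in Agmon's Chapter~12 (and, as you yourself flag, is not actually proved there); the paper's approach stays strictly within Agmon's Hilbert--Schmidt framework at the cost of the two extra steps just described.
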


\begin{remark} \rm In \Cref{pro-trace0}, $\Lambda_0$ is chosen large and corresponds  with $\eps_0 = \frac{\pi}{8(k+1)}$. 

\end{remark}

\begin{proof} It is clear that  $z_{1}, \dots , z_{k+1}$ are the solutions of $z^{k+1}-e^{i\frac{\pi}{4}}=0$ in $\mathbb{C}$ and $z_{k+2}, \dots , z_{2(k+1)}$ are the solutions of $z^{k+1}-e^{i\frac{5\pi}{4}}=0$ in $\mathbb{C}$. One then has, for $z\in \mathbb{C}$,  
\begin{equation}\label{prepare1}
\prod_{j=1}^{k+1}(z-z_{j})   =  z^{k+1}-e^{i\frac{\pi}{4}}, \quad
\prod_{j=1}^{k+1}(1-z_{j}z)  =1-e^{i\frac{\pi}{4}}z^{k+1}
\end{equation}
and 
\begin{equation}\label{prepare1-1}
\prod_{j=k+2}^{2(k+1)}(z-z_{j})   =  z^{k+1}-e^{i\frac{5\pi}{4}}, \quad
\prod_{j=k+2}^{2(k+1)}(1-z_{j}z)  =1-e^{i\frac{5\pi}{4}}z^{k+1}. 
\end{equation}

Note that, if $T_\lambda$ and $T_{\lambda+s}$ exist, and $T_\lambda$ is compact, then $s$ is not a characteristic value of $T_\lambda$, and 
\begin{equation}\label{prepare2}
T_{\lambda+s} = T_{\lambda }(I-sT_{\lambda} )^{-1} = (I-sT_{\lambda} )^{-1} T_{\lambda }.  
\end{equation}
Indeed, if $T_\lambda$ and $T_{\lambda+s}$ exist, one can check that $I - s T_\lambda$ is injective, and therefore subjective since $T_\lambda$ is compact. 
One can then show that \eqref{prepare2} holds.

As a consequence of \eqref{prepare2}, $T_{\lambda+s}$ is the modified operator of $T_\lambda$ with respect to $s$. 

Set  
$$
	\mathbf{T}=T_{\mu_{2(k+1)}}\circ...\circ T_{\mu_{k+2}}.
$$ 
It follows from \Cref{pro-HS} that  $\mathbf{T}$ is Hilbert-Schmidt,  and 
$$
	\vvvert \mathbf{T}\vvvert  \leq C t^{\frac{d}{4}-1-k}, \quad \mbox{ and } \quad \|\mathbf{T}\|_{L^2\to L^2}\leq C t^{-k-1}.
$$
Let $s_1, \, s_2, \dots$ be the characteristic values  of $\mathbf{T}$ repeated  a number of times equal to their multiplicities.   Thanks to \cite[Theorem 12.17]{Agmon}, one has, for a non-characteristic value $\lambda$ of $\bT$,  
	 \begin{align}\label{eq-A}
	 \tr ~(\mathbf{T}\circ (\mathbf{T})_{\lambda}  )=\sum_{j}\frac{1}{s_j(s_j-\lambda)} + c_t,
	 \end{align} 
where $(\bT)_{\lambda}$ is the modified operator associated with $\bT$ and  $\lambda$, i.e., $(\mathbf{T})_{\lambda}: =\mathbf{T}(I-\lambda\mathbf{T})^{-1}$, for some $c_t \in \mC$. 
	 
By applying \eqref{eq-A} with $\lambda = 2e^{\frac{i\pi}{4}}t^{k+1}$, it suffices to establish 
	\begin{equation}\label{Z10}
	(\mathbf{T})_{\lambda} =T_{\mu_{k+1}}\circ...\circ T_{\mu_1} \mbox{ for } \lambda = 2e^{\frac{i\pi}{4}}t^{k+1}, 
	\end{equation}
	\begin{equation}\label{Z9}
	s_j=\tlambda_\ell^{k+1}-e^{i\frac{5\pi}{4}}t^{k+1},
	\end{equation}
	for some $\ell$, and 
	\begin{equation}\label{Z9-10}
	\mbox{ the multiplicity of $s_j$ is equal to the sum of  the multiplicity of $\tlambda_\ell$ such that \eqref{Z9} holds}, 
	\end{equation}
	and 
	\begin{equation}\label{AZ}
	c_t = 0. 
	\end{equation}

This will be done in the next three steps. 
	
\medskip 	
\noindent \underline{Step 1: }  Proof of  \eqref{Z10}. Since $\mu_j -\lambda_0=z_j t$, it follows from  the second identity in \eqref{prepare1} that  
\begin{equation}\label{prepare3}
	\prod_{l=1}^{k+1}\big(1-(\mu_l-\lambda_0)z \big)=1-e^{i\frac{\pi}{4}}t^{k+1}z^{k+1}. 
\end{equation}
One has
\begin{multline}\label{TTT}
	T_{\mu_{k+1}}\circ...\circ T_{\mu_1} \mathop{=}^{\eqref{prepare2}} T_{\lambda_0}\left(I-(\mu_{k+1}-\lambda_0)T_{\lambda_0}\right)^{-1}\circ...\circ T_{\lambda_0}\left(I-(\mu_{1}-\lambda_0)T_{\lambda_0}\right)^{-1} \\[6pt]
	\mathop{=}^{\eqref{prepare2}} 
	T_{\lambda_0}^{k+1} \prod_{l=1}^{k+1}(I-(\mu_l-\lambda_0) T_{\lambda_0} )^{-1}
	\mathop{=}^{\eqref{prepare3}} T_{\lambda_0}^{k+1}\left(I-e^{i\frac{\pi}{4}}t^{k+1}T_{\lambda_0}^{k+1}\right)^{-1}.
\end{multline}
In other words, we have 
\begin{equation}\label{pro-trace-p1}
	T_{\mu_{k+1}}\circ...\circ T_{\mu_1}=\left(T_{\lambda_0}^{k+1}\right)_{e^{i\frac{\pi}{4}}t^{k+1}}.
\end{equation}	

Similarly, we obtain  
\begin{equation}\label{pro-trace-p2}
	\bT = T_{\mu_{2(k+1)}}\circ...\circ T_{\mu_{k+2}} =\left(T_{\lambda_0}^{k+1}\right)_{e^{i\frac{5\pi}{4}}t^{k+1}}.
\end{equation}	
Using the property 
$$
\Big(\left(T_{\lambda_0}^{k+1}\right)_{\gamma_1} \Big)_{\gamma_2} = \left(T_{\lambda_0}^{k+1}\right)_{\gamma_1 + \gamma_2},
$$
for $\gamma_1$ and $\gamma_1 + \gamma_2$  non-characteristic values of $T_{\lambda_0}^{k+1}$, we derive from \eqref{pro-trace-p1} and \eqref{pro-trace-p2} that 
$$
	(\mathbf{T})_\lambda=\left(T_{\lambda_0}^{k+1}\right)_{e^{i\frac{5\pi}{4}}t^{k+1}+\lambda}=\left(T_{\lambda_0}^{k+1}\right)_{e^{i\frac{\pi}{4}}t^{k+1}} = T_{\mu_{k+1}}\circ...\circ T_{\mu_1}, 
$$
and  \eqref{Z10} follows. 

	\medskip
		\noindent \underline{Step 2: }  Proof of \eqref{Z9} and \eqref{Z9-10}. Since $\bT = (T_{\lambda_0}^{k+1})_{e^{i 5 \pi/4} t^{k+1} }$, it follows,  see e.g. \cite[Theorem 12.4]{Agmon}, that $s_j^{-1}$ is an eigenvalue of $\bT$ that  is  not equal to $  - e^{-i 5 \pi/4} t^{-(k+1)} $ if and only if $\frac{s_j^{-1}}{1 + s_j^{-1}  e^{i 5 \pi/4} t^{k+1}} = \frac{1}{s_j +  e^{i 5 \pi/4} t^{k+1}}$ is an eigenvalue of $T_{\lambda_0}^{k+1}$ (or equivalently $s_j + e^{i 5 \pi/4} t^{k+1} $ is a characteristic value of $T_{\lambda_0}^{k+1}$),  and they have the same multiplicity. One can check that  $- e^{-i 5 \pi/4} t^{-(k+1)} $ is not an eigenvalue of $\bT$.  Assertions \eqref{Z9} and \eqref{Z9-10} follow. 

\medskip
		\noindent \underline{Step 3: }  Proof of \eqref{AZ}. For $z \in \cL (\theta, 1)$ with $\inf_{n \in \mZ}|\theta - n \pi| > \eps_0$ and $|z|$ large enough, let $\tau_1, \cdots, \tau_{k+1}$ be the $k+1$ distinct roots in $\mC$ of the equation $x^{k+1} = z$. Set 
$$
\eta_l = \lambda_0 + \tau_l \mbox{ for } 1 \le l \le k+1.  
$$
As in the proof of \eqref{TTT}, one has 
$$
T_{\eta_{k+1}} \circ \dots \circ T_{\eta_1} = T_{\lambda_0}^{k+1} \Big(I - z T_{\lambda_0}^{k+1} \Big)^{-1}. 
$$
It follows that 
$$
T_{\eta_{k+1}} \circ \dots \circ T_{\eta_1} = \Big(T_{\lambda_0}^{k+1}\Big)_{z}. 
$$

Consider $\lambda$ defined by $e^{i\frac{5\pi}{4}}t^{k+1}+\lambda = z$.  We have, for  large $|z|$,  
\begin{equation}\label{lem-trace-p1}
	| \tr ~(\mathbf{T}\circ (\mathbf{T})_{\lambda}  ) | \mathop{\leq}^{\eqref{X3}} \vvvert {\bf T}  \vvvert  \vvvert {\bf T}_{\lambda} \vvvert \mathop{\leq}^{\eqref{Z6}} C_t | z|^{\frac{d}{4}-1 - k}\to 0~~\text{as}~~ |z| \to + \infty,
\end{equation}
	and 
	\begin{equation}\label{lem-trace-p2}
	\left|\sum_{j}\frac{1}{s_j(s_j- \lambda)} \right|\leq \left(\sum_{j} |s_j|^{-2} \right)^{1/2} \left(\sum_{j}|s_j - \lambda |^{-2} \right)^{1/2}. 
	\end{equation}
Applying  \cite[Theorem 12.14]{Agmon}, we have 
\begin{equation}\label{lem-trace-p3-1}
\sum_{j} |s_j|^{-2} \le \vvvert {\bf T} \vvvert \le C_t, 
\end{equation}	
and applying  \cite[Theorems 12.4 and 12.14]{Agmon}, we obtain 
\begin{equation}\label{lem-trace-p3-2}
\sum_{j}|s_j - \lambda |^{-2}  \le \vvvert {\bf T}_{\lambda} \vvvert \mathop{\le}^{\eqref{Z6}}
C_t | z|^{\frac{d}{4}-1 - k}\to 0~~\text{as}~~ |z| \to + \infty. 
\end{equation}	
We derive from \eqref{lem-trace-p2}, \eqref{lem-trace-p3-1}, and  \eqref{lem-trace-p3-2} that 
\begin{equation}\label{lem-trace-p4}
\sum_{j}\frac{1}{s_j(s_j-\lambda)}  \to 0 \mbox{ as } |z| \to + \infty. 
\end{equation}
Combining \eqref{lem-trace-p1} and \eqref{lem-trace-p4} yields 
$c_t = 0$.	

\medskip
The proof is complete. 
\end{proof}

\begin{remark} \label{rem-multiplicity} \rm Let $\lambda_j$ be an eigenvalue of the transmission problem. 
Then $\lambda_j - \lambda$ and $\lambda_j - \hat  \lambda$ are the characteristic values of $T_\lambda$ and $T_{\hat \lambda}$ respectively, provided that $T_\lambda$ and $T_{\hat \lambda}$ exist. 
Using \eqref{prepare2} and applying \cite[Theorem 12.4]{Agmon}, one can show that the multiplicity of $\lambda_j - \lambda$  and the multiplicity of  $\lambda_j - \hat \lambda$ are the same. 
\end{remark}

\section{The Weyl law for eigenvalues of the transmission problem - Proof of \Cref{thm1}}\label{sect-WL}

\subsection{Approximation of the trace of the kernel and their applications}  For  $\lambda \in \cL(\theta, 1) $ with $\theta \neq n \pi$ for all $n \in \mZ$,  and $x_0 \in \Omega$, set
\begin{equation}
\begin{array}{ccc}
 S_{j, \lambda, x_0}: L^2(\mR^d)  & \to &  L^2(\mR^d) \\[6pt]
 f_j & \mapsto & v_j, 
\end{array}
\end{equation}
where $v_j \in H^1(\mR^d)$ is the unique solution of 
$$
\dive(A_j(x_0) \nabla v_j) -\lambda\Sigma_j(x_0) v_j=  \Sigma_j(x_0) f_j  \quad \mbox{ in } \mR^d.
$$
We also define 
\begin{equation}
\begin{array}{ccc}
 S_{\lambda, x_0}: [L^2(\mR^d)]^2   & \to &  [L^2(\mR^d)]^2 \\[6pt]
 (f_1,f_2) & \mapsto & (S_{1, \lambda, x_0} f_1, S_{2, \lambda, x_0} f_2).  
\end{array}
\end{equation}
One then has 
 \begin{equation*}
  S_{j,\lambda,x_0}f (x)=\int_{\mR^d}F_{j,\lambda}(x_0,x-y)f_j (y)dy,
 \end{equation*}
where
$$
F_{j,\lambda}(x_0,z)=-\frac{1}{(2\pi)^d}\int_{\mathbb{R}^d}\frac{e^{iz\xi}}{\Sigma_j(x_0)^{-1} \langle  A_j(x_0) \xi, \xi \rangle+\lambda } d\xi~~\text{for}~~z \in \mR^d.$$
\newline

By \Cref{lem1},  we get, for $1 < p < + \infty$, 
\begin{equation}\label{Z8}
\| \nabla^2 S_{j,\lambda,x_0} f_j \|_{L^p(\mathbb{R}^d)} + |\lambda|^{1/2} \| \nabla S_{j,\lambda,x_0} f_j \|_{L^p(\mathbb{R}^d)} +|\lambda| \|S_{j,\lambda,x_0} f_j \|_{L^{p}(\mathbb{R}^d)}\leq C  \| f_j \|_{L^{p}(\mathbb{R}^d)}. 
\end{equation} 
As in the proof of \Cref{thm2}, we obtain from the interpolation inequalities \eqref{Z1} and \eqref{Z1'} that 
	\begin{equation}\label{trace-S1}
\|S_{\lambda,x_0}  \|_{L^p \to L^\infty}\leq C  |\lambda|^{-1+\frac{d}{2p}} ~~\text{if}~~p>d,
\end{equation}
\begin{equation}\label{trace-S2}
\|S_{\lambda,x_0} \|_{L^p \to L^{q}}\leq C   |\lambda|^{-1+\frac{d}{2}\left(\frac{1}{p}-\frac{1}{q}\right)}~~\text{if}~~1<p < d, ~~p\leq q< \frac{dp}{d-p},
\end{equation}
\begin{equation}\label{trace-S3}
\|S_{\lambda,x_0} \|_{L^1 \to L^q} \le C  |\lambda|^{-1+\frac{d}{2}-\frac{d}{2q}}~~\text{if}~~1<q<\frac{d}{d-1}.
\end{equation}

Let  $t > 10 \Lambda_0$ and  let $\mu_{1}, \dots , \mu_{2(k+1)}$ be defined in \Cref{pro-trace0}. Set,  for $z \in \mR^d$, 
\begin{align}\label{Z7-1}
\cF_{j,t}(x_0,z)= \frac{1}{(2\pi)^d}\int_{\mathbb{R}^d}\frac{e^{iz\xi}}{ \prod_{l=1}^{2 (k+1)}  \Big(\Sigma_j(x_0)^{-1}  \langle  A_j(x_0) \xi, \xi \rangle+ \mu_l \Big)} \, d\xi,  
\end{align} 
and define
$$
\cS_{j, t, x_0}   = \prod_{l=1}^{2(k+1)}S_{j,\mu_l ,x_0}. 
$$
Then 
\begin{equation*}
 \cS_{j, t, x_0} f_j (x)= \int_{\mR^d}\cF_{j,t}(x_0,x-y) f_j (y)dy.
 \end{equation*}
Since, by the definition of $\mu_l$ and  $z_l$, 
\begin{multline*}
\prod_{l=1}^{2 (k+1)}  \Big( \Sigma_j(x_0)^{-1} \langle  A_j(x_0) \xi, \xi \rangle+ \mu_l \Big) = \prod_{l=1}^{2 (k+1)}  \Big(\Sigma_j(x_0)^{-1}  \langle  A_j(x_0) \xi, \xi \rangle + \lambda_0 + t z_l \Big) \\[6pt]
= \big(\Sigma_j(x_0)^{-1}  \langle  A_j(x_0) \xi, \xi \rangle + \lambda_0 \big)^{2 (k+1)}  - i t^{2 (k+1)}, 
\end{multline*}
it follows from \eqref{Z7-1} that 
\begin{align}\label{Z7-11}
\cF_{j,t}(x_0,z)= \frac{1}{(2\pi)^d}\int_{\mathbb{R}^d}\frac{e^{iz\xi}}{ \big(\Sigma_j^{-1}(x_0) \langle  A_j(x_0) \xi, \xi \rangle + \lambda_0)^{2(k+1)}-i t^{2(k+1)}  } \, d\xi. 
\end{align} 

As a consequence of \eqref{Z7-11}, we obtain, by a change of variables,  
\begin{multline}
\cF_{j,t}(x_0,0)= \frac{1}{(2\pi)^d}\int_{\mathbb{R}^d}\frac{1}{ \big(\Sigma_j^{-1}(x_0) \langle  A_j(x_0) \xi, \xi \rangle + \lambda_0 \big)^{2(k+1)}-i t^{2(k+1)}  } \, d \xi  \\[6pt] = 
\frac{t^{\frac{d}{2} - 2(k+1)}}{(2\pi)^d}\int_{\mathbb{R}^d}\frac{1}{ \big(\Sigma_j^{-1}(x_0) \langle  A_j(x_0) \xi, \xi \rangle + t^{-1}\lambda_0 \big)^{2(k+1)} - i }   \, d \xi. 
\end{multline} 
This implies, by the dominated convergence theorem, 
\begin{align}\label{Z7}
\cF_{j,t}(x_0,0)= 
\frac{t^{\frac{d}{2} - 2(k+1)}}{(2\pi)^d}\int_{\mathbb{R}^d}\frac{1}{ \big(\Sigma_j^{-1}(x_0) \langle  A_j(x_0) \xi, \xi \rangle \big)^{2(k+1)}-i    } \, d \xi + \mathcal{O}(t^{\frac{d}{2} - 2 (k+1)-1}). 
\end{align} 

We next introduce $\cS_{t,x_0}: [L^2(\mR^d)]^2 \to [L^2(\mR^d)]^2$ by 
$$
\cS_{t,x_0} = S_{\mu_{2(k+1)} , x_0}\circ \cdots \circ S_{\mu_{1},x_0}, 
\quad \mbox{ where } \quad S_{\mu_l , x_0} = \begin{pmatrix} S_{1, \mu_l ,x_0} & 0\\0 & S_{2, \mu_l ,x_0} 
\end{pmatrix}. 
$$
Set 
$$
\cF _{t}(x_0, \cdot )=\begin{pmatrix}\cF_{1,t}(x_0, \cdot )& 0\\0 & \cF_{2,t}(x_0, \cdot ) \end{pmatrix}.
$$
We then have 
\begin{equation*}
 \cS_{t, x_0} f (x)= \int_{\mR^d}\cF_{t}(x_0,x-y) f (y)dy. 
 \end{equation*}

Let $\cK_{t}$ denote the kernel corresponding to $\prod_{l=1}^{2(k+1)}T_{\mu_l}=T_{\mu_{2(k+1)}}\circ \cdots \circ T_{\mu_{1}}$. Here is the main result of this section.

\begin{proposition} \label{pro-trace}  We have 
	\begin{equation}\label{trace-1-sec1}
\int _{\Omega}\tr ~\cK_{t}(x,x)dx= \mathbf{\hat c} t^{\frac{d}{2}-2(k+1)} +o(t^{\frac{d}{2}-2(k+1)})~~\text{as}~~t\to\infty,
	\end{equation}
	where 
\begin{equation}\label{def-hc}
	\mathbf{\hat c}=  \frac{1}{(2\pi)^d} \sum_{j=1}^2  \int_\Omega \int_{\mathbb{R}^d}\frac{1}{ \big(\Sigma_j^{-1}(x) \langle  A_j(x) \xi, \xi \rangle)^{2(k+1)}-i  } \, d\xi \, dx.  
\end{equation}
\end{proposition}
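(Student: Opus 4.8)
The plan is to compare the kernel $\cK_t$ of $\prod_{l=1}^{2(k+1)} T_{\mu_l}$ with the kernel $\cF_t(x,\cdot)$ of the constant-coefficient model operator $\cS_{t,x}$ obtained by freezing the coefficients at the diagonal point $x$, and to show that the trace of the difference is negligible compared to $t^{\frac d2 - 2(k+1)}$. Granting that, the identity \eqref{trace-1-sec1} reduces to evaluating $\int_\Omega \tr\,\cF_t(x,0)\,dx$, which by \eqref{Z7} (applied to $j=1,2$) is exactly $\mathbf{\hat c}\,t^{\frac d2 - 2(k+1)} + \mathcal O(t^{\frac d2 - 2(k+1) - 1})$, because $\cF_t(x,0)$ is the value of the convolution kernel at $z=0$, which is precisely the value of $\cS_{t,x}$'s kernel on the diagonal. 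So the whole content of the proposition is the diagonal error estimate

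\begin{equation}\label{goal-error}
\int_\Omega \big| \tr\,\cK_t(x,x) - \tr\,\cF_t(x,0) \big|\,dx = o\big(t^{\frac d2 - 2(k+1)}\big) \quad \text{as } t\to\infty.
\end{equation}

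First I would make precise the decomposition of $T_{\mu_l}$ into ``frozen part plus remainder''. Writing $R_{\mu_l, x_0} = T_{\mu_l} - (\text{localized } S_{\mu_l,x_0})$, one uses \Cref{cor1} and \Cref{cor2} (the freezing-coefficient corollaries behind \Cref{thm2}) to see that $R_{\mu_l,x_0}$ gains over the naive bound: its operator norm in the relevant $L^p \to L^q$ scales carries an extra factor $|\lambda|^{-1/2}$ together with the continuity modulus of the coefficients evaluated at scale comparable to the localization radius, which can be taken to shrink as $t\to\infty$; thus $R_{\mu_l,x_0}$ is $o(1)$ times the corresponding bound for $T_{\mu_l}$ in the same norm. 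Telescoping $\prod_l T_{\mu_l} - \prod_l S_{\mu_l,x_0}$ into a sum of $2(k+1)$ terms, each having exactly one factor $R$ and $2k+1$ factors of either $T$ or $S$, and composing along the same chain of exponents $p_1=2 < p_2 < \dots < p_{k+1} > d$ used in \Cref{pro-HS}, one gets that each term maps $L^2 \to L^\infty$ (after one more $T$ or $S$ factor) with norm $o(t^{\frac d4 - 1 - k})$. By \Cref{lem-HS1} this controls the $L^\infty$-norm of the associated kernel on the diagonal by $o(t^{\frac d2 - 2(k+1)})$ uniformly in $x_0$, and integrating over $\Omega$ (bounded) gives \eqref{goal-error}. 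One subtlety: since we want the frozen operator's kernel evaluated exactly at the \emph{true} diagonal point $x$, the localization cutoff must be centered at $x$ and the comparison has to be done pointwise in $x_0 = x$; this is handled by letting the cutoff radius $\rho = \rho(t) \to 0$ slowly (e.g.\ $\rho = t^{-\sigma}$ for small $\sigma>0$) so that the continuity modulus $\omega(\rho)\to 0$ beats the fixed power losses, while $\rho$ stays large enough that the localization error terms (commutators with the cutoff, which produce lower-order $\nabla\chi$ and $u\,\nabla\chi$ contributions as in the proofs of \Cref{cor1,cor2}) remain subordinate.

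The main obstacle, I expect, is precisely this uniform-in-$x_0$ bookkeeping of the remainder: one must verify that the extra $|\lambda|^{-1/2}$ and $\omega(\rho)$ gains in $R_{\mu_l,x_0}$ survive being threaded through a product of $2k+1$ operators in different $L^p$ spaces, and that the error coming from the fact that $\mu_l = \lambda_0 + tz_l$ are not all of the same modulus but only comparable ($|\mu_l|\sim t$) does not accumulate. A secondary technical point is justifying that $\tr\,\cK_t(x,x)$ is given by the honest diagonal value of the composed kernel (via \Cref{lem-T1T2}, iterated) so that the pointwise comparison makes sense $L^1(dx)$-a.e.; this follows from the Hilbert--Schmidt property in \Cref{pro-HS} together with the kernel-composition formula \eqref{lem-T1T2-1}. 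Once \eqref{goal-error} is in hand, combining it with \eqref{Z7} summed over $j=1,2$ and the identity $\cK_t(x,x) - \cF_t(x,0) = \cK_t(x,x) - (\text{diagonal of } \cS_{t,x}\text{'s kernel})$ yields \eqref{trace-1-sec1} with $\mathbf{\hat c}$ as in \eqref{def-hc}, and the dominated-convergence argument already recorded in \eqref{Z7} shows the $t$-power is exactly $\frac d2 - 2(k+1)$ with the stated constant.
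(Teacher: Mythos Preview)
Your overall architecture is right: reduce to \eqref{goal-error}, telescope $\prod T_{\mu_l}-\prod S_{\mu_l,x_0}$ into terms with one ``remainder'' factor, and show each term is small using a chain of $L^{p}\to L^{q}$ bounds. This is exactly what the paper does. But the specific chain you propose does not close.

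The gap is in the sentence ``each term maps $L^2\to L^\infty$ \dots\ with norm $o(t^{\frac d4-1-k})$. By \Cref{lem-HS1} this controls the $L^\infty$-norm of the associated kernel on the diagonal by $o(t^{\frac d2-2(k+1)})$.'' \Cref{lem-HS1} does \emph{not} convert an $L^2\to L^\infty$ bound into a pointwise bound on $K(x,x)$: assertion \eqref{lem-HS1-st1'} only gives $\sup_x\bigl(\int_\Omega|K(x,y)|^2\,dy\bigr)^{1/2}\le M$, which says nothing about $K(x,x)$. The pointwise kernel bound \eqref{lem-HS1-st3} requires an $L^1\to L^\infty$ estimate. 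Moreover, the exponent you quote, $o(t^{d/4-(k+1)})$, is the $L^2\to L^\infty$ norm of a $(k+1)$-fold product (as in \Cref{pro-HS}), not of the full $2(k+1)$-fold difference; so even the bookkeeping of powers is off.

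The fix, which is what the paper does, is to run the chain from $L^1$ all the way to $L^\infty$ using \emph{all} $2(k+1)$ factors: take $p_1=1<p_2<\cdots<p_{2k}<d<p_{2(k+1)}$, use \eqref{Z5}/\eqref{trace-S3} for the first step $L^1\to L^{p_2}$, then \eqref{Z4}/\eqref{trace-S2} repeatedly, and finish with \eqref{Z3}/\eqref{trace-S1}. The telescoping exponents sum to $\frac d2-2(k+1)$, and the single remainder factor supplies the $o(1)$ via the gain $\Phi(\delta,x_0)+\delta^{-1}t^{-1/2}$. This $L^1\to L^\infty$ bound, localized by cutoffs $\varphi_{2\delta}(\,\cdot\,)\varphi_\delta$, then yields the pointwise estimate $|\cK_t(x_0,x_0)-\cF_t(x_0,0)|\le C\bigl(\Phi(\delta,x_0)+\delta^{-1}t^{-1/2}\bigr)t^{\frac d2-2(k+1)}$ via \eqref{lem-HS1-st3}. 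A secondary point you do not address: the freezing comparison is only available for $x_0$ at distance $>\delta_0$ from $\Gamma$; the boundary layer $\{x:\dist(x,\Gamma)\le\delta_0\}$ must be handled separately by the crude uniform bounds $|\cK_t(x,x)|+|\cF_t(x,0)|\le Ct^{\frac d2-2(k+1)}$, whose contribution is $O(\delta_0)\cdot t^{\frac d2-2(k+1)}$ and is absorbed by sending $\delta_0\to 0$ after $t\to\infty$.
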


\begin{proof}  
We claim that 
\begin{equation}\label{trace-1-sec1-00}
\int _{\Omega}\tr ~\cK_{t}(x,x)dx=   \int _{\Omega}\tr ~\cF_{t}(x,0)dx +o(t^{\frac{d}{2}-2(k+1)})~~\text{as}~~t\to\infty. 
	\end{equation}
The conclusion then follows from \eqref{Z7}. 
	
The main point of the  proof  is to establish  \eqref{trace-1-sec1-00}.  Let $\varphi$ be a function in $C^\infty (\mathbb{R}^d)$ such that $0\leq \varphi\leq 1$, $\varphi=1$ in $B_{\frac{1}{2}}$ and $\supp \varphi\subset B_{\frac{1}{2}+\frac{1}{100d}}$. Let $\delta_0>0$ and  $x_0\in \Omega$ be such that $\dist(x_0,\partial\Omega)>\delta_0$.
For $\delta\in (0, 10^{-2}\delta_0)$, set $\varphi_{\delta}(x)=\varphi(\delta^{-1}(x-x_0))$ and 
$$
\Phi(\delta,x_0)=\sup_{B_{10\delta}(x_0)}\sum_{j=1}^2 \left(|A_j(x)-A_j(x_0)|+|\Sigma_j(x)-\Sigma_j(x_0)|\right).
$$
The essential ingredient of  the analysis is the following estimate, for $t > \delta^{-4}$: 
\begin{equation}\label{X00}
\| \varphi_{2\delta}\left(T_{\mu_{2(k+1)}}\circ...\circ T_{\mu_{1}}-  \cS_{t, x_0} \right)\varphi_{\delta}
\|_{L^{1}\to L^\infty} \le C_{\delta_0} \left(\Phi(\delta,x_0) +\delta^{-1} t^{-1/2}\right) t^{\frac{d}{2}-2(k+1)}. 
\end{equation}

We first assume \eqref{X00} and continue the proof. We have 
$$
\Big(\varphi_{2\delta}\left(T_{\mu_{2(k+1)}}\circ...\circ T_{\mu_{1}}- \cS_{t, x_0}\right)\varphi_{\delta} \Big) (f) (x) = \varphi_{2\delta}(x) \int_\Omega \Big( \cK_{t}(x,y) - \cF_{t} (x_0, x-y) \Big) \varphi_{\delta}(y)  f(y) dy. 
$$
It follows from  \eqref{X00} that, for $x, y \in \Omega$ and  for $t > \delta^{-4}$, 
\begin{align*}
\Big| \varphi_{2\delta}(x) \varphi_{\delta}(y) \big(  \cK_{t}(x,y)- \cF_{t}(x_0, x-y)\big)\Big|~\leq C_{\delta_0}\left(\Phi(\delta,x_0) +\delta^{-1}t^{-1/2}\right)t^{\frac{d}{2}-2(k+1)}. 
\end{align*}
This implies that, for $t > \delta^{-4}$,  
\begin{equation}\label{trace-sect1-p1}
\left| \tr ~\cK_{t}(x_0,x_0)- \tr~\cF_{t}(x_0,0)\right|\leq C_{\delta_0}\left(\Phi(\delta,x_0) +\delta^{-1}t^{-1/2}\right)t^{\frac{d}{2}-2(k+1)}. 
\end{equation}
Here we used the fact that $\varphi_{2 \delta}(x_0) = \varphi_\delta(x_0) = 1$. Using  \Cref{pro-HS} and \eqref{lem-HS1-st2}-\eqref{lem-HS1-st3}, we have 
\begin{equation}\label{trace-sect1-p2}
|\cK_{t}(x, x)| \le C t^{\frac{d}{2} -2(k+1)} \mbox{ for } x\in \Omega.
\end{equation}
By \eqref{Z7}, we obtain 
\begin{equation}\label{trace-sect1-p3}
|\cF_{t}(x, 0)| \le C t^{\frac{d}{2} -2(k+1)} \mbox{ for } x \in \Omega. 
\end{equation}
Assertion~\eqref{trace-1-sec1-00} now follows from  \eqref{trace-sect1-p1}, \eqref{trace-sect1-p2}, and \eqref{trace-sect1-p3} by noting that $\sup_{x \in \Omega}\Phi(\delta, x) \to 0$ as $\delta \to 0$.

\medskip It remains to prove \eqref{X00}.   We have 
\begin{multline}\label{trace-sect1-a1}
\varphi_{2\delta}\left(T_{\mu_{2(k+1)}}\circ \dots \circ T_{\mu_{1}}-  \cS_{t, x_0} \right)\varphi_{\delta} \\[6pt]
= \sum_{l=1}^{2(k+1)} \varphi_{2\delta}\left(T_{\mu_{2(k+1)}}\circ \dots T_{\mu_{l+1}} \circ (T_{\mu_l} - S_{\mu_l, x_0} )\circ  S_{\mu_{l-1}, x_0} \dots  \circ S_{\mu_{1}, x_0} \right)\varphi_{\delta}. 
\end{multline}
Fix $\beta_0=1<\beta_1<...<\beta_{2k-1}<\beta_{2(k+1)}=2$ with $\beta_{l+1} - \beta_l > 1/ (10d)$. Set 
$$
S_{\mu_l, x_0, 1} = \varphi_{\beta_l\delta} S_{\mu_l, x_0}, \mbox{ and }  \quad S_{\mu_l, x_0, 2} =  (1- \varphi_{\beta_l\delta}) S_{\mu_l, x_0}. 
$$
Then 
$$
\left( S_{\mu_{l-1}, x_0} \circ \dots  \circ S_{\mu_{1}, x_0} \right)\varphi_{\delta} = 
\left( (S_{\mu_{l-1}, x_0,  1} +  S_{\mu_{l-1}, x_0, 2}) \circ  \dots  \circ (S_{\mu_{1}, x_0, 1} + S_{\mu_{1}, x_0, 2} ) \right)\varphi_{\delta}. 
$$
Since $\varphi_{\beta_{l-1}\delta} =\varphi_{\beta_{l}\delta} \varphi_{\beta_{l-1}\delta}$, it follows from  \eqref{trace-sect1-a1} that 
\begin{multline}
\label{trace-sect1-a1'}
\varphi_{2\delta}\left(T_{\mu_{2(k+1)}}\circ \dots \circ T_{\mu_{1}}-  \cS_{t, x_0} \right)\varphi_{\delta} \\[6pt]
= \sum_{l=1}^{2(k+1)} \varphi_{2\delta}\left(T_{\mu_{2(k+1)}}\circ \dots  \circ T_{\mu_{l+1}}\right) \circ\left( (T_{\mu_l} - S_{\mu_l, x_0} )\varphi_{\beta_{l}\delta}\right)\circ  \left(S_{\mu_{l-1}, x_0,1} \circ  \dots  \circ S_{\mu_{1}, x_0,1} \varphi_{\delta}\right) + \\[6pt] \sum_{l=1}^{2(k+1)} \varphi_{2\delta}\left(T_{\mu_{2(k+1)}}\circ \dots \circ  T_{\mu_{l+1}} \right) \circ (T_{\mu_l} - S_{\mu_l, x_0} )\circ \left( S_{\mu_{l-1}, x_0} \circ  \dots  \circ S_{\mu_{1}, x_0}  - S_{\mu_{l-1}, x_0, 1} \circ  \dots  \circ S_{\mu_{1}, x_0, 1} \right)\varphi_{\delta}.
\end{multline}
Let $p_1=1<p_2<...<p_{2k} < d <p_{2(k+1)}< + \infty$  be such that 
$p_{l+1} < p_l d / (d- p_l) $ for $1 \le l \le 2k +1$.  Using the exponential decay property: for $\gamma > 1$, $r>0$, $y \in \mR^d$, and for $f$ with $\supp f \subset B_r$, it holds, for $t > r^{-3/2}$ 
\begin{equation}\label{expoS} \| S_{\mu_l} f \|_{L^\infty (\Omega  \setminus B_{\gamma r} (y) )} \le C_\gamma  e^{- c_\gamma r t } \| f\|_{L^q(B_r(y))},  
\end{equation}
one has for $l=2,..., 2(k+2)+1,$
\begin{equation}\label{trace-sect1-a2}
\| \left( S_{\mu_{l-1}, x_0} \dots  \circ S_{\mu_{1}, x_0} \right)\varphi_{\delta}  - 
\left( S_{\mu_{l-1}, x_0, 1} \dots  \circ S_{\mu_{1}, x_0, 1} \right)\varphi_{\delta} \|_{L^{1} \to L^{p_{l+1} }} \le C e^{- c \delta t}. 
\end{equation}
Combining  \eqref{trace-sect1-a1'}-\eqref{trace-sect1-a2}, and using \eqref{Z3}-\eqref{Z5} for $T_{\mu_l}$, and  \eqref{trace-S1}-\eqref{trace-S3} for $S_{\mu_l}$,  
it suffices to prove that 
\begin{equation}
\label{es-T-S-2-inf}
\|\varphi_{2 \delta}\left(T_{\mu_{2(k+1)}}-S_{\mu_{2(k+1)} ,x_0}\right)\|_{L^{p_{2(k+1)}}\to L^\infty} \leq C_{\delta_0} \Big(\Phi(\delta, x_0) + t^{-1/2} \delta^{-1} \Big) t^{-1+\frac{d}{2p_{2(k+1)}}},
\end{equation}
and  for  $l=1,2,...,2k+1$, 
\begin{align}
\label{es-T-S-1-2}
\|\left(T_{\mu_{l}}-S_{\mu_{l} ,x_0}\right)\varphi_{\beta_{l} \delta}\|_{L^{p_l}\to L^{p_{l+1}}}\leq  C_{\delta_0}\left(\Phi(\delta,x_0) +\delta^{-1}t^{-1/2}\right) t^{-1+\frac{d}{2}\left(\frac{1}{p_l}-\frac{1}{p_{l+1}}\right)}.
\end{align}

\medskip 
\noindent \underline{Step 1:} Proof of \eqref{es-T-S-2-inf}. We will prove the following stronger result, which will be used in the proof of \eqref{es-T-S-1-2}:  for $\lambda\in \cL(\theta, \Lambda_0) $ and $\sup_{n \in\mathbb{Z}}|\theta-n \pi|>\eps_0,$ $\beta\in [1,2]$; and for $1< p < d$,  $p \le q < \frac{pd}{d-p}$ or for $d>p$ and $q= + \infty$: 
\begin{equation}
\label{claimM}
\|\varphi_{\beta \delta} \left(T_{\lambda}- S_{\lambda ,x_0}\right)\|_{L^{p} \to L^q} \leq C_{\delta_0,\eps_0} \Big(\Phi(\delta, x_0) + |\lambda|^{-1/2} \delta^{-1} \Big)  |\lambda|^{-1+ \frac{d}{2} \Big(\frac{1}{p} - \frac{1}{q} \Big) }.
\end{equation}
\\
Denote 
$$
u = T_{\lambda} (f) \quad \quad \mbox{ and } \quad v= S_{\lambda, x_0} f. 
$$
Set 
$$
u_{j,  \delta} = \varphi_{\beta \delta} u_j \quad \mbox{ and } \quad v_{j,  \delta} = \varphi_{\beta \delta} v_j.
$$
Since, in $\Omega$,  
$$
\dive(A_j  \nabla u_{j}) - \lambda \Sigma_j u_{j}  = \Sigma_j f_j,
$$
and 
$$
\dive(A_j(x_0)  \nabla v_{j}) -\lambda \Sigma_j (x_0) v_{j}  = \Sigma_j(x_0) f_j,  
$$
we have, in $\Omega$,  
$$
\dive(A_j (x_0) \nabla u_{j, \delta}) - \lambda \Sigma_j (x_0) u_{j, \delta} = f_{j, \delta} \quad \mbox{ and } \quad  \dive(A_j(x_0) \nabla u_{j, \delta}) - \lambda \Sigma_j (x_0) u_{j, \delta} = g_{j, \delta}, 
$$
where 
$$
f_{j, \delta} = \tf_{j, \delta} + \dive \tF_{j, \delta} \quad \mbox{ and } \quad g_{j, \delta} = \tg_{j, \delta} + \dive \tG_{j, \delta}, 
$$
with 
$$
\tf_{j, \delta} = \varphi_{\beta\delta} \Sigma_j f_j +  A_j \nabla u_j \nabla  \varphi_{\beta\delta}  -\lambda \big(\Sigma_j  (x_0)-  \Sigma_j (x) \big) u_{j, \delta},
$$
$$
\tF_{j, \delta} = u_j A_j \nabla   \varphi_{\beta\delta}  +  \big( A_j (x_0) - A_j(x) \big) \nabla u_{j, \delta}, 
$$
$$
\tg_{j, \delta} = \varphi_{\beta\delta} \Sigma_j (x_0) f_j +  A_j (x_0 ) \nabla v_j \nabla \varphi_{\beta\delta},   \quad \mbox{ and } \quad \tG_{j, \delta} = v_j A_j  (x_0) \nabla \varphi_{\beta\delta}. 
$$
By \Cref{thm2} and \eqref{Z8}, we have  for $1< p < + \infty$, 
\begin{equation}\label{trace-p1}
 \| \nabla u\|_{L^p(\Omega)} + \| \nabla v\|_{L^p(\mathbb{R}^d)} + |\lambda|^{1/2}\left( \| u \|_{L^p(\Omega)}  +\| v \|_{L^p(\mathbb{R}^d)} \right)  \le C|\lambda|^{- \frac{1}{2}} \| f \|_{L^p(\Omega)}. 
\end{equation}
By   \Cref{lem1}, we obtain  for  $1 < p < + \infty$, 
\begin{multline}
|\lambda|^{1/2} \| \nabla (u_{j, \delta} - v_{j, \delta}) \|_{L^p(\mR^d)} +|\lambda|\| u_{j, \delta} - v_{j, \delta} \|_{L^p(\mR^d)} \\[6pt] 
\le C \Big( \|  \tf_{j, \delta} - \tg_{j, \delta}\|_{L^p(\mR^d)} + |\lambda|^{1/2} \| \tF_{j, \delta} - \tG_{j, \delta} \|_{L^p(\mR^d)} \Big). 
\end{multline}
Using \eqref{trace-p1},  we derive,  for $1 < p < + \infty$, that, with $u_{ \delta} =(u_{ 1,\delta},u_{ 2,\delta})$ and $v_{ \delta} =(v_{ 1,\delta},v_{ 2,\delta})$, 
\begin{equation*}
 \| \nabla (u_{ \delta} - v_{\delta}) \|_{L^p(\mR^d)} + |\lambda|^{1/2} \| u_{ \delta} - v_{\delta} \|_{L^p(\mR^d)} \le C_{\delta_0} \Big(\Phi(\delta, x_0) + |\lambda|^{-1/2} \delta^{-1} \Big) |\lambda|^{-1/2}  \| f\|_{L^p(\Omega)}.
\end{equation*}
By Gagliardo-Nirenberg's interpolation inequalities, one gets that for $1<p<d$ and $p\leq q\leq \frac{dp}{d-p}$ \begin{equation*}
\|u_{ \delta} - v_{\delta}\|_{L^{q}(\mathbb{R}^d)}\leq C_{\delta_0} \Big(\Phi(\delta, x_0) + |\lambda|^{-1/2} \delta^{-1} \Big)  |\lambda|^{-1+\frac{d}{2}\left(\frac{1}{p}-\frac{1}{q}\right)}\| f \|_{L^p(\Omega)},
\end{equation*}
and for $p>d$,  \begin{equation*}
\|u_{ \delta} - v_{\delta}\|_{L^{\infty}(\Omega)}\leq C_{\delta_0}\Big(\Phi(\delta, x_0) + |\lambda|^{-1/2} \delta^{-1} \Big)  |\lambda|^{-1+\frac{d}{2p}}\| f \|_{L^p(\Omega)},
\end{equation*}
and assertion \eqref{claimM} follows. 

\medskip 
\noindent \underline{Step 2: } Proof of \eqref{es-T-S-1-2}. By \eqref{adjopT}, 
$$
T_{\lambda}^\star-S_{\lambda,x_0}^\star=\begin{pmatrix}\Sigma_1 & 0\\0 & -\Sigma_2 \end{pmatrix}(T_{\overline{\lambda}}-S_{\overline{\lambda},x_0})\begin{pmatrix}1/\Sigma_1 & 0\\0 & -1/\Sigma_2 \end{pmatrix},$$
it follows that  by \eqref{claimM},  
 for $\overline{\lambda}\in \cL(\theta, \Lambda_0) $ with $\sup_{n \in\mathbb{Z}}|\theta- n \pi|>\eps_0,$ $\beta\in [1,2]$;  and for $1< p < d$,  $p \le q < \frac{pd}{d-p}$ or for $d>p$ and $q= + \infty$:
$$
\|\varphi_{\beta \delta}\left(T_{\lambda}^* - S_{\lambda,x_0}^*\right)\|_{L^{p} \to L^q}\leq C_{\delta_0,\eps_0} \Big(\Phi(\delta, x_0) + |\lambda|^{-1/2} \delta^{-1} \Big)  |\lambda|^{-1+ \frac{d}{2} \Big(\frac{1}{p} - \frac{1}{q} \Big) },
$$
which implies
\begin{align}
\|\left(T_{\lambda} - S_{\lambda,x_0}\right)\varphi_{\beta \delta}\|_{L^{\frac{q}{q-1}} \to L^{\frac{p}{p-1}}}\leq C_{\delta_0,\eps_0} \Big(\Phi(\delta, x_0) + |\lambda|^{-1/2} \delta^{-1} \Big)  |\lambda|^{-1+ \frac{d}{2} \Big(\frac{q-1}{q} - \frac{p-1}{p} \Big) }.
\end{align}
This gives \eqref{es-T-S-1-2}. The proof is complete. 
\end{proof}

As a consequence of  \Cref{pro-trace0} and  \Cref{pro-trace}, we obtain 

\begin{corollary} \label{cor-E} We have
\begin{align*}
\sum_{j} \frac{1}{|\tlambda_j|^{2(k+1)}- i t^{2(k+1)}}= {\bf \hat c} t^{\frac{d}{2}-2(k+1)} +o(t^{\frac{d}{2}-2(k+1)})~~\text{as}~~t\to\infty, 
\end{align*}
where  each characteristic value $\tlambda_j$ of $T_{\lambda_0}$ is repeated  a number of times equal to its multiplicity, and  ${\bf \hat c}$ is defined by \eqref{def-hc}. 
\end{corollary}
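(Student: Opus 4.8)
The plan is to deduce \Cref{cor-E} by combining \Cref{pro-trace0} with \Cref{pro-trace}, the only genuinely non-routine point being the replacement of $\tlambda_j^{2(k+1)}$ by $|\tlambda_j|^{2(k+1)}$ in the denominators. First I would note that, by \eqref{traceT^2} applied to $T_{\mu_{2(k+1)}}\circ\cdots\circ T_{\mu_1}$, whose kernel is $\cK_t$, one has $\tr(T_{\mu_{2(k+1)}}\circ\cdots\circ T_{\mu_1})=\int_\Omega\tr\,\cK_t(x,x)\,dx$; hence \Cref{pro-trace0} and \Cref{pro-trace} together give $\sum_j\frac{1}{\tlambda_j^{2(k+1)}-it^{2(k+1)}}=\mathbf{\hat c}\,t^{\frac d2-2(k+1)}+o(t^{\frac d2-2(k+1)})$ as $t\to\infty$, with $\mathbf{\hat c}$ as in \eqref{def-hc}. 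In particular this quantity, and its real part, are $O(t^{\frac d2-2(k+1)})$.

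The key step is a localization of the characteristic values near the real axis: for every $\eps>0$ there is $R>0$ such that every characteristic value $\tlambda$ of $T_{\lambda_0}$ with $|\tlambda|\ge R$ satisfies $\inf_{n\in\mZ}|\arg\tlambda-n\pi|\le\eps$. Indeed, fixing a small $\eps_1>0$, \Cref{thm2} (with $\eps_0=\eps_1$) produces $\Lambda_1$ such that $T_\lambda$ exists for all $\lambda$ with $|\lambda|\ge\Lambda_1$ and $\inf_n|\arg\lambda-n\pi|\ge\eps_1$; by \eqref{prepare2} the operator $I-(\lambda-\lambda_0)T_{\lambda_0}$ is then invertible, so $\lambda-\lambda_0$ is not a characteristic value of $T_{\lambda_0}$. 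Since $\lambda_0$ is a fixed number, this forces any characteristic value of large modulus to have argument within $\eps_1$ (up to a vanishing correction coming from the shift by $\lambda_0$) of $\pi\mZ$. Writing $\eps(R)=\sup\{\inf_{n\in\mZ}|\arg\tlambda_j-n\pi|\colon |\tlambda_j|\ge R\}$, we thus have $\eps(R)\to0$ as $R\to\infty$ (recall $T_{\lambda_0}$ is compact, so its characteristic values form a sequence tending to $\infty$); and because $2(k+1)$ is even, so that $2(k+1)n\pi\equiv0\pmod{2\pi}$, every characteristic value with $|\tlambda_j|\ge R$ obeys $\tlambda_j^{2(k+1)}=|\tlambda_j|^{2(k+1)}e^{i\psi_j}$ with $|\psi_j|\le 2(k+1)\eps(R)$.

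It then remains to bound $D(t):=\sum_j\big(\frac{1}{|\tlambda_j|^{2(k+1)}-it^{2(k+1)}}-\frac{1}{\tlambda_j^{2(k+1)}-it^{2(k+1)}}\big)$. Set $b_j=|\tlambda_j|^{2(k+1)}$, $a_j=\tlambda_j^{2(k+1)}$, $s=t^{2(k+1)}$. For the finitely many indices with $|\tlambda_j|<R$ the numbers $a_j,b_j$ are bounded while $s\to\infty$, so each term is $O_R(s^{-2})$ and their sum is $O_R(t^{-4(k+1)})$. Fix $R$ large enough that $2(k+1)\eps(R)\le\tfrac14$; then for $|\tlambda_j|\ge R$ one has $\Re a_j\ge\tfrac12 b_j$, $|b_j-is|\ge\max(b_j,s)$, $|a_j-is|\ge\tfrac{\sqrt3}{2}\max(b_j,s)$ (expanding $|b_je^{i\psi_j}-is|^2=b_j^2-2b_js\sin\psi_j+s^2$ and using $|\sin\psi_j|\le|\psi_j|$ together with $b_js\le\tfrac12(b_j^2+s^2)$) and $|a_j-b_j|=b_j|e^{i\psi_j}-1|\le 2(k+1)\eps(R)\,b_j$, whence
\begin{equation*}
\Big|\frac{1}{b_j-is}-\frac{1}{a_j-is}\Big|\le C\eps(R)\frac{b_j}{\max(b_j,s)^2}\le C'\eps(R)\,\Re\frac{1}{a_j-is},
\end{equation*}
the last inequality since $\Re\frac{1}{a_j-is}=\frac{\Re a_j}{|a_j-is|^2}\ge\frac{b_j/2}{(b_j+s)^2}\gtrsim\frac{b_j}{\max(b_j,s)^2}$. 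Summing over $|\tlambda_j|\ge R$, and using that $\sum_j\Re\frac{1}{a_j-is}=O(t^{\frac d2-2(k+1)})$ from the first paragraph while the $|\tlambda_j|<R$ contribution to $\sum_j\Re\frac{1}{a_j-is}$ is again $O_R(t^{-4(k+1)})$ (each such term being $\Re a_j/|a_j-is|^2$ with $|a_j|$ bounded and $|a_j-is|\ge s/2$ for $t$ large), we get $\sum_{|\tlambda_j|\ge R}\big|\frac{1}{b_j-is}-\frac{1}{a_j-is}\big|\le C''\eps(R)\,t^{\frac d2-2(k+1)}$ for $t$ large. Hence $|D(t)|\le O_R(t^{-4(k+1)})+C''\eps(R)\,t^{\frac d2-2(k+1)}$; dividing by $t^{\frac d2-2(k+1)}$, letting $t\to\infty$ and then $R\to\infty$ yields $D(t)=o(t^{\frac d2-2(k+1)})$. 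Adding $D(t)$ to the identity of the first paragraph proves \Cref{cor-E}.

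I expect the main obstacle to be the localization step of the second paragraph: establishing that the transmission eigenvalues of large modulus — equivalently, the characteristic values of $T_{\lambda_0}$ — cluster around the real axis. This is exactly what keeps the denominators $\tlambda_j^{2(k+1)}$ away from the purely imaginary number $it^{2(k+1)}$, and it explains the choice of the even power $2(k+1)$: it maps both asymptotic rays $\arg\approx0$ and $\arg\approx\pi$ onto $\arg\approx0$, turning $\tlambda_j^{2(k+1)}$ into an essentially positive real number. Once this is available, the remaining work is routine bookkeeping with the elementary resolvent-type bound $|z-is|\gtrsim\max(|z|,s)$ valid for $z$ near the positive reals, combined with the already-established $O(t^{\frac d2-2(k+1)})$ control on $\Re\sum_j\frac{1}{\tlambda_j^{2(k+1)}-it^{2(k+1)}}$, which supplies the needed a priori summability of the tail.
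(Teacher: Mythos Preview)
Your proof is correct and follows the same overall architecture as the paper's: first combine \Cref{pro-trace0}, \Cref{pro-trace}, and \eqref{traceT^2} to obtain the asymptotic with $\tlambda_j^{2(k+1)}$ in the denominator, then use the fact (from \Cref{thm2}) that characteristic values of large modulus have argument clustering near $\pi\mZ$ to replace $\tlambda_j^{2(k+1)}$ by $|\tlambda_j|^{2(k+1)}$ at cost $o(t^{\frac d2-2(k+1)})$.

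The genuine difference is in how the tail is summed. The paper bounds the difference for $j>j_0$ by
\[
\Big(\sup_{j\ge j_0}\frac{|\Im\tlambda_j|}{|\tlambda_j|}\Big)\sum_{j>j_0}\frac{1}{(|\tlambda_j|+t)^{2(k+1)}},
\]
and controls the latter sum by a second appeal to the Hilbert--Schmidt machinery: Agmon's inequality $\sum_j|s_j|^{-2}\le\vvvert\mathbf T\vvvert^2$ applied to $T_{-it}^{k+1}$, together with the norm bound from \Cref{pro-HS}. You instead bound each tail term by $C'\eps(R)\,\Re\frac{1}{a_j-is}$ and observe that $\sum_j\Re\frac{1}{a_j-is}$ is already known to be $O(t^{\frac d2-2(k+1)})$ from the first paragraph. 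Your route is more self-contained---it recycles the asymptotic just established rather than invoking the Hilbert--Schmidt norm a second time---while the paper's route is slightly more direct in that it gives an unconditional bound on $\sum_j(|\tlambda_j|+t)^{-2(k+1)}$ independent of the trace computation. Both are short, and the localization step you flagged as the main obstacle is handled identically in the two proofs (the paper phrases it as $\sup_{j\ge j_0}|\Im\tlambda_j|/|\tlambda_j|\to0$, which is equivalent to your $\eps(R)\to0$).
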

\begin{proof} By \Cref{pro-trace0,pro-trace}, and   \eqref{traceT^2} in  \Cref{lem-T1T2},  we have
\begin{align*}
\sum_{j} \frac{1}{\tlambda_j^{2(k+1)}- i t^{2(k+1)}}= {\bf \hat c} t^{\frac{d}{2}-2(k+1)} +o(t^{\frac{d}{2}-2(k+1)})~~\text{as}~~t\to\infty.
\end{align*}
For  $j_0 \in \mN$ large and for  $t\geq 2|\tlambda_{j_0}|$, we have 
\begin{multline}\label{coco1}
\left|\sum_{j=1}^{\infty}\frac{1}{\tlambda_j^{2(k+1)}-t^{2(k+1)}i}-\sum_{j=1}^{\infty}\frac{1}{|\tlambda_j|^{2(k+1)}-t^{2(k+1)}i}\right| \\[6pt]
\leq \sum_{j=1}^{j_0}\frac{1}{|\tlambda_j^{2(k+1)}-t^{2(k+1)}i|}+\frac{1}{\big| |\tlambda_j|^{2(k+1)}-t^{2(k+1)}i \big|} \\[6pt]
+\sum_{j=j_0+1}^{\infty}\frac{C |\tlambda_j|^{2k +1} |\Im \tlambda_j|}{|\tlambda_j^{2(k+1)}-t^{2(k+1)}i| \, \big||\tlambda_j|^{2(k+1)}-t^{2(k+1)}i \big|}\\[6pt]
 \leq 2j_0 t^{-2(k+1)}+\left( \sup_{j \ge j_0} \frac{|\Im \tlambda_j|}{|\tlambda_j|} \right) \sum_{j=j_0+1}^{\infty} \frac{1}{(|\tlambda_j|+t)^{2(k+1)}}. 
\end{multline}
By  \cite[Theorem 12.14]{Agmon} and \Cref{thm2}, we have
\begin{equation}\label{coco2}
\sum_{j \ge j_0}^\infty (|\tlambda_j| + |t|)^{-2(k+1)}  \le C \vvvert T_{-it}^{k+1} \vvvert^2,
\end{equation}
and  by \Cref{pro-HS}, we obtain 
\begin{equation}\label{coco3}
\vvvert T_{- i t} \vvvert^2 \le C t^{\frac{d}{2} - 2(k+1)}. 
\end{equation}
Since, by \Cref{thm2},  
$$
\left( \sup_{j \ge j_0} \frac{|\Im \tlambda_j|}{|\tlambda_j|} \right) \to 0 \mbox{ as } j_0 \to + \infty, 
$$
it follows from \eqref{coco1}, \eqref{coco2}, and \eqref{coco3} that 
\begin{equation*}
\left|\sum_{j=1}^{\infty}\frac{1}{\tlambda_j^{2(k+1)}-t^{2(k+1)}i}-\sum_{j=1}^{\infty}\frac{1}{|\tlambda_j|^{2(k+1)}-t^{2(k+1)}i}\right|=  \circ(1) t^{\frac{d}{2}-2(k+1)}~~\text{ as}~t\to\infty.
\end{equation*}
The proof is complete. 
\end{proof}

\subsection{Proof of \Cref{thm1}} 

Before giving the proof of \Cref{thm1}, we  recall  a Tauberian theorem of Hardy and Littlewood, see e.g.  \cite[Theorem 2a]{Widder41} or \cite[Theorem 14.5]{Agmon}. 
\begin{lemma}\label{tauberian-thm}
Let $\sigma(s)$ be a non-decreasing function for $s>0$, let $a\in (0,1)$ and  $P\geq 0$. Then,  as $t\to \infty$, 
$$
\int_{0}^{\infty}\frac{d\sigma(s)}{s+t}=Pt^{a-1}+\circ (t^{a-1}), 
$$
if and only, as $s\to\infty$, 
$$
\sigma(s)=\frac{P}{a\int_{0}^{\infty}t^{a-1}(1+t)^{-1}dt}s^a+\circ (s^a).$$
\end{lemma}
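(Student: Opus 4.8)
The plan is to prove the two implications separately, treating the elementary (Abelian) direction first and reserving the monotonicity of $\sigma$ for the substantive (Tauberian) direction. Throughout I write $K := \dfrac{P}{a\int_0^\infty t^{a-1}(1+t)^{-1}\,dt}$, and I recall the Beta-function evaluation $\int_0^\infty t^{a-1}(1+t)^{-1}\,dt = \pi/\sin(\pi a)$, so that $K = P\sin(\pi a)/(\pi a)$; the target asymptotic then reads $\sigma(s) = Ks^a + o(s^a)$.

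For the \emph{Abelian} direction (assuming the asymptotic on $\sigma$), I would first note that $\sigma(s)=Ks^a+o(s^a)$ forces $\sigma(s)=\mathcal O(s^a)$ with $a<1$, so the boundary terms in the integration by parts $\int_0^\infty \frac{d\sigma(s)}{s+t} = \int_0^\infty \frac{\sigma(s)}{(s+t)^2}\,ds$ vanish. Substituting $s=tu$ gives $\int_0^\infty \frac{\sigma(tu)}{t(1+u)^2}\,du$, and splitting into $u\le R$ and $u>R$ and invoking $\sigma(tu)/(tu)^a\to K$ together with the uniform bound $\sigma(tu)\le C(tu)^a$ on the tail (where $\int^\infty u^{a-2}\,du<\infty$ since $a<1$) lets me pass to the limit by dominated convergence. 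The limiting integral is $Kt^{a-1}\int_0^\infty \frac{u^a}{(1+u)^2}\,du = Kt^{a-1}\,a\,\pi/\sin(\pi a) = Pt^{a-1}$, which is exactly the claimed transform asymptotic.

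The \emph{Tauberian} direction is where the work lies. First I would extract an a priori bound from monotonicity: taking $t=s$ and discarding the part of the integral over $(s,\infty)$,
$$\int_0^\infty \frac{d\sigma(r)}{r+s}\ \ge\ \frac{1}{2s}\big(\sigma(s^-)-\sigma(0)\big),$$
so the hypothesis $\int_0^\infty (r+s)^{-1}\,d\sigma(r)\sim Ps^{a-1}$ yields $\sigma(s)=\mathcal O(s^a)$. Next I would rescale: setting $\tau_t(x):=\sigma(tx)/t^a$, a change of variable $s=tx$ turns the hypothesis into
$$\int_0^\infty (x+\lambda)^{-1}\,d\tau_t(x)=t^{1-a}\int_0^\infty (r+\lambda t)^{-1}\,d\sigma(r)\to P\lambda^{a-1}$$
for each fixed $\lambda>0$. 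The a priori bound gives $\tau_t(x)\le Cx^a$ locally uniformly, so by Helly's selection theorem the measures $d\tau_t$ are vaguely precompact; along any subsequential vague limit $d\tau$ one obtains $\int_0^\infty (x+\lambda)^{-1}\,d\tau(x)=P\lambda^{a-1}$ for all $\lambda>0$. By injectivity of the Stieltjes transform this forces $d\tau(x)=Ka\,x^{a-1}\,dx$, i.e. $\tau(x)=Kx^a$, independently of the subsequence; hence $\sigma(tx)/t^a\to Kx^a$, and $x=1$ gives the conclusion.

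The main obstacle is the passage to the subsequential limit, because the resolvent kernel $(x+\lambda)^{-1}$ is neither compactly supported nor vanishing at $x=0$, so vague convergence of $d\tau_t$ does not immediately transfer to convergence of the transforms. I would control this by splitting at large $x=M$ and integrating by parts on the tail: the uniform bound $\tau_t(x)\le Cx^a$ with $a<1$ makes $\int_M^\infty (x+\lambda)^{-2}\tau_t(x)\,dx$ small uniformly in $t$ (here $\int^\infty x^{a-2}\,dx$ converges precisely because $a<1$), while near $x=0$ the kernel is bounded by $1/\lambda$ and the mass $\tau_t$ is small. This is exactly where the hypotheses $a\in(0,1)$ and the monotonicity of $\sigma$ are indispensable.
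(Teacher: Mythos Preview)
The paper does not actually prove this lemma: it is quoted as a classical Tauberian theorem of Hardy and Littlewood, with references to Widder and Agmon, and is used as a black box in the proof of Theorem~\ref{thm1}. So there is no in-paper argument to compare against.

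Your sketch is essentially correct and gives a self-contained proof. The Abelian direction is routine, as you indicate (the boundary term at $s=0$ contributes $\sigma(0^+)/t=o(t^{a-1})$ since $a>0$, and dominated convergence on the tail is justified by $\sigma(s)=O(s^a)$ with $a<1$). For the Tauberian direction your compactness-plus-uniqueness strategy is sound: the a~priori bound $\sigma(s)=O(s^a)$ from monotonicity gives local uniform boundedness of the rescaled family $\tau_t(x)=\sigma(tx)/t^a$, Helly's selection yields subsequential limits, your tail and near-origin estimates (both resting on $a\in(0,1)$) legitimately push the Stieltjes transform through the vague limit, and injectivity of the Stieltjes transform on positive measures on $[0,\infty)$ (which follows by analytic continuation from $(0,\infty)$ to $\mathbb{C}\setminus(-\infty,0]$ and the Stieltjes inversion formula) pins down the limit measure uniquely, upgrading subsequential to full convergence. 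The textbook proofs in the cited references proceed somewhat differently, in the spirit of Karamata's method (approximating an indicator by combinations of resolvent kernels), but your route is a standard and equally valid alternative.
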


We are ready to give 
\begin{proof}[Proof of \Cref{thm1}]
We have, by  \Cref{cor-E}, 
\begin{align*}
\sum_{j=1}^{\infty}\frac{1}{|\tlambda_j|^{2(k+1)}-t^{2(k+1)}i}= {\bf \hat c} t^{\frac{d}{2}-2(k+1)} +o(t^{\frac{d}{2}-2(k+1)})~~\text{as}~~t\to\infty, 
\end{align*}
where  ${\bf \hat c}$ is given by \eqref{def-hc}:  
\begin{align} 
{\bf \hat c} =\frac{1}{(2\pi)^d}\int_\Omega  
\sum_{j=1,2}\int_{\mathbb{R}^d}\frac{1}{(\Sigma_j(x)^{-1}\langle  A_j(x) \xi, \xi \rangle)^{2(k+1)}- i}d\xi dx.
\end{align}
Considering the imaginary part yields, 
\begin{align*}
\sum_{j=1}^{\infty}\frac{t^{2(k+1)}}{|\tlambda_j|^{4(k+1)}+t^{4(k+1)}}= {\bf \hat c}_1  t^{\frac{d}{2}-2(k+1)} +o(t^{\frac{d}{2}-2(k+1)})~~\text{as}~~t\to+\infty, 
\end{align*}
where 
$$
{\bf \hat c}_1 = \Im ({\bf \hat c}) =  \frac{1}{(2\pi)^d}\int_\Omega  
\sum_{j=1,2}\int_{\mathbb{R}^d}\frac{1}{(\Sigma_j(x)^{-1}\langle  A_j(x) \xi, \xi \rangle)^{4(k+1)}+1}d\xi dx. 
$$
This implies, by replacing $t^{4(k+1)}$ by $t$, 
\begin{align*}
\sum_{j=1}^{\infty}\frac{1}{|\tlambda_j|^{4(k+1)}+t}= {\bf \hat c}_1  t^{\frac{d}{8(k+1)}-1} +o(t^{\frac{d}{8(k+1)}-1})~~\text{as}~~t\to+\infty.
\end{align*}
Since $\tlambda_j=\lambda_j-\lambda_0$, one obtains 
\begin{align*}
\sum_{j=1}^{\infty}\frac{1}{|\lambda_j|^{4(k+1)}+t}= {\bf \hat c}_1  t^{\frac{d}{8(k+1)}-1} +o(t^{\frac{d}{8(k+1)}-1})~~\text{as}~~t\to+\infty.
\end{align*}
We can write this identity under the form 
\begin{align*}
\int_{0}^{\infty}\frac{dN(s^{\frac{1}{4(k+1)}})}{s+t} = {\bf \hat c}_1  t^{\frac{d}{8(k+1)}-1} +o(t^{\frac{d}{8(k+1)}-1})~~\text{as}~~t\to\infty.
\end{align*}
By Lemma \ref{tauberian-thm}, one has
\begin{align}
N(t)=\mathbf{c}  t^{\frac{d}{2}} +\circ(t^{\frac{d}{2}}),
\end{align}
where 
$$
\mathbf{c}= \frac{{\bf \hat c}_1}{\frac{d}{8(k+1)}\int_{0}^{\infty}t^{\frac{d}{8(k+1)}-1}(1+t)^{-1}dt}.$$
We have, by Fubini's theorem, 
\begin{align*}
\int_{\mathbb{R}^d}\frac{1}{(\Sigma_j(x)^{-1}\langle  A_j(x) \xi, \xi \rangle)^{4(k+1)}+1}d\xi=
\int_{0}^{\infty} \Big|\Big\{\xi:(\Sigma_j(x)^{-1}\langle  A_j(x) \xi, \xi \rangle)^{4(k+1)}<t \Big\} \Big|  \frac{dt}{(t+1)^2}.
\end{align*}
Since 
$$\Big| \Big\{\xi:(\Sigma_j(x)^{-1}\langle  A_j(x) \xi, \xi \rangle)^{4(k+1)}<t \Big\} \Big|=t^{\frac{d}{8(k+1)}} \Big| \Big\{\xi:\langle  A_j(x) \xi, \xi \rangle< \Sigma_j(x) \Big\} \Big|, $$
it follows that 
\begin{align*}
\int_{\mathbb{R}^d}\frac{1}{(\Sigma_j(x)^{-1}\langle  A_j(x) \xi, \xi \rangle)^{4(k+1)}+1}d\xi &= \Big| \Big\{\xi:\langle  A_j(x) \xi, \xi \rangle< \Sigma_j(x) \Big\} \Big|
\int_{0}^{\infty} \frac{t^{\frac{d}{8(k+1)}} dt}{(t+1)^2}
\\&= \Big| \Big\{\xi:\langle  A_j(x) \xi, \xi \rangle< \Sigma_j(x) \Big\} \Big|
\frac{d}{8(k+1)}\int_{0}^{\infty}t^{\frac{d}{8(k+1)}-1}(1+t)^{-1}dt. 
\end{align*}
Here in the last identity, an integration by parts is used. We therefore have 
$$
\mathbf{c}=\frac{1}{(2\pi)^d}  \sum_{j=1,2}  \int_\Omega \Big|\Big\{\xi:\langle  A_j(x) \xi, \xi \rangle< \Sigma_j(x)\Big\} \Big| dx.
$$
The proof is complete. 
\end{proof}

\section{Completeness of generalized eigenfunctions of the transmission problem - Proof of \Cref{thm1-C}} \label{sect-C}

Fix $\eps_0>0$. For $z \in \cL (\theta, 1)$ with $\inf_{n \in \mZ}|\theta - n \pi| > \eps_0$ and $|z|$ large enough, let $\tau_1, \cdots, \tau_{k+1}$ with $k = k_d = [d/2]$ be the $k+1$ distinct roots in $\mC$ of the equation $x^{k+1} = z$. Set 
$$
\eta_l = \lambda_0 + \tau_l \mbox{ for } 1 \le l \le k+1.  
$$
As in the proof of \eqref{TTT}, one has 
$$
T_{\eta_{k+1}} \circ \dots \circ T_{\eta_1} = T_{\lambda_0}^{k+1} \Big(I - z T_{\lambda_0}^{k+1} \Big)^{-1}. 
$$
It follows that 
$$
T_{\eta_{k+1}} \circ \dots \circ T_{\eta_1} = \Big(T_{\lambda_0}^{k+1}\Big)_{z}. 
$$
Since $T_{\lambda_0}^{k+1}$ is a Hilbert-Schmidt operator, it follows from  \cite[Theorem 16.4]{Agmon} that: 

1) the space spanned by the general eigenfunctions of 
$T_{\lambda_0}^{k+1}$ is equal to $\overline{{\bf R}(T_{\lambda_0}^{k+1})}$,  the closure of the range of $T_{\lambda_0}^{k+1}$ with respect to the $L^2$-topology. 

\noindent On the other hand, we have 

2) the range ${\bf R}(T_{\lambda_0}^{k+1})$ of  $T_{\lambda_0}^{k+1}$ is dense in $[L^2(\Omega)]^2$, since ${\bf R}(T_{\lambda_0})$ is dense in $[L^2(\Omega)]^2$ and $T_{\lambda_0}$ is continuous,

3) the space spanned by the general eigenfunctions of 
$T_{\lambda_0}^{k+1}$ is equal to the space spanned by the general eigenfunctions of  $T_{\lambda_0}$. \\

The conclusion now follows from 1), 2), and 3). \qed

\providecommand{\bysame}{\leavevmode\hbox to3em{\hrulefill}\thinspace}
\providecommand{\MR}{\relax\ifhmode\unskip\space\fi MR }
\providecommand{\MRhref}[2]{%
  \href{http://www.ams.org/mathscinet-getitem?mr=#1}{#2}
}
\providecommand{\href}[2]{#2}

\end{document}